\renewcommand\subsection{
  \renewcommand{\sfdefault}{pag}
  \@startsection{subsection}%
  {2}{0pt}{-\baselineskip}{.2\baselineskip}{\raggedright
    \sffamily\itshape\small
  }}
\renewcommand\section{
  \renewcommand{\sfdefault}{phv}
  \@startsection{section} %
  {1}{0pt}{\baselineskip}{.2\baselineskip}{\centering
    \sffamily
    \scshape
}}
\theoremstyle{plain}
\newtheorem{Theorem}{Theorem}
\newtheorem{Corollary}[Theorem]{Corollary}
\newtheorem{theorem}{Theorem}[section]
\newtheorem{corollary}[theorem]{Corollary}
\newtheorem{lemma}[theorem]{Lemma}
\newtheorem{claim}[theorem]{Claim}
\newtheorem{proposition}[theorem]{Proposition}
\theoremstyle{definition}
\newtheorem{defn}[theorem]{Definition}
\newtheorem{definition-lemma}[theorem]{Definition-Lemma}
\theoremstyle{remark}
\newtheorem{remark}[theorem]{Remark}
\def\ol#1{\overline{#1}}
\def\ideal#1.{I_{#1}}
\def\ring#1.{\mathscr {O}_{#1}}
\def\fring#1.{\widehat{\mathscr {O}}_{#1}}
\def\proj#1.{\mathbb P(#1)}
\def\pr #1.{\mathbb P^{#1}}
\def\af #1.{\mathbb A^{#1}}
\def\Hz #1.{\mathbb F_{#1}}
\def\Hbz #1.{\overline{\mathbb F}_{#1}}
\def\pic#1.{\operatorname {Pic}\,(#1)}
\def\pico#1.{\operatorname{Pic}^0(#1)}
\def\picg#1.{\operatorname {Pic}^G(#1)}
\def\ner#1.{NS (#1)}
\def\rdown#1.{\llcorner#1\lrcorner}
\def\rup#1.{\ulcorner#1\urcorner}
\def\cone#1.{\operatorname {NE}(#1)}
\def\ccone#1.{\overline{\operatorname {NE}}(#1)}
\def\coef#1.{\frac{(#1-1)}{#1}}
\def\vit#1.{D_{\langle #1 \rangle}}
\def\mm#1.{\overline {M}_{0,#1}}
\def\H1#1.{H^1(#1,{\ring #1.})}
\def\ac#1.{\overline {\mathbb F}_{#1}}
\def\adj#1.{\frac {#1-1}{#1}}
\def\spn#1.{\overline{#1}}
\def\ses#1.#2.#3.{0\to #1\to #2\to #3 \to 0}
\def\pek#1.#2.{\Cal P^{#1}(#2)}
\def\plk#1.#2.{\Cal P^{\leq #1}(#2)}
\def\ev#1.{\operatorname{ev_{#1}}}
\def\bminv#1.{(\nu_1,s_1;\nu_2,s_2;\dots ;\nu_{#1},s_{#1};\nu_{r+1})}
\def\zinv#1.{(\nu_1,s_1;\nu_2,s_2;\dots ;\nu_{#1},s_{#1};0)}
\def\iinv#1.{(\nu_1,s_1;\nu_2,s_2;\dots ;\nu_{#1},s_{#1};\infty)}
\def\map#1.#2.{#1 \longrightarrow #2}
\def\rmap#1.#2.{#1 \dasharrow #2}
\def\emb#1.#2.{#1 \hookrightarrow #2}
\def\dim{\operatorname{dim}}
\def\deg{\operatorname{deg}}
\def\Spec{\operatorname{Spec}}
\def\chara{\operatorname{char}}
\def\mult{\operatorname{mult}}
\def\vol{\operatorname{vol}}
\def\N{\mathbb N}
\def\p{\mathbb P}
\def\e{\Cal E}
\def\e1{E_1}
\def\e2{E_2}
\def\OO{\mathscr O}
\def\Z{\mathbb Z}
\def\P{\mathbb P}
\def\CC{\mathscr C}
\newcommand\LL{\mathscr{L}}
\def\UU{\mathscr U}
\def\FF{\mathscr F}
\def\GG{\mathscr G}
\newcommand\Q{{\mathbb{Q}}}
\newcommand\R{{\mathbb{R}}}
\newcommand\slc{slc\xspace}
\newcommand\const{\upsilon}
\DeclareMathOperator{\coeff}{coeff}
\renewcommand\epsilon{\varepsilon}
\newcommand\wt{\widetilde}
\newcommand\wh{\widehat}
\newcommand\fra{\mathfrak a}
\newcommand{\sfint}{\mathsf{int}}
\title{On the boundedness of slc surfaces of general type}
\author{Christopher D. Hacon} 
\address{%
CDH:\newline\indent Department of Mathematics \\  
University of Utah\\  
Salt Lake City, UT 84112, USA}
\email{hacon@math.utah.edu}
\author{S\'andor J Kov\'acs}
\address{%
SJK:\newline\indent
University of Washington, Department of Mathematics, Seattle, WA
  98195-4350, USA} 
\email{skovacs@uw.edu}
\thanks{CDH was supported by NSF research grants no: DMS-1300750, DMS-1265285 and by
  a grant from the Simons Foundation; Award Number: 256202.}
\thanks{SJK was supported in part by NSF Grants DMS-1301888 and  DMS-1565352 and the
  Craig McKibben and Sarah Merner Endowed Professorship in Mathematics. }
\newcommand{\widepage}{
\textwidth= 6.5in
\textheight=8.75in
\voffset-.5in
\hoffset-.75in
\marginparwidth=56pt
}
\begin{document}
\maketitle

The purpose of this note is to give a new proof of Alexeev's boundedness result for
stable surfaces which is independent of the base field and to highlight some
important consequences of this result.

Let $k$ be an algebraically closed field, an \emph{\slc model} $(X,B)$ is a
projective semi-log canonical pair
such that $K_X+B$ is ample (see Definition \ref{d-slc} below).  The main result of
this paper is the following.

\begin{Theorem}[Alexeev]\label{t-m}
  Fix a constant $\const>0$ and a DCC set $\CC \subset [0,1]\cap \Q$.  Then there
  exists an integer $r>0$ such that for any algebraically closed field $k$ and any
  two dimensional \slc model $(X,B)$ defined over $k$ with $\coeff(B)\subseteq \CC$
  and $(K_X+B)^2=\const$, $r(K_X+B)$ is very ample.
\end{Theorem}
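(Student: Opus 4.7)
The plan is to reduce the problem to the normal log canonical case via normalization, prove uniform very ampleness for normal lc surface pairs in a characteristic-free manner, and then descend the result to the slc pair using Koll\'ar's gluing for semi-normal varieties.

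First, let $\pi \colon \bar X \to X$ be the normalization with conductor divisor $D \subset \bar X$, and set $\bar B = \pi_*^{-1} B$. Then $(\bar X, \bar B + D)$ is log canonical, $\pi^*(K_X+B) = K_{\bar X} + \bar B + D$ is ample, the coefficients lie in the still-DCC set $\CC \cup \{1\}$, and the volumes of the connected components of $\bar X$ sum to $\const$. Thus if one can produce a uniform $r_0$ such that $r_0(K_{\bar X} + \bar B + D)$ is very ample on $\bar X$, then Koll\'ar's descent applies: the pair $(X,B)$ is recovered from $(\bar X,\bar B+D)$ together with the involution on the normalization of $D$ that glues the two branches, and this involution lives in a bounded family once the lc data are bounded. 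A controlled multiple $r$ of $r_0$ then makes $r(K_X+B)$ very ample on $X$.

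The heart of the argument is therefore uniform very ampleness for projective normal log canonical surface pairs of general type $(Y,\Gamma)$ with $\coeff(\Gamma)\subset \CC\cup\{1\}$ and $\vol(K_Y+\Gamma)\le \const$. A DCC/lower-bound result for volumes of such pairs (available in dimension two in all characteristics) bounds the number of connected components of $\bar X$ from above. On each component one runs the two-dimensional MMP---available in arbitrary characteristic by Koll\'ar and Tanaka---to reduce to the ample model; the ACC for log canonical thresholds in dimension two restricts the coefficients of $\Gamma$ to a finite set depending only on $\CC$; and the classification of log canonical surface singularities bounds the Cartier index $N$ of $K_Y+\Gamma$. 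Combining this with a Matsusaka-type effective very ampleness theorem for polarized lc surfaces, one obtains the uniform $r_0$.

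The principal obstacle is the final effective very ampleness step in arbitrary characteristic: Kawamata--Viehweg vanishing is unavailable in positive characteristic, so one cannot directly invoke Koll\'ar's effective base-point-free theorem. Instead one must rely on characteristic-free substitutes---Keel's base-point-free theorem for surfaces, Reider-type arguments, and direct MMP/Noether-inequality bounds---to extract a uniform $r_0$ depending only on $\const$ and $\CC$. Descending this control to $(X,B)$ via gluing, which requires that the conductor involution be controlled uniformly, is the other delicate point, but it is essentially formal once the normalized pair is bounded.
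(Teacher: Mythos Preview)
Your outline diverges sharply from the paper's proof, and the divergence hides a real gap. The paper does \emph{not} attempt a direct characteristic-free bound on the Cartier index or a Matsusaka-type very ampleness argument in positive characteristic. Instead, it argues by contradiction with a sequence $(X_m,B_m)$ over fields $L_m$, forms the ultraproduct field $k=[L_m]$ (which has characteristic~$0$), and uses the machinery from the proof of Theorem~\ref{t-dcc} to produce a single pair $(Z,\Phi)$ over $k$ whose log canonical model $(W,h_*\Phi)$ controls the whole sequence. It then invokes Alexeev's original characteristic-zero result to get a uniform $r$ with $r(K_W+h_*\Phi)$ very ample, and transfers this back to $r(K_{W_m}+h_{m,*}\Phi_{Z_m})$ for almost all $m$ via the internal-scheme functor, yielding the contradiction. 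Effective very ampleness in positive characteristic is never confronted.

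Your direct route breaks at the sentence ``the ACC for log canonical thresholds in dimension two restricts the coefficients of $\Gamma$ to a finite set depending only on $\CC$.'' This is not what ACC for lct says, and it is false as stated: take $Y$ smooth, $C\subset Y$ a smooth curve, and $b\in\CC=\{1-1/n\}$; then $(Y,bC)$ can be an lc model for infinitely many $b$, and the Cartier index of $K_Y+bC$ is the denominator of $b$, which is unbounded. What is true---and what the paper extracts from its volume-equality argument---is that once the volume is \emph{fixed} and one passes to the canonical model, the coefficients are forced to be rational with bounded denominator; but establishing this is essentially the content of the theorem, not an input to it. Without the index bound, your final step (uniform effective very ampleness via Keel/Reider/Noether substitutes) cannot produce an $r$ independent of the pair, and indeed you flag this step yourself as the ``principal obstacle'' without resolving it.
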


\begin{remark}
  This result was originally proved by V. Alexeev in a series of papers
  (\cite{Alexeev89}, \cite{Alexeev93}, \cite{Alexeev94} and \cite{AM03}). The results
  there are stated for surfaces defined over a fixed algebraically closed field (of
  any characteristc) however, as pointed out to us by Alexeev, they actually hold
  independently of the field. We believe that this was known to some experts, however
  there are some subtleties in Alexeev's arguments that make the proof of the results
  over an arbitrary field not entirely routine.  In this paper we propose an
  alternative proof which we believe simplifies and makes more transparent Alexeev's
  original approach.  The main differences are: A substantial simplification of the
  arguments needed from \cite{Alexeev93}; The use the effective Matsusaka results of
  \cite{Ter99} and \cite{dCF15} (instead of the original papers of Matsusaka and
  Koll\'ar); The use of ultraproducts (cf.~\cite{Schoutens10} and \cite{BHMM11}) to
  simplify some of the arguments of \cite{Alexeev94}. Of course most arguments are
  heavily influenced by \cite{Alexeev94}.\end{remark}

\noindent
The following result, which is of independent interest, is a key step in the proof of
Theorem~\ref{t-m}.

\begin{Theorem}\label{t-dcc}
  Fix a DCC set $\mathscr C$. Let $\mathscr V=\{(K_X+B)^2\}$ where $(X,B)$ is a two
  dimensional \slc model defined over $k$, an algebraically closed field, with
  $\coeff(B)\subseteq \CC$. Then $\mathscr V$ is also a DCC set. In particular, there
  exists a number $\delta >0$, depending only on $\CC$, such that if $0<v\in \mathscr
  V$, then $v\geq \delta$.
\end{Theorem}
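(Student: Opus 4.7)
The plan is to argue by contradiction: suppose that $\mathscr V$ is not DCC, so that there exists a sequence of slc models $(X_i, B_i)$ with $\coeff(B_i)\subseteq \CC$ whose volumes $v_i = (K_{X_i}+B_i)^2$ form a strictly decreasing sequence of positive numbers. The first step is to reduce to the normal lc case. Let $\nu_i \colon (\wt X_i, \wt B_i) \to (X_i, B_i)$ be the normalization, where $\wt B_i$ is the strict transform of $B_i$ plus the conductor divisor. Each connected component of $(\wt X_i, \wt B_i)$ is a projective lc pair with ample log canonical divisor and boundary coefficients in $\CC'=\CC\cup\{1\}$, which remains DCC. Since $v_i$ decomposes as a sum over components and finite sums of positive elements drawn from a DCC set again form a DCC set (each positive element is bounded below by a fixed constant, forcing the number of positive summands to be uniformly bounded), it suffices to treat normal projective lc pairs.

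Next, pass to a minimal resolution $f_i \colon Y_i \to X_i$ and write $f_i^*(K_{X_i}+B_i) = K_{Y_i} + D_i$, where $D_i = f_{i,*}^{-1}B_i + \sum_j a_{i,j} E_{i,j}$ has coefficients in $[0,1]$ by the lc hypothesis. The exceptional coefficients $a_{i,j}$ depend only on the analytic type of the surface lc singularity and on the coefficients of $B_i$ meeting the exceptional locus; a standard analysis of lc surface singularities shows that, as $\coeff(B_i)$ varies in the DCC set $\CC'$, the coefficients $a_{i,j}$ also lie in a DCC set $\CC''$ depending only on $\CC$. Since $(K_{Y_i}+D_i)^2 = v_i$, we reduce further to the following statement: for log canonical pairs $(Y,D)$ on smooth projective surfaces with $\coeff(D)\subseteq \CC''$ and $K_Y+D$ big, the set of volumes $(K_Y+D)^2$ is DCC.

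To attack this smooth case, I would run the $K_Y$-MMP. If $K_Y$ is pseudo-effective, let $Y\to Y'$ contract to a minimal model and push $D$ forward to $D'$; one then expresses $v=(K_{Y'}+D')^2=K_{Y'}^2+2\, K_{Y'}\cdot D'+(D')^2$ as a sum of terms, each controlled using the integrality of $K_{Y'}^2$, the Hodge index theorem, and the DCC of the coefficients of $D'$. If $K_Y$ is not pseudo-effective, then $Y$ is birationally uniruled; one uses the resulting Mori fiber space structure together with the bigness of $K_Y+D$ to classify the possibilities and bound the volume from below.

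The hard part is the non-pseudo-effective case, where $Y$ is ruled or rational and direct intersection-theoretic lower bounds become delicate; this is precisely where, according to the introduction, ultraproducts simplify Alexeev's original argument. The idea is that if DCC failed one could use an ultraproduct construction, in the spirit of \cite{Schoutens10} and \cite{BHMM11}, to produce a limit log surface that would contradict the DCC for log surface volumes established over characteristic zero in \cite{Alexeev94}. Carrying out this reduction rigorously in arbitrary characteristic, together with the essentially formal manipulations of the first two paragraphs, is what should complete the proof.
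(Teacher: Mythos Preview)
Your reduction to the smooth case contains a genuine error. The claim that the exceptional coefficients $a_{i,j}$ of the minimal resolution lie in a DCC set depending only on $\CC$ is false. Take $\CC=\{1/2\}$ and let $(X,B)=(X_n,\tfrac12 C)$ where $X_n$ has an $A_n$ singularity and $C$ is a curve through it whose strict transform meets one end of the exceptional chain. The crepant pullback has exceptional coefficients $\tfrac12\cdot\frac{n+1-j}{n+1}$; in particular the last curve carries coefficient $\tfrac{1}{2(n+1)}$, and these form a strictly decreasing sequence as $n\to\infty$. So the set $\CC''$ you posit does not exist, and the reduction collapses. The paper avoids this by passing instead to a \emph{log} resolution and taking the strict transform of $B$ plus the reduced exceptional divisor (then replacing the coefficient $1$ by $1-1/r$ to make the pair klt); this preserves the volume by Proposition~\ref{p-vols} and keeps the coefficients in $\CC\cup\{1-1/r\}$, which is DCC.

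Beyond this, your outline after the reduction is quite far from what the paper actually does and is too vague to constitute a proof. The decomposition $(K_{Y'}+D')^2=K_{Y'}^2+2K_{Y'}\cdot D'+(D')^2$ does not yield DCC from integrality and Hodge index alone, and your ``hard case'' is just a gesture toward ultraproducts. The paper's argument is uniform (no split into pseudo-effective versus ruled): it first proves that the pairs $(X_m,B_m)$ are \emph{log birationally degree bounded}, using Lemma~\ref{l-c} to pass to a finite coefficient set, Lemma~\ref{l-d} to get a uniform Cartier index, the effective Matsusaka results (Corollary~\ref{c-b}) to produce birational pluricanonical maps, and Theorem~\ref{t-bb} to bound the degree. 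Only then does it invoke ultraproducts (Proposition~\ref{p-ultra}) to produce a single pair $(Z,\widehat B)$ over $k=[L_m]$, and finishes via the b-divisor ``cut'' technology of \cite{HMX13} to compare volumes and obtain the contradiction \eqref{e-eq}. The log birational boundedness step is the essential input you are missing; without it there is no way to pass to a limit object.
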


\begin{Corollary}\label{c-m} 
  Fix constants $\epsilon,\const>0$ and a DCC set $\CC \subset [0,1]\cap \Q$. Then
  the set of all two dimensional $\epsilon$-log canonical pairs $(X,B)$ defined over
  $k$ with $\coeff(B)\subseteq \CC$, $K_X+B$ nef and big and $(K_X+B)^2\leq \const$
  is degree bounded, i.e., there exists a constant $d>0$ such that for any pair
  $(X,B)$ as above there is a very ample divisor $H$ on $X$ such that $H^2\leq d$ and
  $B_{\rm red}\cdot H\leq d$.
\end{Corollary}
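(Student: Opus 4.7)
The plan is to reduce to Theorem \ref{t-m} by passing to the log canonical model of $(X,B)$, and then to bound the birational modifications $X \to X_c$ using the $\epsilon$-lc hypothesis.

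First I would note that since $(X,B)$ is $\epsilon$-lc (and hence klt) with $K_X+B$ nef and big, the log canonical model $g\colon X \to X_c$ of $(X,B)$ exists (two-dimensional MMP is valid in any characteristic). Setting $B_c := g_*B$, one has $K_X+B = g^*(K_{X_c}+B_c)$, so $g$ is crepant; log discrepancies, coefficients and volumes of $(X_c,B_c)$ agree with those of $(X,B)$. Thus $(X_c,B_c)$ is an $\epsilon$-lc \slc pair with $K_{X_c}+B_c$ ample, $\coeff(B_c)\subseteq\CC$, and $(K_{X_c}+B_c)^2\leq\const$.

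Next, by Theorem \ref{t-dcc}, the set of volumes $(K_{X_c}+B_c)^2\in [0,\const]$ is finite. Theorem \ref{t-m} then supplies a uniform integer $r_0>0$ (independent of $k$) such that $H_c := r_0(K_{X_c}+B_c)$ is very ample on $X_c$, with $H_c^2 \leq r_0^2 \const$. The resulting embedding places $X_c$ in a projective space of uniformly bounded dimension, and the Hilbert polynomial of $(X_c,B_c)$ with respect to $H_c$ takes only finitely many values, so $(X_c,B_c)$ varies in a bounded family. In particular, $(B_c)_{\rm red}\cdot H_c$ is uniformly bounded.

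Finally, I would bound $(X,B)$ by controlling $g$. The map $g$ extracts only divisors $E$ over $X_c$ with $a(E;X_c,B_c) \in [\epsilon,1]$, and for $\epsilon$-lc surface singularities appearing in a bounded family the set of such divisorial valuations is finite with uniformly bounded dual-graph invariants. Therefore the modifications $X\to X_c$ form a bounded family, and consequently so does $(X,B)$, which yields the desired constant $d>0$ and a very ample divisor $H$ on each such $X$ satisfying $H^2\leq d$ and $B_{\rm red}\cdot H\leq d$. The hard part will be this final step: making precise and proving that the $\epsilon$-lc crepant modifications of a bounded family of two-dimensional pairs themselves form a bounded family, with bounds that are uniform over all algebraically closed fields $k$.
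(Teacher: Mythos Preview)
Your approach is correct in outline but takes a genuinely different route from the paper's.

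The paper does \emph{not} invoke Theorem~\ref{t-m} at all; instead it re-enters the ultraproduct machinery from the proof of Theorem~\ref{t-dcc}. Given a sequence $(X_m,B_m)$ of $\epsilon$-lc pairs over fields $L_m$, the paper (after adding $1-\epsilon$ to $\CC$) replaces each by a terminal snc model $(Z_m,B_{Z_m})$ which arises as $(Z,B_Z)_{\rm int}$ for a single terminal pair over $k=[L_m]$; thus the $Z_m$ are bounded \emph{by construction}. One then shows $K_Z+B^\infty$ is big (where $B^\infty=\lim B^m$), so that the finitely many minimal models of $(Z,G)$ for $(1-\delta)B^\infty\le G\le B^\infty$ descend to give the minimal models of $(Z_m,B_{Z_m})$. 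Since each original $X_m$ (having $K_{X_m}+B_m$ nef) \emph{is} the minimal model of $(Z_m,B_{Z_m})$, and surface minimal models are unique, the $X_m$ themselves lie in this bounded set.

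Your route --- bound the canonical model $(X_c,B_c)$ via Theorems~\ref{t-dcc} and~\ref{t-m}, then control the crepant morphism $g:X\to X_c$ --- is more elementary in that it uses the main theorems as black boxes and avoids re-running the ultraproduct argument. The cost is exactly the step you flag: you must show that for a bounded family of $\epsilon$-lc surface pairs, the divisors $E$ with $a(E;X_c,B_c)\le 0$ (hence the possible partial crepant extractions) are uniformly bounded. This is true and follows from the combinatorial classification of $\epsilon$-lc surface singularities (in the spirit of Lemma~\ref{l-d} and \cite{Alexeev81}): on a simultaneous log resolution the dual graphs and weights are uniformly bounded, so only boundedly many further toroidal blowups can produce non-positive discrepancy. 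The paper sidesteps this by going \emph{up} to a bounded terminal model and \emph{down} via MMP, where finiteness of minimal models over a compact parameter range does the work; you go \emph{down} to the canonical model and must climb back up. Both are valid; the paper's version is slicker here because it reuses machinery already built, while yours would be preferable in a context where Theorem~\ref{t-m} is available but the ultraproduct set-up is not.
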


We have the following interesting applications which should allow the construction of
moduli spaces of (semi-log-canonical) canonically polarized surfaces for $p\gg 0$.


\begin{Theorem}
  \label{t-ssr} 
  Fix a constant $\const\in\N$ and a DCC set $\CC \subset [0,1]\cap\Q$. Then there
  exists a number $p_0>0$ such that if $L$ is an algebraically closed field of
  characteristic $p>p_0$, $(X,B)$ a pair defined over $L$ such that $\dim X=3$,
  $f:X\to S=\Spec L[[t]]$ a projective morphism with connected fibers
  such that, with $\eta\in S$ denoting the generic point of $S$,
  $\coeff(B_\eta)\subseteq \CC$, $(X_\eta, B_\eta)$ is semi-log canonical, and
  $K_{X_\eta}+ B_\eta$ is ample with $(K_{X_\eta}+ B_\eta)^2=\const$, then there
  exist a 
  separable finite morphism $S'\to S$, a projective morphism $f':X'\to S'$, and a
  pair $(X',B')$ such that $(X'_s,B'_s)$ is semi-log canonical and $K_{X'_s}+B'_s$ is
  ample for all $s\in S'$, and $(X'_{\eta '},B'_{\eta '})$ is isomorphic to $(X_{\eta
  },B_{\eta })\times _\eta \eta '$ where $\eta'\in S'$ denotes the generic point.
\end{Theorem}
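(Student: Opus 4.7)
The plan is to combine Theorem~\ref{t-m} with an ultraproduct argument that reduces the problem to the classical theory of stable reduction for slc surfaces in characteristic zero.

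\emph{Step 1 (uniform embedding in a fixed Hilbert scheme).} By Theorem~\ref{t-m}, fix an integer $r=r(\CC,\const)$ such that for every two-dimensional slc model $(Y,D)$ with $\coeff(D)\subseteq\CC$ and $(K_Y+D)^2=\const$, the divisor $r(K_Y+D)$ is very ample. Then $rD$ must be integral, so by the DCC hypothesis only finitely many coefficient vectors—hence only finitely many Hilbert polynomials $P$—occur. Let $H$ be the corresponding finite disjoint union of Hilbert schemes of embedded pairs $(Y,D)\hookrightarrow\P^N$ with the prescribed polynomials; then $H$ is projective over $\Spec\Z$. A choice of basis of $\coh^0(X_\eta,r(K_{X_\eta}+B_\eta))$ determines a morphism $h_\eta:\Spec L((t))\to H$.

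\emph{Step 2 (extension via properness).} Since $H$ is projective, the valuative criterion of properness directly yields an extension $h:\Spec L[[t]]\to H$, and pulling back the universal family produces a flat family $(\wh X,\wh B)\to S$ with generic fiber $(X_\eta,B_\eta)$ up to projective equivalence. The closed fiber $(\wh X_0,\wh B_0)$ need not be slc, so the remaining task is to modify this family after a finite separable base change to obtain one with slc closed fiber.

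\emph{Step 3 (ultraproduct reduction to characteristic zero).} Suppose the theorem fails. Then there exist counterexamples $(X_n,B_n)/S_n$ over fields $L_n$ of characteristic $p_n\to\infty$. The generic fibers give Hilbert points $h_n\in H(L_n((t)))$; passing to a non-principal ultrafilter produces a field $\wt L$ of characteristic zero together with a point $\wt h\in H(\wt L((t)))$, hence a family $(\wt X,\wt B)\to\Spec\wt L[[t]]$ whose generic fiber is slc with the prescribed numerical invariants. Classical stable reduction for families of slc surfaces in characteristic zero (Koll\'ar--Shepherd-Barron, Alexeev) then furnishes a finite separable base change $\wt S'\to\Spec\wt L[[t]]$ of bounded degree $d$ and a family $(\wt X',\wt B')\to\wt S'$ all of whose fibers are slc, agreeing with $(\wt X,\wt B)\times_{\wt S}\wt S'$ over the generic point. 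Every datum entering this construction—the integer $d$, the Hilbert point of $(\wt X',\wt B')$, its coefficient vector, slc-ness of every fiber, ampleness and numerical class of $K+B$—corresponds to a constructible condition on the bounded finite-type structure parameterized by $H$. By \L{}o\'s's theorem, the same construction yields a valid stable reduction for $(X_n,B_n)/S_n$ for almost every $n$, a contradiction. Separability for $n\gg 0$ is automatic once $p_n>d$, as any ramification is then tame.

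\emph{Main obstacle.} The delicate point is Step~3: one must verify that each ingredient of the characteristic-zero construction (slc-ness of fibers, ampleness of $K+B$ with the correct numerical class, the DCC coefficient condition, and the existence of a bounded-degree base change effecting the replacement) is expressible as a constructible condition on the fixed finite-type parameter space $H$, so that \L{}o\'s's theorem genuinely transfers the conclusion back to characteristic $p\gg 0$. This is precisely where Theorem~\ref{t-m} (bounding $r$ and $N$) together with the DCC hypothesis (which restricts $\coeff(B)$ to a finite set once $r$ is fixed) becomes indispensable: without uniform boundedness one could not capture the data across fields of unbounded characteristic in a single finite-type family.
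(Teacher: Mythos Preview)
Your outline is essentially the paper's own argument: argue by contradiction with a sequence of counterexamples in characteristics $p_n\to\infty$, use Theorem~\ref{t-m} to embed all generic fibers in bounded degree, pass via an ultraproduct to a characteristic-zero situation, invoke stable reduction there, and transfer the resulting semi-log canonical family back to almost all $n$. The paper packages the transfer step in the ``internal scheme'' formalism of \cite{BHMM11} (Proposition~\ref{p-ultra}, Lemmas~\ref{l-u}, \ref{l-um}) rather than in Hilbert-scheme/\L o\'s language, but these are two dialects for the same mechanism; your identification of the constructibility issue as the crux is exactly right.

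Two small points. First, your Step~2 is extraneous: once you are in characteristic zero the stable-reduction machinery already produces the extension after base change, so there is no need to first fill in via properness of $H$ and then repair the closed fiber. Second, be careful with the sentence ``produces a point $\wt h\in H(\wt L((t)))$'': the ultraproduct of the $L_n((t))$ is strictly larger than $\wt L((t))$, so what you actually get is a point of $H$ over $[L_n((t))]$. The paper sidesteps this by working over $F=[F_m]$ directly (via Proposition~\ref{p-ultra}) and then spreading out to a $1$-dimensional base $S$ over $k=[L_m]$ before doing the characteristic-zero stable reduction; you should indicate the analogous step rather than asserting the point lands in $H(\wt L((t)))$.
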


\noindent
Theorem~\ref{t-ssr} will follow as Corollary~\ref{cor:stable-red-one} to the somewhat
more technical Theorem~\ref{t-ssr-two} which we only state later. It also implies
Corollary~\ref{cor:stable-red-two}, a variant of the above statement.

Finally, using Theorem~\ref{t-ssr} we will we prove another variant:


\begin{Theorem}\label{t-ssr-three}
  Fix a constant $\const\in \N$ and a DCC set $\mathscr C\subset [0,1]\cap \mathbb
  Q$. For each $m>0$ let $L_m$ be an algebraically closed field of characteristic
  $p_m>0$ such that $\lim p_m=\infty$ and let $k=[L_m]$.  Further let $(X_m,B_m)$ be
  a pair defined over $L_m$ such that $\dim X_m=3$, and let $f_m:X_m\to S_m$ be a
  projective morphism with connected fibers to a smooth curve.
  Assume that for each $m\in\Z$ with $\eta\in S_m$ denoting the generic point of
  $S_m$,
  $\coeff(B_{m,\eta})\subseteq \CC$, $(X_{m,\eta},
  B_{m,\eta})$ is semi-log canonical, and $K_{X_{m,\eta}}+ B_{m,\eta}$ is ample with $(K_{X_{m,\eta}}+
  B_{m,\eta})^2=\const$.

  Then 
  for all but finitely many $m$'s there exist a separable finite morphism $\sigma
  _m:S'_m\to S_m$, a projective morphism $X'_m\to S'_m$, and a pair $(X'_m,B'_m)$
  such that $(X'_{m,s},B'_{m,s})$ is semi-log canonical and $K_{X'_{m,s}}+B'_{m,s}$
  is ample for all $s\in S'_m$, and $(X'_{m,s},B'_{m,s})$ is isomorphic to
  $(X_{m,\sigma_m(s)},B_{m,\sigma_m(s)})$ for general $s\in S'_m$.
\end{Theorem}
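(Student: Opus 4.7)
Let $p_0$ denote the constant provided by Theorem~\ref{t-ssr} for the data $\const$ and $\CC$. Since $p_m\to\infty$, we have $p_m>p_0$ for all but finitely many $m$; fix such an $m$. The strategy is to apply Theorem~\ref{t-ssr} formally at each closed point of $S_m$ where the family fails to be stable, and then glue the resulting local reductions into a single global family after a separable base change.

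Let $U_m\subseteq S_m$ be the largest open set over which the fibers of $(X_m,B_m)\to S_m$ are \slc with $K+B$ ample; openness of the \slc property and of fiberwise ampleness in a flat projective family, combined with the hypothesis on the generic fiber, shows that $Z_m:=S_m\setminus U_m$ is a finite set of closed points. For each $s\in Z_m$, smoothness of $S_m$ gives $\wh{\OO}_{S_m,s}\cong L_m[[t]]$, and pulling back the family along $\Spec\wh{\OO}_{S_m,s}\to S_m$ places us in the setting of Theorem~\ref{t-ssr}. That theorem, applicable because $p_m>p_0$, yields a separable finite morphism $\Spec R_s\to\Spec\wh{\OO}_{S_m,s}$ with $R_s$ a complete discrete valuation ring, together with a local stable family over $\Spec R_s$ whose generic fiber pulls back the original.

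To globalize, I would choose a finite separable morphism $\sigma_m\colon S'_m\to S_m$ of smooth curves whose completion at each preimage of a point $s\in Z_m$ realizes the prescribed local cover $\Spec R_s\to\Spec\wh{\OO}_{S_m,s}$; such a $\sigma_m$ exists by the standard theory of finite separable covers of smooth curves with prescribed local ramification. Over $\sigma_m^{-1}(U_m)$ the pullback $\sigma_m^*(X_m,B_m)$ is already a stable family, while formally at each preimage of a point of $Z_m$ it may be replaced by the local stable family constructed above. Algebraization of these formal modifications along the finitely many new closed points (via Artin approximation) produces a projective family $(X'_m,B'_m)\to S'_m$ with \slc fibers and $K+B'_m$ ample on each fiber, agreeing with $\sigma_m^*(X_m,B_m)$ over the dense open $\sigma_m^{-1}(U_m)$ and thus furnishing the required isomorphism $(X'_{m,s},B'_{m,s})\cong(X_{m,\sigma_m(s)},B_{m,\sigma_m(s)})$ for general $s\in S'_m$.

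The principal technical obstacle should be this last algebraization step: ensuring that the formal local stable replacements combine coherently into a single projective scheme over $S'_m$ rather than merely a formal modification. This hinges on uniqueness of the stable extension of the generic \slc family, which in turn rests on the ampleness of $K+B$ on fibers together with the \slc hypothesis, providing the separatedness needed for the gluing.
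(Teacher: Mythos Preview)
Your overall strategy—reduce to the formal local setting via Theorem~\ref{t-ssr} at each bad point, then globalize over a suitable separable cover—is essentially the paper's route (the paper phrases it as a contradiction argument and re-runs the ultraproduct construction rather than citing Theorem~\ref{t-ssr} as a black box, but this is cosmetic). The substantive problem is your algebraization step.

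Invoking ``Artin approximation'' here is not correct. What you have at each bad point $s'$ is a projective family over the complete local ring $\wh{\OO}_{S'_m,s'}$, agreeing over the punctured formal disk with the pullback family over the Zariski open $\sigma_m^{-1}(U_m)$. Artin approximation concerns \'etale-local solutions to systems of equations, not the gluing of projective schemes across a formal/Zariski-open cover; what you would actually need is a Beauville--Laszlo type descent for relatively polarized projective schemes, which is neither what you cited nor something you can dispatch in a sentence. And uniqueness of stable limits, which you appeal to for ``separatedness,'' tells you two algebraic extensions agree if both exist—it does not manufacture an algebraic object out of formal data.

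The paper sidesteps the gluing problem entirely. Rather than trying to algebraize the formal family $\wh X'_m$, it begins with an algebraic object already living over $S'_m$: a log resolution $\wt f_m:\wt X_m\to S'_m$ of the pullback $X_m\times_{S_m}S'_m$, with boundary $\wt B_m$ given by the strict transform of $B_m$ plus the reduced exceptional and special-fiber divisors. The point is then to show that the relative log canonical ring
\[
\bigoplus_{q\geq 0}(\wt f_m)_*\,\OO_{\wt X_m}\bigl(q(K_{\wt X_m}+\wt B_m)\bigr)
\]
is a finitely generated $\OO_{S'_m}$-algebra. Since completion is faithfully flat, this can be checked after passing to $\wh S'_m$; there one takes a common resolution of $\wh{\wt X}_m$ and $\wh X'_m$, and the negativity lemma (applied on the central fiber) identifies the completed canonical ring with that of the formal stable model, which is finitely generated because $K_{\wh X'_m}+\wh B'_m$ is relatively ample. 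Taking $\mathrm{Proj}$ then produces the global projective family $X'_m\to S'_m$ in one stroke, with the correct special fiber and no gluing required. This finite-generation-then-$\mathrm{Proj}$ maneuver is the key idea your proposal is missing.
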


\medskip

\noindent%
{\bf Acknowledgements.} %
The authors are grateful to Valery Alexeev for useful comments and especially for
explaining why the results of \cite{Alexeev94} hold independently of the field and to
Zsolt Patakfalvi for helpful remarks.

\section{Preliminaries}
\subsection{Definitions} 
We follow the definitions of \cite{SingBook} (in particular for discrepancies,
terminal, klt and lc pairs). A \emph{pair} $(X,B)$ consists of a demi-normal variety
$X$ (see Definition~\ref{d-slc}) and an effective $\Q$-divisor $B$ on $X$ such that
none of the irreducible components of $B$ is contained in ${\rm Sing}\,X$.  The set
of coefficients appearing in the irreducible decomposition $B=\sum_{i=1}^r b_iB_i$ is
denoted by $\coeff(B)=\{b_i\vert i=1,\dots,r\}$ and we let $B_{\rm red}=\sum _{i=1}^r
B_i$. Recall that a pair $(X,B)$ is \emph{$\epsilon$-klt} (resp.\
\emph{$\epsilon$-lc}) if $X$ is normal and $a(X,B)> \epsilon -1$ (resp.\ $a(X,B)\geq \epsilon -1$)
where $a(X,B)$ is the total discrepancy of $(X,B)$, in particular $b_i<1-\epsilon$
(resp.\ $b_i\leq 1-\epsilon$).

We say that $\CC\subset \R$ is a \emph{DCC set} if given any non-increasing sequence
$(a_i)_{i\in \N}$ of elements of $\CC$ then $(a_i)_{i\in \N}$ is constant for all
$i\gg 0$.  The typical example is $I=\{1-\frac 1 m|m\in \N\}$. We let $I_+=\{0\}\cup
\{i=\sum _{p=1}^l i_p|i_1,\ldots ,i_l\in I\}$ and $D(I)=\{a=\frac{m-1+f}{m},\ m\in
\N,\ f\in I_+\cap[0,1]\}$. It is well known that $I$ is a DCC set if and only if and
only if $D(I)$ is a DCC set.  Recall the following (see e.g., \cite[2.7]{Alexeev94}).

\begin{lemma}[Shokurov's Log Adjunction Formula]\label{l-sh} 
  Let $(X,S+B)$ be a log canonical surface pair where $B=\sum b_iB_i$ and $S$ is a
  prime divisor with normalization $\nu :S^\nu \to S$, then 
  \[(K _X+S+B)|_{S^\nu}=K_{S^\nu}+{\rm Diff}_{S^\nu}(B)=K_{S^\nu}+{\rm
    Diff}(0)_{S^\nu}+B|_{S^\nu}\] where the coefficients of ${\rm Diff}_{S^\nu}(B)$
  are $1$ or of the form $(n-1+\sum a_ib_i)/n\in [0,1]$ for some $n,a_i\in \N$. In
  particular, ${\rm coeff}({\rm Diff}_{S^\nu}(B))\subseteq D(I)$ if ${\rm
    coeff}(B)\subseteq I$.
\end{lemma}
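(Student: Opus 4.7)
The plan is to reduce to a standard adjunction computation on a log resolution and then analyze the coefficients of the pushed-down divisor on the normalization $S^\nu$ via the classification of log canonical surface singularities.

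First, I would choose a log resolution $\pi\colon Y\to X$ of $(X,S+B)$, so that $Y$ is smooth, the strict transform $\wt S$ is smooth, and $\pi^{-1}(S\cup \Supp B)$ has simple normal crossings. Writing the discrepancy formula
\[
K_Y + \wt S + \wt B + \sum_i (-a_i) E_i = \pi^*(K_X+S+B),
\]
where $a_i=a(E_i;X,S+B)\ge -1$ by the lc hypothesis, I restrict everything to $\wt S$. On the smooth side ordinary adjunction gives $(K_Y+\wt S)|_{\wt S}=K_{\wt S}$, so
\[
K_{\wt S} + \wt B|_{\wt S} + \sum_i (-a_i)\,E_i|_{\wt S} \;=\; (\pi|_{\wt S})^{*}\bigl((K_X+S+B)|_{S^\nu}\bigr),
\]
using that $\pi|_{\wt S}\colon \wt S\to S^\nu$ factors through the normalization, since $\wt S$ is smooth. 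Pushing this equality forward to $S^\nu$ and defining $\Diff_{S^\nu}(B)$ to be the divisor whose pullback to $\wt S$ equals $\wt B|_{\wt S} + \sum_i(-a_i)E_i|_{\wt S}$ modulo terms exceptional over $S^\nu$ immediately yields the first equality in the statement. Separating the $B$ contribution from the rest gives $\Diff_{S^\nu}(B)=\Diff(0)_{S^\nu}+B|_{S^\nu}$.

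For the explicit coefficient formula, I work locally at a general point $p$ of a prime divisor $D\subset S^\nu$ contributing to the different. At such a point I may assume $X$ is analytically (or in the henselization) a cyclic quotient singularity along $S$, which is the content of the classification of surface lc pairs $(X,S)$ with $S$ a generically reduced divisor through the singular locus: the germ is of type $\frac{1}{n}(1,-1)$ along $S$, for some $n\in\N$ determined by $D$. A direct cover calculation on the index-one cover, followed by Hurwitz/adjunction on the induced finite map $\wt D\to D$, yields that the contribution to the coefficient of $D$ in $\Diff(0)_{S^\nu}$ is $(n-1)/n$. Each component $B_i$ of $B$ meeting $S$ transversally at $p$ lifts on the cover to a divisor whose multiplicity along the preimage of $D$ is the local intersection number $a_i:=\mult_D(B_i\cdot S)$, and pushing down divides this by $n$ and contributes $a_ib_i/n$, yielding the total coefficient $(n-1+\sum a_i b_i)/n$. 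When $D$ lies in the non-klt locus of $(X,S+B)$ the coefficient degenerates to $1$, accounting for the first alternative.

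The main obstacle is the local classification step in the second paragraph: verifying that at a general point of $D$ the pair $(X,S)$ is analytically a quotient of a smooth germ by a cyclic group acting in the stated form. Granting this, the coefficient computation is a straightforward cover and pushforward. Finally, the statement that $\coeff(\Diff_{S^\nu}(B))\subseteq D(I)$ whenever $\coeff(B)\subseteq I$ is immediate from the explicit formula and the definition of $D(I)=\{(m-1+f)/m\mid m\in\N,\ f\in I_+\cap[0,1]\}$, since $\sum a_i b_i$ lies in $I_+$ and necessarily in $[0,1]$ by the log canonical hypothesis.
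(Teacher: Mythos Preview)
The paper does not prove this lemma; it is stated as a known result with the citation ``see e.g., \cite[2.7]{Alexeev94}'' and no proof environment follows. Your outline is the standard argument for Shokurov's adjunction on surfaces (log resolution to define $\Diff$, then the local classification of lc surface germs $(X,S)$ to compute coefficients), and it is essentially correct.

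Two small points worth tightening. First, the decomposition $\Diff_{S^\nu}(B)=\Diff(0)_{S^\nu}+B|_{S^\nu}$ requires that $B|_{S^\nu}$ be interpreted as the numerical (Mumford) restriction, since $B$ itself need not be $\Q$-Cartier; you should say this explicitly rather than ``separating the $B$ contribution from the rest,'' which suggests the split is formal. Second, your sentence about $B_i$ ``meeting $S$ transversally at $p$'' is inconsistent with then allowing an arbitrary local intersection multiplicity $a_i$; drop the transversality phrase and simply define $a_i$ as the local intersection number on the index-one cover. With those clarifications the argument is fine, and the final containment in $D(I)$ follows exactly as you say.
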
 

For later use we recall the following elementary observation.

\begin{lemma}\label{l-ineq} 
  Let $(X,S+B)$ be a log canonical surface pair where $S$ is a prime divisor with
  normalization $\nu :S^\nu \to S$. Then for any $1\geq \lambda\geq 0$ we
  have \[(K_X+S+\lambda B)|_{S^\nu}\geq K_{S^\nu}+\lambda {\rm Diff}_{S^\nu}(B).\]
\end{lemma}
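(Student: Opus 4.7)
The idea is to apply Shokurov's adjunction formula (Lemma~\ref{l-sh}) twice, once with the full boundary $B$ and once with boundary $0$, and then to interpolate linearly in $\lambda$. Since $(X,S+B)$ is log canonical, Lemma~\ref{l-sh} gives
\[
(K_X+S+B)|_{S^\nu} \;=\; K_{S^\nu}+\Diff_{S^\nu}(0)+B|_{S^\nu},
\]
and applying the same formula to the sub-pair $(X,S)$ (which is automatically log canonical, being bounded above by $(X,S+B)$) yields
\[
(K_X+S)|_{S^\nu} \;=\; K_{S^\nu}+\Diff_{S^\nu}(0).
\]

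Writing $K_X+S+\lambda B = (1-\lambda)(K_X+S)+\lambda(K_X+S+B)$ and restricting to $S^\nu$ by $\Q$-linearity, I combine these two equalities to obtain
\[
(K_X+S+\lambda B)|_{S^\nu} \;=\; K_{S^\nu}+\Diff_{S^\nu}(0)+\lambda\, B|_{S^\nu}.
\]
On the other hand, the right-hand side of the inequality expands, again by Lemma~\ref{l-sh}, as
\[
K_{S^\nu}+\lambda\, \Diff_{S^\nu}(B) \;=\; K_{S^\nu}+\lambda\, \Diff_{S^\nu}(0)+\lambda\, B|_{S^\nu}.
\]
Subtracting, the difference between the two sides is $(1-\lambda)\Diff_{S^\nu}(0)$, which is a non-negative $\Q$-divisor because $\lambda\le 1$ and $\Diff_{S^\nu}(0)\ge 0$ (its coefficients are of the form $(n-1)/n$ by Lemma~\ref{l-sh}). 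This gives the claimed inequality.

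The only subtlety worth flagging is the meaning of the restrictions: a priori $K_X+S$ and $B$ need not be individually $\Q$-Cartier, only their sum is. However, on a log canonical surface one can interpret the restrictions consistently via the different (as in Lemma~\ref{l-sh}), or equivalently via the convex combination $K_X+S+\lambda B = (1-\lambda)(K_X+S) + \lambda(K_X+S+B)$ of the two $\Q$-Cartier divisors $K_X+S+tB$ for $t\in\{0,1\}$ after passing to a suitable multiple. Once this is granted the argument is just bookkeeping, and no step should be difficult.
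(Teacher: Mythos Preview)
Your proof is correct and is essentially the same as the paper's. The paper's one-line argument simply compares coefficients directly: it notes that it suffices to check $(n-1+\lambda\sum a_ib_i)/n\ge \lambda(n-1+\sum a_ib_i)/n$, which rearranges to $(1-\lambda)(n-1)/n\ge 0$; this is exactly your observation that the difference of the two sides is $(1-\lambda)\Diff_{S^\nu}(0)$, since the coefficients of $\Diff_{S^\nu}(0)$ are the numbers $(n-1)/n$. Your remark about $\Q$-Cartierness is fair but applies equally to the paper's proof; in the applications within the paper (e.g.\ Lemma~\ref{l-a}) the ambient surface is klt with $\rho=1$, hence $\Q$-factorial, so no issue arises.
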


\begin{proof} Let $B=\sum b_iB_i$. It suffices to show that $(n-1+\lambda \sum
  a_ib_i)/n\geq \lambda (n-1+\sum a_ib_i)/n$.
\end{proof}

For $a,b\in \R$ set $a\vee b={\rm max} \{a,b\}$ and $a\wedge b={\rm
  min}\{a,b\}$. Similarly, for $A=\sum a_iA_i$ and $A'=\sum a'_i A_i$ $\mathbb
R$-divisors set $A\vee A'=\sum (a_i\vee a'_i)A_i$ and $A\wedge A'=\sum (a_i\wedge
a'_i)A_i$.  A pair $(X,B)$ is a \emph{simple normal crossings pair} or an \emph{snc
  pair} if $X$ is smooth and the support of $B$ consists of smooth 
divisors meeting transversely.

\newcommand{\sncp}{snc pair\xspace}

\begin{defn}\label{d-slc} 
  A scheme $X$ is called \emph{demi-normal} if it is seminormal, $S_2$ and $G_1$ or
  equivalently if it is $S_2$ and its codimension $1$ points are either regular
  points or nodes (cf.\ \cite[5.1,10.14]{SingBook}). Let $X$ be a demi-normal scheme
  with normalization $\pi:\wt X\to X$ and conductors $D\subset X$ and $\wt D\subset
  \wt X$. Let $B\subset X$ be an effective $\Q$-divisor whose support does not
  contain any irreducible component of $D$ and $\wt B\subset \wt X$ the divisorial
  part of $\pi^{-1}(B)$.

  The pair $(X,B)$ is called \emph{semi-log canonical} or \emph{slc} if $X$ is
  demi-normal, $K_X+B$ is $\Q$-Cartier and $(\wt X, \wt B+\wt D)$ is log canonical.
  An \emph{\slc model} (or \emph{semi log canonical model}) is a projective pair
  $(X,B)$ which is slc and such that $K_X+B$ is ample.
\end{defn}

Let $\pi :X\to U$ be a projective morphism of normal varieties, then by definition $\pi _*\mathscr O _X(D)=\pi _*\mathscr O _X(\lfloor D\rfloor )$.
Given an $\mathbb R$-divisor $D$ on a normal projective variety $X$, the
\emph{volume} of $D$ is defined as \[{\rm vol}(D)=\lim _{m\to \infty} \frac
{h^0(\mathscr O _X(\lfloor mD \rfloor ))}{m^n/n!}.\] If $D$ is nef, then ${\rm
  vol}(D)=D^{\dim X}$. Note that ${\rm vol}(\lambda D)=\lambda ^{\dim X}{\rm vol}(D)$ for any $\lambda >0$.
It is easy to see that if $f:X\to Y$ is a morphism of normal
projective varieties, then ${\rm vol}(D)\leq {\rm vol}( f_* D)$ and if $E$ is an
$\mathbb R$-Cartier divisor on $Y$ such that $D-f^*E\geq 0$ is $f$-excepional, then
${\rm vol}(E)={\rm vol}(D)$.

Let $X$ be a quasi-projective variety then a \emph{b-divisor} $\mathbf B$ over $X$ is
given by a collection of divisors $\mathbf B_{X'}$ on $X'$ for any birational
morphism $X'\to X$ with the property that if $X''\to X$ is another birational
morphism and $\nu$ is a valuation corresponding to a divisor on $X'$ and $X''$, then
${\rm mult}_\nu (\mathbf B_{X'})={\rm mult}_\nu (\mathbf B_{X''})$. In other words a
$b$-divisor over $X$ is defined by its multiplicity along any divisor over
$X$. Similarly one defines $\mathbb R$-b-divisors etc.  

Let $(X,B)$ be a pair. Then typical examples of $\R$-b-divisors are as follows.

\begin{enumerate}
\item the \emph{discrepancy b-divisor}, $\mathbf A =\mathbf A_B$ is defined by the
  equation \[K_{X'}=\nu ^*(K_X+B) +\mathbf A_{B,X'}\] for any birational morphism
  $\nu :X'\to X$,
\item the b-divisor $\mathbf L=\mathbf L_B$ is defined by the
  equation \[K_{X'}+\mathbf L_{B,X'}=\nu ^*(K_X+B) +E_{B,X'}\] where $\mathbf
  L_{B,X'}$ and $E_{B,X'}$ are effective with no common components, for any
  birational morphism $\nu :X'\to X$,
\item the b-divisor $\mathbf M=\mathbf M_B$ is defined by $\mult _E(\mathbf M)=\mult
  _E(B)$ if $E$ is a divisor on $X$ and $\mult _E(\mathbf M)=1$ otherwise.
\end{enumerate} 

We have (cf.\ \cite[5.3]{HMX13}):

\begin{proposition}\label{p-vols} 
  Let $(X,B)$ be a projective \sncp, $f:Y\to X$ a log resolution of $(X,B)$, and
  $g:X\to Z$ a birational projective morphism such that $(Z,g_*B)$ is also an
  \sncp. Then
  \begin{enumerate}
  \item $\vol (K_X+B)=\vol (K_Y+\Gamma )$ for any $\mathbb R$-divisor $\Gamma $ such
    that $\Gamma -\mathbf L _{B,Y}\geq 0$ is $f$-exceptional.
  \item $\vol (K_X+B)=\vol (K_X+\Theta)$ where $\Theta =B\wedge \mathbf L_{g_*B,X}$.
  \end{enumerate}
\end{proposition}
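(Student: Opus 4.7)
This is essentially bookkeeping from the definition of $\mathbf{L}_{B,Y}$. The relation
\[K_Y + \mathbf{L}_{B,Y} = f^*(K_X+B) + E_{B,Y}\]
together with the ``no common components'' clause forces $E_{B,Y}$ to vanish along any non-$f$-exceptional prime $F\subset Y$: indeed, for $D = f_*F$ one computes $\coeff_F(\mathbf{L}_{B,Y} - E_{B,Y}) = \coeff_D(B) \geq 0$, ruling out $\coeff_F(E_{B,Y}) > 0$. Thus $E_{B,Y}$ is effective and $f$-exceptional. Adding $\Gamma - \mathbf{L}_{B,Y} \geq 0$ (effective and $f$-exceptional by hypothesis) to both sides gives $K_Y + \Gamma = f^*(K_X+B) + E'$ with $E' := E_{B,Y} + (\Gamma - \mathbf{L}_{B,Y})$ effective and $f$-exceptional. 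The identity $f_*\mathscr{O}_Y(\lfloor mE'\rfloor) = \mathscr{O}_X$ (valid for any projective birational morphism with $X$ normal, since $\mathrm{supp}(E')$ is contracted to codimension $\geq 2$) combined with the projection formula yields the volume equality. Note that this argument uses only that $f$ is projective birational.

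\textbf{Part (2).} The inequality $\vol(K_X+\Theta) \leq \vol(K_X+B)$ is immediate. For the reverse, the first key observation is that $G := B - \Theta$ is effective and supported in the $g$-exceptional locus: on any non-$g$-exceptional prime $D\subset X$, both $B$ and $\mathbf{L}_{g_*B,X}$ carry the coefficient $\coeff_{g(D)}(g_*B)$, forcing $\Theta_D = B_D$. I would then perform a coefficient-level analysis on each $g$-exceptional prime $E_i\subset X$, organized by the sign of $a_i := a(E_i;Z,g_*B)$ and the position of $b_i := \coeff_{E_i}(B)$ relative to $\max(0,-a_i)$. This case-by-case check produces a decomposition
\[K_X + \Theta = g^*(K_Z + g_*B) + P - N,\]
with $P, N \geq 0$ effective $g$-exceptional supported on \emph{disjoint} sets of components, and shows that $\mathrm{supp}(G) \cap \mathrm{supp}(N) = \emptyset$; consequently $K_X + B = g^*(K_Z + g_*B) + (P + G) - N$ with $P + G$ and $N$ again on disjoint supports.

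The main obstacle is then the identity
\[\vol\bigl(g^*L + P - N\bigr) = \vol\bigl(g^*L + (P + G) - N\bigr), \qquad L := K_Z + g_*B,\]
asserting that adding an effective $g$-exceptional divisor disjoint from $N$ does not change the volume. I expect to prove it via the Zariski decomposition on the surface $X$: since $g^*L$ is nef with $g^*L \cdot E_i = 0$ for every $g$-exceptional $E_i$, and the intersection form on $g$-exceptional divisors is negative definite, both divisors admit Zariski decompositions with identical nef parts, the extra contribution $+G$ being absorbed into the negative part of the decomposition. This gives equal volumes on both sides and completes the proof.
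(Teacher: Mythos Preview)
The paper itself gives no proof of this proposition; it simply cites \cite[5.3]{HMX13}. So your attempt is being compared against the argument in Hacon--M\textsuperscript{c}Kernan--Xu rather than anything written here.

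Your Part~(1) is correct and is the standard unwinding of the definition of $\mathbf L_{B,Y}$: once you know $E_{B,Y}$ is effective and $f$-exceptional, the volume equality is immediate.

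Your Part~(2) has a genuine gap. You write ``since $g^*L$ is nef with $g^*L\cdot E_i=0$ for every $g$-exceptional $E_i$'', but $L=K_Z+g_*B$ is \emph{not} assumed nef anywhere in the statement, and in general it is not. Without nefness of $g^*L$ you cannot conclude that the Zariski positive parts of $g^*L+P-N$ and $g^*L+(P+G)-N$ coincide, and the rest of the sketch collapses. A second, lesser issue: the proposition is stated without any dimension hypothesis (and \cite[5.3]{HMX13} proves it in arbitrary dimension), while Zariski decomposition is a surface phenomenon; since this paper only ever applies the proposition to surfaces this is forgivable, but you should flag the restriction.

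There is a correct kernel hiding in your case analysis that could be developed: whenever $G_{E_i}>0$ you showed $b_i+a_i>0$, and since $(K_X+B)\cdot E_i=(g^*L+\sum(b_j+a_j)E_j)\cdot E_i$ with $g^*L\cdot E_i=0$ and the exceptional intersection form negative definite, one can show such $E_i$ lie in the negative part of the Zariski decomposition of $K_X+B$. Turning this into the quantitative statement $R\geq G$ (so that $K_X+\Theta=Q+(R-G)$ is again a Zariski decomposition with the same $Q$) requires a further negativity argument you have not supplied. The cleanest route, and the one taken in \cite{HMX13}, avoids Zariski decomposition entirely and compares sections directly.
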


\begin{defn}\label{d-bb} We say that a set of varieties $\mathscr X$ is {\it degree
    bounded} if there exists a constant $m>0$ such that for each $X\in \mathscr X$
  there is a very ample divisor $H$ on $X$ with $H^{{\rm dim}(X)}<m$. A set of pairs
  $\mathscr P$ is {\it degree bounded} if there exists an integer $m>0$ such that for
  each $(X,B)\in \mathscr P$ there is a very ample divisor $H$ on $X$ with $H^{{\rm
      dim}(X)}<m$ and $H^{{\rm dim}(X)-1}\cdot B_{\rm red}<m$. A set of pairs
  $\mathscr B$ is {\it log birationally degree bounded} if there exists a degree
  bounded set of pairs $\mathscr P$ such that for any $(X,B)\in \mathscr B$ there
  exists a pair $(Z,D)\in \mathscr X$ and a birational map $f:Z\dasharrow X$ such
  that $D_{\rm red }$ contains the strict transform of $B_{\rm red}$ and all
  $f$-exceptional divisors.
\end{defn}

\subsection{Ultraproducts}
We briefly recall a few results about ultrafilters and ultraproducts that will be
needed in what follows. The interested reader may consult \cite{Schoutens10} and
\cite{BHMM11} for more background.

We fix $\UU$ a \emph{non-principal ultrafilter} on $\N$ for the sequel. So $\UU$ is a
non-empty collection of infinite subsets of $\N$ such that
\begin{enumerate}
\item if $A\subset B\subset \N$ and $A\in \UU$, then $B\in \UU$,  
\item if $A,B\in \UU$, then $A\cap B\in \UU$, and
\item for any $A\subset \N$, either $A$ or $\N \setminus A$ are in
  $\UU$.\end{enumerate} We say that a property $P(m)$ holds for \emph{almost all}
$m\in \N$ if $\{m\in \N | P(m)\ {\rm holds }\}\in \UU$.  Let $(A_m)_{m\in \N}$ be a
sequence of rings then the \emph{ultraproduct} $[A_m]$ is defined by $[A_m]:=\left(
  \prod _{m\in \N}A_m\right)/\sim $ where $\sim$ is the equivalence relation defined
by $(a_m)\sim (b_m)$ iff $a_m=b_m$ for almost all $m\in \N$. $[a_m]\in [A_m]$ denotes the
equivalence class corresponding to the sequence $(a_m)_{m\in \N}$. Note that $a_m$
only needs to be defined for almost all $m\in \N$.  If, for almost all $m\in \N$, the $A_m$
are reduced (resp.\ fields), then so is $[A_m]$. Given a sequence of homomorphisms of
rings $f_m:A_m\to B_m$ then $[f_m]:[A_m]\to [B_m]$ is a homomorphism of rings defined
by $[f_m]([a_m])=[f_m(a_m)]$.

Suppose now that $L_m$ is a sequence of fields and $k=[L_m]$. For any fixed integer
$n>0$, we define the ring of \emph{internal polynomials} 
\[
k[x_1,\ldots,x_n]_{\rm int}=\left[ L_m[x_1,\ldots,x_n]\right].
\] 
Note that the name is misleading as the elements of $k[x_1,\ldots,x_n]_{\rm int}$ are
not necessarily polynomials.  
There exists a natural embedding 
\[
k[x_1,\ldots,x_n]\hookrightarrow k[x_1,\ldots,x_n]_{\rm int}
\] 
whose image is the set of elements $g=[g_m]\in k[x_1,\ldots,x_n]_{\rm int}$ of
bounded degree (i.e., such that there exists an integer $d$ with ${\rm deg}(g_m)\leq
d$ for almost all $m\in \N$).  For an ideal $\fra \subseteq k[x_1,...,x_n]$ we put
$\fra_{\rm int}:=\fra\cdot k[x_1,\dots,x_n]_{\rm int}$.

We have the following.

\begin{theorem}[\protect{\cite[Thm.~1.1]{MR739626}}]\label{l-ultra}
  The extension $k[x_1,\ldots,x_n]\hookrightarrow k[x_1,\ldots,x_n]_{\rm int}$ is
  faithfully flat. In particular, for an ideal $\fra \subseteq k[x_1,\dots,x_n]$,
  $\fra_{\rm int}\cap k[x_1,\dots,x_n] =\fra$.
\end{theorem}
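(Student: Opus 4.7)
My plan is to establish flatness and faithful flatness of $R \hookrightarrow R^*$ separately (writing $R := k[x_1,\ldots,x_n]$ and $R^* := k[x_1,\ldots,x_n]_{\rm int}$), in each case reducing the statement to a uniform, field-independent bound in polynomial ring theory of the type first established by Hermann in 1926. The ultrafilter setup is designed precisely to convert such uniform bounds into algebraic statements about $R^*$, because the paper already identifies $R \subset R^*$ as the subring of bounded-degree internal polynomials.

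For flatness I would use the equational criterion: it suffices to show that every $R^*$-syzygy $(g_1,\ldots,g_r)$ of a tuple $(f_1,\ldots,f_r) \in R^r$ is an $R^*$-linear combination of $R$-syzygies. Given such data, fix representatives $f_i = [f_{i,m}]$ with $\deg f_{i,m} \leq d$ for almost all $m$ and $g_i = [g_{i,m}]$, so that $\sum_i g_{i,m} f_{i,m} = 0$ in $L_m[x_1,\ldots,x_n]$ for almost all $m$. Hermann's uniform syzygy bound supplies integers $\beta = \beta(d,n,r)$ and $N = N(d,n,r)$, both independent of the field, and syzygies $s^{(1)}_m,\ldots,s^{(N)}_m$ of $(f_{1,m},\ldots,f_{r,m})$ of degree at most $\beta$ which generate the whole syzygy module. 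Writing $(g_{1,m},\ldots,g_{r,m}) = \sum_{j=1}^N h^{(j)}_m s^{(j)}_m$ and passing to ultraproducts, each $[s^{(j)}_m]$ has bounded degree and hence lies in $R^r$, while $[h^{(j)}_m] \in R^*$; this exhibits $(g_1,\ldots,g_r)$ as an $R^*$-combination of $R$-syzygies, as required.

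For faithful flatness, given flatness, it remains to check that $\mathfrak{m} R^* \neq R^*$ for every maximal ideal $\mathfrak{m} \subseteq R$. Fix generators $\mathfrak{m} = (f_1,\ldots,f_r)$ of degree at most $d$, and suppose for contradiction that $1 = \sum g_i f_i$ in $R^*$. Passing to representatives yields $1 = \sum g_{i,m} f_{i,m}$ in $L_m[x_1,\ldots,x_n]$ for almost all $m$. Hermann's uniform ideal-membership bound then produces alternative $g'_{i,m}$ of degree at most $\beta' = \beta'(d,n,r)$ satisfying the same identity; the classes $[g'_{i,m}]$ are of bounded degree and therefore lie in $R$, forcing $1 \in \mathfrak{m}$, a contradiction. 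The ``in particular'' assertion $\fra_{\rm int} \cap R = \fra$ is the standard consequence of faithful flatness: applying $-\otimes_R R^*$ to $0 \to \fra \to R \to R/\fra \to 0$ shows that $R/\fra \to R^*/\fra R^*$ is injective, whence $\fra R^* \cap R = \fra$.

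The hard part is honestly the appeal to Hermann's uniform bounds, both for syzygies and for ideal membership, with explicit independence from the base field. Her constructive arguments do go through verbatim over any field and produce (doubly-exponential) bounds $\beta, \beta'$ that depend only on $d$, $n$, and $r$, so the uniformity is genuine; but a self-contained exposition must either reproduce those estimates or substitute the model-theoretic route of \cite{MR739626}, in which faithful flatness and the existence of uniform polynomial-ring bounds are established in tandem via an ultrafilter compactness argument, each feeding the other.
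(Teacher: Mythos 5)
The paper does not prove this statement; it is imported wholesale as \cite[Thm.~1.1]{MR739626}, so there is no internal proof to compare against. Your reconstruction is correct. Flatness via the equational criterion reduces, exactly as you say, to the existence of a field-independent degree bound $\beta(d,n,r)$ for generators of syzygy modules of tuples of polynomials of degree $\leq d$, and surjectivity of $\Spec R^* \to \Spec R$ reduces to the analogous uniform ideal-membership bound; both are classical (Hermann, Seidenberg) and hold over arbitrary fields, and the ultraproduct bookkeeping you describe transfers them correctly. One small point worth making explicit: the $[s^{(j)}_m]$ are syzygies of $(f_1,\ldots,f_r)$ \emph{over $R$}, not just over $R^*$, because $\sum_i [s^{(j)}_{i,m}]\,f_i$ is an element of $R$ that vanishes in $R^*$, and $R \hookrightarrow R^*$ is injective (this injectivity is what lets the paper identify $R$ with the bounded-degree subring of $R^*$, and you are relying on it implicitly). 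Your closing caveat is also accurate: van den Dries and Schmidt do not invoke Hermann as a black box but prove the uniform bounds and the faithful flatness of the nonstandard extension in tandem, each reinforcing the other, so the logical dependency is more circular in \cite{MR739626} than in your linearized sketch; but as a proof of the theorem given the classical bounds as input, your argument is sound.
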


This theorem implies that the ideals of $k[x_1,\dots,x_n]_{\rm int}$ generated in
bounded degree are in a one-to-one correspondence with the ideals of
$k[x_1,\dots,x_n]$, and hence they are all of the form $[\fra_m]$ for a sequence of
ideals $\fra_m\subseteq L_m[x_1,\dots,x_n]$, which are all generated in bounded
degree.

Given the fields $L_m$ for $m\in\N$ and assuming the above constructions, the symbol
$[X_m]$ stands for equivalence classes of sequences of schemes $X_m$ of finite type
over $L_m$ with respect to the equivalence relation: $[X_m]\sim [Y_m]$ iff $X_m=Y_m$
for almost all $m\in \N$. This $[X_m]$ is called an \emph{internal scheme} over $k$. An
\emph{internal morphism} $[f_m]$ is defined by (the equivalence class of) a sequence
of morphisms $f_m:X_m\to Y_m$ where as usual $[f_m]\sim[g_m]$ iff $f_m=g_m$ for
almost all $ m\in \N$. Similarly, an \emph{internal coherent sheaf} on an internal scheme
$[X_m]$, denoted by the symbol $[\FF_m]$, is defined as an equivalence class of
sequences of coherent sheaves $\FF_m$ on $X_m$ by the usual equivalence relation,
$[\FF_m]\sim[\GG_m]$ iff $\FF_m\simeq \GG_m$ for almost all $ m\in \N$, where $\FF_m$ and
$\GG_m$ are coherent sheaves on $X_m$ for almost all $m\in \N$.

\begin{claim}\label{constr:int-functor}
  There exists a functor $\sfint: X\mapsto X_{\rm int}$ from separated schemes of
  finite type over $k$ to internal schemes.
\end{claim}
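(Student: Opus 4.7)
The plan is to build the functor $\sfint$ first on affine schemes using Theorem~\ref{l-ultra}, then extend to separated finite-type $k$-schemes by gluing along an affine cover. Given an affine scheme $X=\Spec A$ of finite type over $k$, fix a presentation $A\cong k[x_1,\dots,x_n]/\fra$ with $\fra$ generated in bounded degree. By Theorem~\ref{l-ultra}, $\fra_{\rm int}$ is of the form $[\fra_m]$ for ideals $\fra_m\subseteq L_m[x_1,\dots,x_n]$. Set
\[
X_{\rm int}:=[\Spec L_m[x_1,\dots,x_n]/\fra_m].
\]
The first task is to check that this construction is independent of the chosen presentation. Any two presentations are related by a $k$-algebra isomorphism of two quotient rings $k[x_1,\dots,x_n]/\fra$ and $k[y_1,\dots,y_r]/\mathfrak{b}$, which is determined by the images of the generators, and these are polynomials of bounded degree. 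Lifting those bounded-degree formulas and invoking Theorem~\ref{l-ultra} (to see that the relations defining the inverse isomorphism also lift), one obtains isomorphisms at the level of the $L_m$-algebras for almost all $m$, hence an equality of internal schemes.

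Next, a $k$-morphism $f\colon \Spec A\to \Spec B$ corresponds to a $k$-algebra map $B\to A$ whose effect on a fixed finite set of $k$-algebra generators of $B$ is given by a finite list of bounded-degree polynomials in the generators of $A$; these polynomials lift to elements of the corresponding internal polynomial rings, and the relations needed to check that the lift descends to a map $L_m[y_1,\dots,y_r]/\mathfrak{b}_m\to L_m[x_1,\dots,x_n]/\fra_m$ hold modulo $\fra_m$ for almost all $m$ (again by Theorem~\ref{l-ultra} applied to the ideal of relations, which is bounded-degree). This produces an internal morphism $f_{\rm int}:=[f_m]$, and the same bounded-degree argument shows that composition is preserved.

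To extend to general separated finite-type $k$-schemes $X$, choose a finite affine open cover $\{U_i=\Spec A_i\}$. Because $X$ is separated, each intersection $U_i\cap U_j$ is affine, say $U_{ij}=\Spec A_{ij}$, and the inclusions $U_{ij}\hookrightarrow U_i$ are open immersions corresponding to localizations (or more generally to bounded-degree data). Applying the affine construction we obtain internal schemes $(U_i)_{\rm int}=[U_{i,m}]$, $(U_{ij})_{\rm int}=[U_{ij,m}]$ together with open immersions $(U_{ij})_{\rm int}\hookrightarrow (U_i)_{\rm int}$ coming from open immersions $U_{ij,m}\hookrightarrow U_{i,m}$ for almost all $m$. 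The cocycle identity and the openness of each immersion are statements about finitely many bounded-degree elements, so they hold for almost all $m$ simultaneously; hence the $U_{i,m}$ glue to a separated finite-type $L_m$-scheme $X_m$ for almost all $m$, and we set $X_{\rm int}:=[X_m]$. A morphism $X\to Y$ is handled by refining to a common affine cover and applying the affine case componentwise.

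The main technical obstacle throughout is the bookkeeping: every choice (presentation, generators of a morphism, affine refinement, gluing cocycle) must be verified to be absorbed by the ``almost all $m$'' quantifier so that the result is independent of those choices as an internal object. The key enabling fact is Theorem~\ref{l-ultra}: bounded-degree ideals and elements in $k[x_1,\dots,x_n]$ correspond bijectively to their internal counterparts, so any finite list of polynomial identities satisfied in $k[x_1,\dots,x_n]$ is satisfied by representatives in $L_m[x_1,\dots,x_n]$ for almost all $m$. Once this principle is applied systematically, functoriality of $\sfint$ on the affine level and compatibility with gluing follow, yielding the desired functor.
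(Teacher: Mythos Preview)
Your proposal is correct and follows essentially the same approach as the paper's construction: define $X_{\rm int}$ for affine $X$ via a presentation and the ideal correspondence coming from Theorem~\ref{l-ultra}, verify independence of presentation through the bounded-degree description of morphisms, and then glue along an affine cover to handle the general separated finite-type case. The only difference is that you spell out the bookkeeping (independence of presentation, functoriality on morphisms, cocycle conditions for gluing) in more detail, whereas the paper defers these verifications to \cite[pp.~1468--1469]{BHMM11}.
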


\begin{proof}[Construction]
  First assume that $X$ is affine and define $\sfint$ as follows: let
  $X\hookrightarrow \mathbb A^N_k$ be a closed embedding defined by the ideal $\fra
  \subset k[x_1,...,x_N]$. As observed above $\fra_{\rm int} = [\fra_m]$ for an
  appropriate sequence of ideals, and then we define $X_m$ in $\mathbb A^N_{ L_m}$ by
  the ideal $\fra_m$ (it is enough to do this for almost all $m\in \N$).  Now we set
  $X_{\rm int}:=[X_m]$.  A similar construction applies to morphisms (for details see
  \cite[pp.~1468-1469]{BHMM11}) which implies that the above defined $X_{\rm int}$ is
  independent of the embedding we chose at the beginning and hence the construction
  is functorial.  By patching on an open cover of $X$ we obtain $X_{\rm int}$ in the
  general case and similarly the same for morphisms.
\end{proof}

Similarly to the functor $X\mapsto X_{\rm int}$ one may also define a functor
$\FF\mapsto \FF_{\rm int}$ from coherent sheaves on $X$ to internal coherent sheaves
on $X_{\rm int}$. The construction is relatively straightforward; for details and
basic properties see \cite[pp.~1471-1472]{BHMM11}.

Notice that since the construction of the functor $X\mapsto X_{\rm int}$ is based on
the defining ideal sheaf of $X$ and hence for a divisor $D\subset X$, the internal
subscheme $D_{\rm int}\subset X_{\rm int}$ is a divisor with corresponding divisorial
sheaf $\OO_{X_{\rm int}}(D_{\rm int})\simeq [\OO_{X_m}(D_m)]$. In other words, for a
divisor, the corresponding \emph{internal divisor} may be obtained either as an
internal scheme or an internal coherent sheaf. By \cite[3.5(i)]{BHMM11} Cartier
divisors correspond to Cartier divisors. An \emph{internal pair} $( X, D)$
consists of an internal scheme $ X$ and an internal $\Q$-divisor $ D\subset 
X$. For a pair $(X,D)$ over k, we will use the notation $(X,D)_{\rm int}:=(X_{\rm
  int},D_{\rm int})$.

Next, we establish an important connection
between bounding the degree of a projective variety and bounding the degree of its
defining polynomials. This is, of course, related to the fascinating Eisenbud-Goto
conjecture \cite{MR741934}, but we only need a much weaker statement, which we prove
below.

\begin{proposition}\label{prop:degree-to-degree}
  There exists a function $\beta:\N\times\N\to\N$, such that if $d,q,n\in\N$, $L$ is
  a 
  field and $X\subseteq \P^n$ is a projective variety over $L$ such that $\dim X=q$
  and $\deg X\leq d$, then $I(X)\subseteq L[x_0,\dots,x_n]$ can be generated by
  homogenous polynomials of degree at most $\beta(d,q)$.
\end{proposition}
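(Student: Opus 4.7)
The plan is to proceed in three steps: first reduce the ambient dimension $n$ to one bounded in terms of $d$ and $q$, then bound the set of Hilbert polynomials that can occur, and finally invoke a uniform Castelnuovo--Mumford regularity bound on the resulting bounded family.

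For the reduction of $n$, let $\Lambda \subseteq \P^n$ be the linear span of $X$. The classical fact that an irreducible non-degenerate variety of dimension $q$ in $\P^N$ has degree at least $N-q+1$, applied component by component, yields $\dim \Lambda \leq N_0$ for some $N_0 = N_0(d,q)$. Choosing coordinates so that $\Lambda$ is the coordinate subspace $\{x_{N_0+1} = \dots = x_n = 0\}$, the ideal $I(X) \subseteq L[x_0,\dots,x_n]$ is generated by the $n-N_0$ linear forms $x_{N_0+1},\dots,x_n$ together with any lifts of generators of $I(X \subseteq \Lambda) \subseteq L[x_0,\dots,x_{N_0}]$. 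It therefore suffices to prove the statement with $n$ replaced by $N_0$, a quantity depending only on $d$ and $q$.

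For the finiteness of Hilbert polynomials, I would observe that for reduced subschemes $X \subseteq \P^{N_0}$ with $\dim X = q$ and $\deg X \leq d$, the Hilbert polynomial $P_X$ has degree $q$ with leading coefficient at most $d/q!$, and its values satisfy $0 \leq P_X(t) \leq \binom{t+N_0}{N_0}$ for $t$ sufficiently large. Combined with a rough a priori regularity estimate in terms of $d$, $q$, $N_0$ (obtained e.g.\ from Mumford's boundedness theorem via induction on dimension using generic hyperplane sections), this pins down $P_X$ from finitely many integer values and confines it to a finite set $\mathscr{P} = \mathscr{P}(d,q)$.

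For the regularity bound, for each $P \in \mathscr{P}$ the Hilbert scheme $\Hilb^P(\P^{N_0}_{\Z})$ is projective, hence of finite type, over $\Spec \Z$, so by upper semicontinuity the Castelnuovo--Mumford regularity of the universal ideal sheaf is bounded by some integer $r(P)$; alternatively Gotzmann's regularity theorem supplies an explicit, field-independent bound depending only on $P$. Since an $r$-regular ideal is generated in degrees at most $r$, one may take $\beta(d,q) := \max_{P \in \mathscr{P}} r(P)$. The main obstacle is to ensure uniformity in the field $L$, which is automatic from the definition of $\Hilb^P$ over $\Spec \Z$ and the field-independent nature of Gotzmann's estimate; a secondary technicality is handling possibly reducible or non-equidimensional $X$, which one treats by separating irreducible components (each with its own degree and dimension bounds) and recombining.
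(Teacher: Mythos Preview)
Your approach is valid but differs from the paper's. After the same first step bounding the ambient dimension (the paper gets $n\leq q+d-1$ directly), the paper avoids Hilbert polynomials and regularity entirely: it uses Mumford's observation that $X$ is set-theoretically the intersection of its joins $H_T$ with generic linear subspaces $T$ of dimension $n-q-2$, each $H_T$ being a hypersurface of degree at most $d$, to produce an ideal $J$ generated in degree $\leq d$ with $\sqrt{J}=I(X)$; the conclusion then follows from the van den Dries--Schmidt effective bound \cite[Theorem~2.10(ii)]{MR739626} for the degrees of generators of the radical. Your route through Castelnuovo--Mumford regularity and Hilbert schemes over $\Spec\Z$ is in the modern boundedness style and makes uniformity in the base field transparent, but note a redundancy: the ``rough a priori regularity estimate'' you invoke in step two (Mumford's induction on dimension via generic hyperplane sections) already yields a generation-degree bound for $I(X)$, so step three via Gotzmann is superfluous once step two is granted. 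The paper's argument is geometrically elementary but offloads the nontrivial content into the cited nonstandard-methods result on radicals.
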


\begin{proof}
  Note that if $\deg X\leq d$, then there exists a linear subspace $\P^{q+d-1}\simeq
  P\subseteq \P^n$ such that $X\subseteq P$ and hence we may assume that $n\leq
  q+d-1$. 
  We utilize an idea of Mumford: For any linear subspace $T\subseteq \P^n$ of
  dimension $n-q-2$ let $H_T$ denote the join of $X$ and $T$, i.e., the union of
  lines determined by pairs of points given by $X\times T$. Alternatively, $H_T$ may
  be defined as follows: consider the projection map $\pi_T:\P^n\dasharrow \P^{q+1}$
  and let $H_T:=\overline{\pi_T^{-1}{{\pi_T(X)}}}$.
  Clearly, $\deg H_T\leq \deg X\leq d$ and it is easy to see (cf.\ proof of
  \cite[Theorem~1,~p.32]{MR0282975}) that
  \[ 
  X \underset{\text{set-theoretically}}{=} 
  \bigcap_{T\cap X=\emptyset} H_T.
  \]
  It follows that there exists an ideal $J\subseteq L[x_0,\dots,x_n]$ that can be
  generated by homogenous polynomials of degree at most $d$ and such that $\sqrt J =
  I(X)$. 

  Now, the statement follows by \cite[Theorem~2.10(ii)]{MR739626}.
\end{proof}

\begin{proposition}\label{p-ultra} 
  Fix $d>0$ and let $L_m$ be a sequence of fields. Let $X_m$ be a sequence of
  projective varieties defined over $L_m$ of bounded degree and dimension for almost
  all $m\in \N$. Then there exists a projective variety $X$ defined over $k=[L_m]$
  such that $X_{\rm int}=[X_m]$.
  Furthermore, this $X$ admits an embedding to a projective space over $k$ such that
  its ideal sheaf is locally generated by polynomials of bounded degree.
\end{proposition}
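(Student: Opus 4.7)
The plan is to build $X$ by ultraproducing the defining ideals of the $X_m$. Let $q_0$ be a uniform bound on $\dim X_m$. By the argument in the proof of Proposition~\ref{prop:degree-to-degree}, each $X_m$ is contained in a linear subspace of $\P^{q_0+d-1}_{L_m}$, so after restricting coordinates we may assume $X_m\subseteq \P^N_{L_m}$ for $N=q_0+d-1$, for almost all $m$. Proposition~\ref{prop:degree-to-degree} then provides, for almost all $m$, a homogeneous prime ideal $\fra_m\subseteq L_m[x_0,\dots,x_N]$ with $X_m=V(\fra_m)$ and such that $\fra_m$ is generated by homogeneous polynomials of degree at most $\beta(d,q_0)$.

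Next, I would pass to the ultraproduct. The ideal $[\fra_m]\subseteq k[x_0,\dots,x_N]_{\rm int}$ is generated in bounded degree (independent of $m$), so by Theorem~\ref{l-ultra} and the bijection between bounded-degree ideals discussed after it, there is a unique homogeneous ideal $\fra\subseteq k[x_0,\dots,x_N]$ with $\fra_{\rm int}=[\fra_m]$, generated in degree at most $\beta(d,q_0)$. Define $X:=V(\fra)\subseteq \P^N_k$.

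The key verification is that $X$ is a projective \emph{variety}, i.e.\ that $\fra$ is a prime ideal. Suppose $f,g\in k[x_0,\dots,x_N]$ with $fg\in\fra$. Choosing bounded-degree lifts $f=[f_m]$, $g=[g_m]$, we have $f_mg_m\in\fra_m$ for almost all $m$, and primality of each $\fra_m$ forces $f_m\in\fra_m$ or $g_m\in\fra_m$. Since the ultrafilter $\UU$ is an ultrafilter, one of the two sets of indices lies in $\UU$; thus $f\in\fra_{\rm int}\cap k[x_0,\dots,x_N]=\fra$ or $g\in\fra$. The same argument, using that each $\fra_m$ is radical, shows $\fra$ is radical as well. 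Hence $X$ is integral.

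Finally, $X_{\rm int}=[X_m]$ follows immediately from the construction of the functor $\sfint$ in Claim~\ref{constr:int-functor}: taking the closed embedding $X\hookrightarrow \P^N_k$, its defining ideal is $\fra$ and $\fra_{\rm int}=[\fra_m]$, so by construction $X_{\rm int}$ is cut out in $\P^N_{L_m}$ by $\fra_m$, which is $X_m$ for almost all $m$. The final assertion about the ideal sheaf being locally generated by polynomials of bounded degree is then built into the construction, since $\fra$ is generated in degree $\leq\beta(d,q_0)$. I expect the only mildly delicate point to be the reduction to a uniform ambient $\P^N$ together with the verification that the ideal-level ultraproduct constructions commute with the scheme-theoretic ones; both are handled by Proposition~\ref{prop:degree-to-degree} and the functoriality established in Claim~\ref{constr:int-functor}.
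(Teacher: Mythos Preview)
Your argument is correct and follows essentially the same route as the paper: both use Proposition~\ref{prop:degree-to-degree} to pin down a uniform ambient $\P^N$ and bounded-degree generators, then invoke Theorem~\ref{l-ultra} to descend the ultraproduct ideal to $k$. The only cosmetic difference is that the paper works affine-chart-by-affine-chart and glues, whereas you work directly with the homogeneous ideal; your explicit verification that $\fra$ is prime is a nice addition the paper leaves implicit.
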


\begin{proof} 
  Since the $X_m$ have bounded degree and dimension (for almost all $m\in \N$), we
  may assume that there are fixed integers $d,N>0$ such that for almost all $m\in
  \N$, $X_m$ is embedded in $\mathbb P ^N_{L_m}$ with degree $\leq d$.  Let $\mathbb
  A^N_k =: U\subset \mathbb P ^N_k$ be a standard open affine subset and let $U_{\rm
    int}=[U_m]$ be the corresponding internal scheme.  Consider $X_m\cap U_m\subseteq
  U_m\simeq\mathbb A^N_{L_m}$ with defining ideal $\fra_m$. It follows from
  Proposition~\ref{prop:degree-to-degree} that the $\fra_m$ are generated by
  polynomials of bounded degree and then so is $\fra_{\rm int}=[\fra_m]\subseteq
  k[x_1,\dots,x_N]_{\rm int}$. Let $\fra:=\fra_{\rm int}\cap k[x_1,\dots,x_N]$ and
  $X^U=Z(\fra)\subseteq \mathbb A^N_k=U$. By the construction we have that $[X_m\cap
  U_m]=(X^U)_{\rm int}$. Glueing the various $X^U$ as before we obtain the required
  closed subscheme $X\subseteq \P _k^N$.
\end{proof}

\begin{lemma}\label{l-u} 
  Let $X\subset \P^N_k$ where $k=[L_m]$ and $\LL$ a semiample line bundle on $X$. If
  $X_{\rm int}=[X_m]$ and $\LL_{\rm int}=[\LL _m]$, then $\LL _m$ is semiample for
  almost all $m\in \N$.
\end{lemma}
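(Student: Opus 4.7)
The plan is to use semiampleness of $\LL$ to produce an explicit surjection between coherent sheaves on $X$, then apply the internal functor $\sfint$, and exploit that $\sfint$ preserves surjectivity (via the faithful flatness in Theorem~\ref{l-ultra}) to conclude that the corresponding map is surjective on almost all $X_m$.

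Since $\LL$ is semiample, there is an integer $r>0$ such that $\LL^{\otimes r}$ is globally generated. As $X$ is projective, $H^0(X,\LL^{\otimes r})$ is finite-dimensional, so we may pick finitely many sections $s_0,\dots,s_M\in H^0(X,\LL^{\otimes r})$ generating $\LL^{\otimes r}$; they assemble into a surjective $\OO_X$-linear map
\[
\phi\colon \OO_X^{M+1}\twoheadrightarrow \LL^{\otimes r}.
\]
By the compatibility of $\sfint$ with direct sums and tensor products of coherent sheaves \cite[3.5 and pp.~1471-1472]{BHMM11}, together with $\LL_{\rm int}=[\LL_m]$, we have $(\LL^{\otimes r})_{\rm int}=[\LL_m^{\otimes r}]$ and $(\OO_X^{M+1})_{\rm int}=[\OO_{X_m}^{M+1}]$, so $\phi_{\rm int}=[\phi_m]$ for morphisms
\[
\phi_m\colon \OO_{X_m}^{M+1}\lra \LL_m^{\otimes r}
\]
induced by sections $s_{i,m}\in H^0(X_m,\LL_m^{\otimes r})$, defined for almost all $m\in\N$.

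Faithful flatness (Theorem~\ref{l-ultra}) makes $\sfint$ exact on coherent sheaves, so the surjectivity of $\phi$ forces the internal cokernel $[\operatorname{coker}\phi_m]$ to vanish; by definition of the ultraproduct this means $\operatorname{coker}\phi_m=0$, equivalently $\phi_m$ is surjective, for almost all $m\in\N$. For any such $m$ the sections $s_{i,m}$ generate $\LL_m^{\otimes r}$, hence $\LL_m^{\otimes r}$ is globally generated and $\LL_m$ is semiample.

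The main technical obstacle is justifying the exactness of $\sfint$ on coherent sheaves and its compatibility with tensor powers of a line bundle. Locally $\sfint$ is modeled on base change along the faithfully flat extension $k[x_1,\dots,x_N]\hookrightarrow k[x_1,\dots,x_N]_{\rm int}$, which preserves both exactness and tensor products. The only non-routine step is the global bookkeeping: choosing an affine open cover of $X$ with local trivializations of $\LL^{\otimes r}$ whose defining data lie in bounded degree — so that they admit ultraproduct representatives in the sense of Proposition~\ref{p-ultra} — and then verifying that the resulting $\phi_m$ defines a morphism of coherent sheaves on $X_m$ for almost all $m\in\N$.
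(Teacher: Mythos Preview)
Your proof is correct and morally the same as the paper's, but you package the key step differently. The paper uses the morphism formulation: since $\LL^{\otimes r}$ is globally generated it defines $\phi:X\to\P^M_k$ with $\phi^*\OO_{\P^M_k}(1)\simeq\LL^{\otimes r}$; applying the functor $\sfint$ to the morphism $\phi$ and invoking \cite[3.5(iii)]{BHMM11} (compatibility with pullbacks) gives $\LL_m^{\otimes r}\simeq\phi_m^*\OO_{\P^M_{L_m}}(1)$ for almost all $m$, which is visibly globally generated. You instead work with the equivalent sheaf-theoretic data, the surjection $\OO_X^{M+1}\twoheadrightarrow\LL^{\otimes r}$, and appeal to exactness of $\sfint$ (via faithful flatness) rather than to compatibility with pullbacks. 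The paper's route is a bit cleaner because it cites a single off-the-shelf compatibility and avoids the bookkeeping you flag in your last paragraph; your route has the virtue of making explicit exactly which property of $\sfint$ is doing the work.
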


\begin{proof}
  Since $\LL$ is semiample, there is an integer $r>0$ such that $\LL^{\otimes r}$
  defines a morphism $\phi:X\to \p ^M_k$ with $\phi ^* \OO_{\p ^M_k}(1)\simeq
  \LL^{\otimes r}$. Since $(\p^M_k)_{\rm int}=[\p ^M_{L_m}]$ and $(\mathscr
  O_{\p^M_k}(1))_{\rm int}= [\OO _{\p ^M_{L_m}}(1)]$, it follows that $\LL
  _m^{\otimes r}= \phi _m^*(\OO _{\p ^M_{L_m}}(1))$ for almost all $m\in \N$ (where $(\phi
  )_{\rm int }=[\phi _m ])$ by \cite[3.5(iii)]{BHMM11}. Thus $\LL _m$ is semiample
  for almost all $m\in \N$.
\end{proof}

\begin{lemma}\label{l-um} 
  Let $(X,B)$ be a log canonical pair projective over $k=\bar k=[L_m]$ and let $\nu
  :X\dasharrow Y$ be a good minimal model (resp.\ log canonical model), then
  $(X_m,B_m)$ is a log canonical pair projective over $L_m$ and $\nu _m:X_m\dasharrow
  Y_m$ is a good minimal model (resp.\ log canonical model) for $(X_m,B_m)$ for
  almost all $m\in \N$, where $\nu _{\rm int}=[\nu _m]$, $Y_{\rm int}=[Y_m]$ and
  $B_{\rm int}=[B_m]$.
\end{lemma}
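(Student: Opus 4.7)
The plan is to show that each defining property of a log canonical pair and of a good minimal model / log canonical model can be checked by finitely many pieces of geometric data (a resolution, a divisor identity, a linear system), each of which descends to almost all $m$ via the functors $X\mapsto X_{\rm int}$ and $\FF\mapsto \FF_{\rm int}$ together with the fact (\cite[3.5]{BHMM11}) that Cartier divisors correspond to Cartier divisors under internalization.

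First I would handle the log canonical assertion. Choose a log resolution $\pi: W\to X$ of $(X,B)$ with exceptional divisors $E_i$ and write the discrepancy identity
\[
K_W+\pi^{-1}_*B+\sum E_i = \pi^*(K_X+B)+\sum (a_i+1)E_i,
\]
where $a_i\geq -1$ by the log canonical hypothesis. Apply the internal functor to $W$, $\pi$, and each divisorial component. By Proposition~\ref{p-ultra}, $W_{\rm int}=[W_m]$ and $\pi_{\rm int}=[\pi_m]$ are induced by projective birational morphisms $\pi_m:W_m\to X_m$ for almost all $m$; simple normal crossings is a local algebraic condition that descends to almost all $m$; the identity above internalizes (the coefficients $a_i$ are specific rational numbers that transfer identically) and hence holds for almost all $m$. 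Thus $(X_m,B_m)$ is log canonical.

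Next, I would verify that $\nu_m:X_m\dasharrow Y_m$ is a minimal model for $(X_m,B_m)$ for almost all $m$. Resolve $\nu$ by a common log resolution $p:W\to X$, $q:W\to Y$ and push it through the internal functor, obtaining $p_m, q_m$ for almost all $m$. The condition that $\nu$ is a birational contraction (i.e., every $\nu^{-1}$-exceptional divisor on $Y$ is $q$-exceptional but not $p$-exceptional, or rather: $\nu$ extracts no divisors) is encoded by the exceptional loci of $p$ and $q$, which are divisors determined by the defining ideals and therefore internalize correctly. The discrepancy inequalities $a(E;X,B)\leq a(E;Y,B_Y)$ for all $\nu$-exceptional prime divisors $E$ are numerical identities of rational numbers coming from the divisorial equations on $W$, hence descend to almost all $m$. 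Therefore $\nu_m$ is a birational contraction with the requisite discrepancy properties for almost all $m$, so $(Y_m,B_{Y,m})$ is a minimal model for $(X_m,B_m)$.

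For goodness (resp.\ log canonical model), I would apply Lemma~\ref{l-u}: since $K_Y+B_Y$ is semiample, some $r(K_Y+B_Y)$ is a semiample line bundle, so for almost all $m$ the bundle $r(K_{Y_m}+B_{Y,m})$ is semiample; as $r$ is a fixed integer, this gives semiampleness of $K_{Y_m}+B_{Y,m}$. For the log canonical model case, ampleness of $K_Y+B_Y$ means that for some $r$ the linear system $|r(K_Y+B_Y)|$ realizes a closed embedding $Y\hookrightarrow \P^N_k$; internalization of this embedding yields closed embeddings $Y_m\hookrightarrow \P^N_{L_m}$ induced by $|r(K_{Y_m}+B_{Y,m})|$ for almost all $m$, so $K_{Y_m}+B_{Y,m}$ is ample.

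The main subtlety I expect is bookkeeping for $\Q$-factoriality of $Y$ (should the ambient definition of minimal model demand it): one must verify that for every prime Weil divisor $D$ on $Y$, some integer multiple is Cartier, and that this property transfers to $Y_m$. This should follow because $\Q$-factoriality of $Y$ gives, for each $D$, a specific integer $N_D$ and a Cartier structure on $N_D\cdot D$ — and the internalization of this Cartier structure produces a Cartier structure on $N_D\cdot D_m$ via \cite[3.5]{BHMM11}. Since $Y$ has finitely many orbits of prime divisors that matter (classes in a finitely generated divisor class group, up to the birational picture captured by $W$), this is a finite check and hence survives for almost all $m$.
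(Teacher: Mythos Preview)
Your argument is correct and follows essentially the same route as the paper: transfer the log resolution and its discrepancy identity to get log canonicity, resolve $\nu$ by a common roof $p:W\to X$, $q:W\to Y$ and transfer the discrepancy comparison, and invoke Lemma~\ref{l-u} for semiampleness. The paper phrases the minimal-model check as the single inequality $p^*(K_X+B)\geq q^*(K_Y+B_Y)$, strict along $\nu$-exceptional divisors, which is equivalent to your discrepancy inequalities; it cites \cite[Proof of 4.1]{BHMM11} for the fact that log resolutions internalize, and does not discuss $\Q$-factoriality at all. Your final paragraph on $\Q$-factoriality is therefore unnecessary here, and in any case the ``finite check'' justification is not quite right as stated: having a finitely generated class group would suffice, but you have not established that the class group of $Y_m$ is generated by the internalizations of a fixed finite set of divisors on $Y$.
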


\begin{proof} 
  Since $r(K_X+B)$ is Cartier, so is $r(K_{X_m}+B_m)$ for almost all $m\in \N$ by
  \cite[3.5(i)]{BHMM11}. Let $\mu :X'\to X$ be a log resolution of $(X,B)$, then
  following \cite[Proof of 4.1]{BHMM11}, $\mu _m:X'_m\to X_m$ is a log resolution for
  almost all $m\in \N$ and $(K_{X'/X})_{\rm int}=[K_{X'_m/X_m}]$. Since the
  coefficients of $\mathbf A_{X'}=K_{X'}-\mu^*(K_X+B)$ are $\geq -1$, the same holds
  for the coefficients of $\mathbf A_{X'_m}=K_{X'_m}-\mu _m^*(K_{X_m}+B_m)$ for
  almost all $m\in \N$ (cf.\ \cite[\S 3]{BHMM11}).  We only discuss the case of good
  minimal models (the other case is very similar). By assumption $K_Y+B_Y$ is
  semiample where $B_Y=\nu _*B$. By Lemma \ref{l-u}, $K_{Y_m}+B_{Y_m}$ is semiample
  for almost all $m\in \N$. Let $p:W\to X$ and $q:W\to Y$ resolve $\nu$, then
  $p^*(K_X+B)\geq q^*(K_Y+B_Y)$ where the inequality is strict along
  $\nu$-exceptional divisors on $Y$.  But then, if $p_m:W_m\to X_m$ and $q_m:W_m\to
  Y_m$ are the corresponding morphisms, we have $p_m^*(K_{X_m}+B_m)\geq
  q_m^*(K_{Y_m}+B_{Y_m})$ where the inequality is strict along $\nu_m$-exceptional
  divisors on $Y_m$.
\end{proof}

\subsection{Effective Matsusaka and birational boundedness} 

We begin by recalling the following effective version of Matsusaka's theorem and a
vanishing theorem due to di Cerbo, Fanelli and Terakawa.

\begin{theorem}\label{T-van} Let $X$ be a smooth surface and $D$ be a big and nef
  Cartier divisor. Let $q_0:=(2{\rm vol}(K_X)+9)/(p-1)$, then $H^i(\mathscr O
  _X(K_X+qD))=0$ for all $i>0$ and $q> q_0$.
\end{theorem}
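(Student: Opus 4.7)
The plan is to follow the positive-characteristic vanishing strategy of Terakawa and di Cerbo--Fanelli, based on Frobenius amplification and Bogomolov instability. Since the bound $q_0$ depends essentially on $(p-1)^{-1}$, no reduction to characteristic zero is possible, and the Frobenius morphism $F\colon X\to X$ must enter the argument nontrivially.

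The case $i=2$ is immediate from Serre duality: $H^2(X,\OO_X(K_X+qD))\cong H^0(X,\OO_X(-qD))^{\vee}$, which vanishes because $D$ is big and nef and $q>0$. So the real content is the case $i=1$, which I would prove by contradiction. A nonzero class in $H^1(X,\OO_X(K_X+qD))$ corresponds, via Serre duality, to a nonsplit extension
\[
0\to \OO_X(-qD)\to \mathcal{E}\to \OO_X\to 0
\]
with Bogomolov discriminant $c_1(\mathcal{E})^2-4c_2(\mathcal{E})=q^2D^2>0$. Pulling back by the $n$-th iterate of Frobenius scales the discriminant by $p^{2n}$, so for $n\gg 0$ the bundle $F^{n*}\mathcal{E}$ is forced to be Bogomolov unstable and admits a destabilizing line subsheaf $\OO_X(M_n)$ satisfying $(2M_n+p^nqD)^2\geq p^{2n}q^2D^2$ with $2M_n+p^nqD$ in the positive cone.

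From here I would do a case analysis on the composition $\OO_X(M_n)\hookrightarrow F^{n*}\mathcal{E}\to F^{n*}\OO_X=\OO_X$. If it is zero, $\OO_X(M_n)$ factors through $F^{n*}\OO_X(-qD)=\OO_X(-p^nqD)$ and the Hodge index theorem, combined with the destabilizing inequality, yields a direct numerical contradiction with $q>q_0$. If it is nonzero, $M_n$ is effective and intersecting the instability inequalities with $D$ and $K_X$ produces bounds on $M_n\cdot D$ and $M_n^2$ in terms of $K_X^2=\vol(K_X)$. Normalizing by $p^n$ and letting $n\to\infty$, the factor $(p-1)^{-1}$ in $q_0$ emerges naturally from the geometric series $\sum_{i\geq 0}p^{-i}$ that appears when one compares $F^{n*}K_X=p^nK_X$ to the cumulative $(p^n-1)/(p-1)\cdot K_X$ correction coming from $\omega_{X/X^{(n)}}$.

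The main obstacle is the numerical bookkeeping that yields exactly the constant $2\vol(K_X)+9$ in the numerator of $q_0$: the crude Bogomolov analysis above gives an estimate of the right shape but with a larger additive constant, and extracting the sharp value requires the Reider-type refinement (careful analysis of the exceptional locus of the destabilizing divisor and of its contraction) carried out in \cite{Ter99} and \cite{dCF15}. Since the theorem is fully established in those references, in the actual write-up I would simply cite them rather than reproduce the optimization.
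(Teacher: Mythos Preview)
The paper's own proof of this theorem is simply the sentence ``See \cite{Ter99} and \cite[5.7]{dCF15}.''  Your proposal correctly identifies this and, in its final paragraph, arrives at exactly the same conclusion: cite the two references rather than reproduce the optimization.  The sketch you give of the Terakawa/di~Cerbo--Fanelli argument (Serre duality for $i=2$; for $i=1$, a nonsplit extension with positive Bogomolov discriminant, Frobenius pullback to force instability, then a Reider-type analysis of the destabilizing subsheaf) is an accurate outline of what those references do, so nothing is missing.
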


\begin{proof} 
  See \cite{Ter99} and \cite[5.7]{dCF15}.
\end{proof}

\begin{theorem}\label{dCFT} 
  Let $X$ be a smooth surface and $D$ be a big and nef Cartier divisor. Let $l=D^2-5$
  (resp.\ $l=D^2-9$) with $l\geq 0$ then if $|K_X+D|$ has a base point at $x\in X$
  (resp.\ $|K_X+D|$ does not separate points $x,y\in X$) then
  \begin{enumerate}
  \item If $X$ is not of general type nor quasi-elliptic of Kodaira dimension $1$,
    then there exists a curve $C\subset X$ containing $x$ (resp.\ containing one of
    the points $x,y$) such that $D\cdot C<2$ (resp.\ $D\cdot C<4$),
  \item If $X$ is of general type with $D^2>{\rm vol}(K_X)+6$ (resp.\ $D^2>{\rm
      vol}(K_X)+9$) or is quasi-elliptic of Kodaira dimension $1$ then there exists a
    curve $C\subset X$ containing $x$ (resp.\ containing one of the points $x,y$) such
    that $D\cdot C\leq 7$ (resp.\ $D\cdot C\leq 17$).
  \end{enumerate}
\end{theorem}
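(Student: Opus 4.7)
The plan is to follow Reider's strategy adapted to positive characteristic by Terakawa \cite{Ter99} and di Cerbo--Fanelli \cite{dCF15}: starting from a base point (resp.\ from a failure of separation) one constructs via a Serre--type extension a rank $2$ vector bundle of positive discriminant, and then one extracts the desired curve $C$ from a Bogomolov destabilization.

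First I would set up the Serre construction. Let $Z \subset X$ be the reduced subscheme equal to $\{x\}$ in the base-point case, and to $\{x,y\}$ in the separation case. Combining the hypothesis with the vanishing in Theorem~\ref{T-van} (and standard manipulations for adjoint vanishing in positive characteristic) yields a non-zero class in $\ext^1(\mathscr{I}_Z(D), \mathscr{O}_X)$ satisfying the Cayley--Bacharach condition, hence a locally free extension
\[
0 \to \mathscr{O}_X \to \mathscr{E} \to \mathscr{I}_Z \otimes \mathscr{O}_X(D) \to 0,
\]
with $c_1(\mathscr{E}) = D$ and $c_2(\mathscr{E}) = |Z|$. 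The assumption $l \geq 0$ then gives $c_1^2 - 4c_2 = D^2 - 4|Z| \geq l+1 > 0$, so $\mathscr{E}$ has strictly positive Bogomolov discriminant.

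Second, I would apply Bogomolov instability to produce a destabilizing sub-line-bundle $M \hookrightarrow \mathscr{E}$ with $(2M-D)^2 > 0$ and $(2M-D)\cdot H > 0$ for every ample $H$. Composing $\mathscr{O}_X \to \mathscr{E}$ with the projection $\mathscr{E}\to \mathscr{E}/M$ gives a section of $\mathscr{E}/M$ vanishing on $Z$, whose zero divisor is an effective curve $C \in |D-M|$ containing $x$ (resp.\ containing $x$ or $y$). Using $c_2(\mathscr{E}) = M \cdot (D-M) + \deg(Z)$ together with the positivity inequalities $(2M-D)^2 > 0$ and $(2M-D)\cdot D > 0$, and the nefness and bigness of $D$, a short numerical computation yields $D \cdot C = D \cdot (D-M) < 2$ in the base-point case and $< 4$ in the separation case, which is part (1).

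The main obstacle is part (2), where $X$ is of general type or quasi-elliptic of Kodaira dimension one. In positive characteristic Bogomolov's theorem can fail for such surfaces because Frobenius pullback can destabilize a semistable sheaf (Shepherd--Barron). The hypothesis $D^2 > \vol(K_X) + 6$ (resp.\ $+9$) is exactly what is needed to compensate for the Frobenius contribution: by applying a suitable iterate of Frobenius and bounding the resulting error in terms of $K_X$, one recovers a destabilizing sub-bundle of a Frobenius pullback of $\mathscr{E}$, at the cost of an extra factor from the Frobenius degree that weakens the conclusion to $D\cdot C \leq 7$ (resp.\ $\leq 17$). This Frobenius bookkeeping is the delicate part of the argument and is the content of \cite{Ter99} and \cite{dCF15}.
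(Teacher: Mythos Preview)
The paper does not prove this statement: its proof is the single line ``See the main theorem of \cite{Ter99} and \cite[4.9, 4.11]{dCF15}.'' Your proposal goes beyond this and sketches the actual content of those references, and it does so accurately: the Reider--Serre construction of a rank-two bundle with positive Bogomolov discriminant from the failure of $|K_X+D|$, Bogomolov destabilization to extract the curve $C$ in case (1) (where Bogomolov's inequality is known to hold in positive characteristic), and the Frobenius-corrected version of Bogomolov in case (2), with the extra hypothesis $D^2>\vol(K_X)+6$ (resp.\ $+9$) absorbing the Frobenius defect. So as far as comparison with the paper goes, there is nothing more to say: you and the paper both defer to \cite{Ter99} and \cite{dCF15}, and you have correctly identified what happens inside those references.

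Two minor corrections to your sketch. First, the nontrivial extension class in $\ext^1(\mathscr I_Z(D),\mathscr O_X)$ comes directly, via Serre duality, from the hypothesis that $|K_X+D|$ has a base point at $x$ (resp.\ fails to separate $x,y$); Theorem~\ref{T-van} is not needed for this step. Second, the Chern-class relation you want is $M\cdot(D-M)\leq \deg Z$, obtained from $c_2(\mathscr E)=M\cdot(D-M)+\mathrm{length}(W)$ with $W$ the residual zero-scheme of the quotient, not $c_2(\mathscr E)=M\cdot(D-M)+\deg Z$ as written. Neither affects the structure of the argument.
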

\begin{proof} 
  See the main theorem of \cite{Ter99} and \cite[4.9, 4.11]{dCF15}.
\end{proof}

\numberwithin{equation}{subsection}

\begin{corollary}\label{c-b} 
  Let $X$ be a normal surface and $D$ a nef and big Cartier divisor such that
  $D^2\geq {\rm vol}(K_X)$, then $|K_X+qD|$ defines a birational morphism for any
  $q\geq 18$.
\end{corollary}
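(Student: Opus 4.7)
The plan is to pass to a resolution of singularities $f\colon Y\to X$ and reduce the problem to the smooth case, where we can apply Theorem~\ref{dCFT}. I set $\widetilde D := f^*D$, which is a nef and big Cartier divisor on $Y$ with $\widetilde D^2=D^2$. Since $X$ is normal, the natural inclusion $f_*\OO_Y(K_Y)\hookrightarrow \OO_X(K_X)$ together with the projection formula gives $f_*\OO_Y(K_Y+q\widetilde D)\hookrightarrow \OO_X(K_X+qD)$, so every global section of $K_Y+q\widetilde D$ on $Y$ yields one of $K_X+qD$ on $X$. Consequently, it suffices to show that $|K_Y+q\widetilde D|$ defines a birational rational map on $Y$: the induced subsystem of $|K_X+qD|$ on $X$ then defines the same map composed with the birational morphism $f$, and birationality of a map defined by a sublinear system is inherited by any containing system. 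Moreover $\vol(K_Y)\leq \vol(K_X)\leq D^2=\widetilde D^2$, so the volume hypothesis transfers to $Y$.

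Next I apply Theorem~\ref{dCFT} to $Y$ with the divisor $q\widetilde D$ for $q\geq 18$. Since $\widetilde D^2$ is a positive integer, $(q\widetilde D)^2=q^2 D^2\geq 324$, comfortably exceeding both $5$ and $9$, so the hypothesis $l\geq 0$ is satisfied in both the base-point and separation parts. For the volume condition in case~(2) I must check $(q\widetilde D)^2>\vol(K_Y)+9$; I argue by dichotomy. If $\vol(K_Y)>9/323$, then $(q\widetilde D)^2\geq q^2\vol(K_Y)=324\,\vol(K_Y)>\vol(K_Y)+9$. If $\vol(K_Y)\leq 9/323$, then using $\widetilde D^2\geq 1$, we have $(q\widetilde D)^2\geq 324>\vol(K_Y)+9$. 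Hence the volume condition always holds.

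To finish, suppose for contradiction that $|K_Y+q\widetilde D|$ has a base point at a very general $y\in Y$, or fails to separate a very general pair $y,y'\in Y$. By Theorem~\ref{dCFT} there is an irreducible curve $C\subset Y$ through $y$ (resp.\ through one of $y,y'$) with $q\widetilde D\cdot C\leq 17$. Since $q\geq 18$ and $\widetilde D$ is Cartier with $C$ integral on a smooth surface, $\widetilde D\cdot C$ is a non-negative integer strictly less than $1$, hence $\widetilde D\cdot C=0$. But because $\widetilde D$ is nef and big on a smooth projective surface, the irreducible curves $C$ with $\widetilde D\cdot C=0$ form a finite set (they lie in a proper closed subset by the Hodge index theorem). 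This contradicts the generality of $y,y'$, so $|K_Y+q\widetilde D|$ defines a birational rational map, and by the first paragraph so does $|K_X+qD|$.

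I expect the main obstacle to be the uniform verification of the volume inequality $(q\widetilde D)^2>\vol(K_Y)+9$ across the dichotomy in Theorem~\ref{dCFT}; this is exactly what forces the specific choice $q=18$. The key numerical point is that $q^2-1=323$ is large enough to dominate $9$ whenever $\vol(K_Y)>9/323$, while $q^2=324$ alone suffices in the remaining small-volume regime.
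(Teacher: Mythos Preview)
Your proof is correct and follows essentially the same route as the paper: pass to a resolution, pull back $D$, and apply Theorem~\ref{dCFT} at general points, noting that curves with $\widetilde D\cdot C=0$ are finitely many so cannot pass through a general point. One minor remark: your dichotomy for the volume inequality is more elaborate than necessary, since $(q\widetilde D)^2-\vol(K_Y)\geq (q^2-1)\widetilde D^2\geq 323>9$ directly; and the genuine reason $q=18$ is forced is the bound $q\widetilde D\cdot C\leq 17$ in case~(2), not the volume check (which would hold already for $q\geq 4$).
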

\begin{proof} Let $\mu :X'\to X$ be a resolution and $D'=\mu ^* D$. Pick a general
  point $x\in X'$ (resp.\ general points $x,y\in X'$).  If $C$ is a curve on $X'$
  containing $x$ (resp.\ containing $x$ or $y$), then $qD'\cdot C\geq q\geq 18$ and
  $(qD')^2=(qD)^2\geq {\rm vol}(K_X) +10\geq {\rm vol}(K_{X'}) +10$. By Theorem \ref{dCFT}, $x$ is not a base
  point of $|K_{X'}+qD'|$ and $|K_{X'}+qD'|$ separates $x$ and $y$. Thus $|K_X+qD|$ defines a birational morphism.
\end{proof}

We will also need the following result which is analogous to \cite[3.1]{HMX13}.

\begin{theorem}\label{t-bb} 
  Fix $A\in \N$, $\delta >0$. Let $(X,B)$ be a log canonical surface such that the
  coefficients of $B$ are $\geq \delta$, ${\rm vol}(q(K_X+B))\leq A$ and
  $|K_X+q(K_X+B)|$ is birational for some $q>0$, 
  then $(X,B)$ is log birationally degree bounded.
\end{theorem}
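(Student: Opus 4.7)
Plan. Set $L := K_X + q(K_X+B)$. Since $L+B = (q+1)(K_X+B)$ with $B \ge 0$, we have $\vol(L) \le ((q+1)/q)^2\,\vol(q(K_X+B)) \le 4A$ for $q \ge 1$. The strategy, modeled on \cite[Theorem~3.1]{HMX13}, is to realize $(X,B)$ as a birational modification of a log pair belonging to a degree-bounded family. Choose a log resolution $\phi : Y \to X$ that also resolves the base locus of $|L|$, decompose $\phi^*L = M + F$ with $M$ the base-point-free mobile part and $F \ge 0$ the fixed part, and let $\psi : Y \to Z \subseteq \p^N$ be the birational morphism defined by $|M|$. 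Put $H := \psi^*\OO_{\p^N}(1)$. Then $H^2 = M^2 \le \vol(L) \le 4A$, and $N+1 = h^0(L)$ is bounded in terms of $A$ (via Riemann--Roch on a resolution combined with Theorem~\ref{T-van}), so Proposition~\ref{prop:degree-to-degree} places $(Z,H)$ in a degree-bounded family of polarized surfaces.

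Let $E_\phi$ denote the reduced sum of the $\phi$-exceptional prime divisors, put $D_Y := \phi^{-1}_*B_{\rm red} + E_\phi$, and set $D_Z := \psi_*D_Y$. With $f := \phi\circ\psi^{-1} : Z \dasharrow X$, the reduced divisor $D_{Z,{\rm red}}$ contains the strict transform of $B_{\rm red}$ together with every $f$-exceptional divisor (any $f$-exceptional divisor on $Z$ is the image under $\psi$ of a $\phi$-exceptional prime divisor that is not $\psi$-contracted), so $(Z, D_Z)$ together with $f$ log birationally dominates $(X,B)$ in the sense of Definition~\ref{d-bb}. It therefore suffices to bound $H\cdot D_{Z,{\rm red}} \le H\cdot D_Z = M \cdot D_Y$.

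For any $\phi$-exceptional prime $E \not\subseteq \Supp(F)$ one has $M \cdot E = \phi^*L \cdot E - F \cdot E = -F \cdot E \le 0$, and the nefness of $M$ forces $M \cdot E = 0$. So the only exceptional contributions to $M \cdot D_Y$ come from the exceptional components of $F$, and these are controlled in terms of $M^2 \le 4A$ by a standard negativity/counting argument on the exceptional locus of $\phi$. For the main term, after possibly blowing up further so that no component of $\phi^{-1}_*B_{\rm red}$ lies in $\Supp(F)$, the inequality $M \le \phi^*L$ and the projection formula give $M\cdot\phi^{-1}_*B_{\rm red} \le L\cdot B_{\rm red}$, and using $B \ge \delta B_{\rm red}$ together with the nefness of $M$ on $Y$ we even obtain $M\cdot\phi^{-1}_*B_{\rm red} \le \delta^{-1}\,L\cdot B$.

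The principal obstacle is bounding $L \cdot B$ uniformly in $q$. To this end, one runs a $(K_X+B)$-MMP to reach a minimal model $(X',B')$ with $K_{X'}+B'$ nef and big and $\vol(K_{X'}+B') = \vol(K_X+B) \le A/q^2$. Shokurov's log adjunction (Lemma~\ref{l-sh}) along each component $C \subseteq B'_{\rm red}$ expresses $(K_{X'}+B')\cdot C$ as $\deg_{C^\nu}(K_{C^\nu}+\Diff_{C^\nu}) \ge -2$; Hodge index on $X'$ then controls $q(K_{X'}+B')\cdot C$ in terms of $\sqrt{A}$; and a separate control of $K_{X'}\cdot C$ via log adjunction and the coefficient bound $\delta$ assembles into a bound on $L\cdot B$ depending only on $A$ and $\delta$, completing the proof.
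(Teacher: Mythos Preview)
Your setup through the bound $H^2\le 4A$ is fine and parallels the paper. The genuine gap is the final step, where you propose to bound $L\cdot B$ (equivalently $H\cdot D_{Z,{\rm red}}$) via adjunction and Hodge index on a minimal model. Neither tool gives what you need. Adjunction yields $(K_{X'}+B')\cdot C = 2g(C^\nu)-2+\deg\Diff_{C^\nu}(B'-C)$, which is bounded \emph{below} by $-2$ but has no a~priori upper bound: nothing in the hypotheses prevents a component of $B'$ from having large geometric genus. Hodge index on a nef and big class $D$ with $D^2>0$ gives $(D\cdot C)^2\ge D^2\cdot C^2$, again a \emph{lower} bound on $D\cdot C$ (and useless when $C^2\le 0$). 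So ``Hodge index controls $q(K_{X'}+B')\cdot C$ in terms of $\sqrt A$'' is simply false as stated, and the argument stalls here. There are two smaller problems as well: the sentence ``after possibly blowing up further so that no component of $\phi^{-1}_*B_{\rm red}$ lies in $\Supp(F)$'' is meaningless, since a fixed component of a linear system remains fixed on any blow-up; and the ``standard negativity/counting argument'' for the exceptional contribution to $M\cdot D_Y$ is not a standard argument and you have not indicated how $M^2$ alone would control it.

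The paper's proof bypasses all of this with a single cohomological lemma (Lemma~\ref{l-in}): for $L=2\alpha M$ with $\alpha$ large enough (in terms of $\vol(K_X)$, hence of $A$) and $D$ a reduced divisor, one has $D\cdot L\le 4\,\vol(K_X+D+L)$. This is proved by a surjectivity statement coming from the vanishing of Theorem~\ref{T-van}, applied to the restriction sequence for $D$. One then absorbs $K_X+D_0+2\alpha G$ into a bounded multiple of $K_X+B$ using $\delta D_0\le B$ and $G+E\sim_{\mathbb Q}K_X+q(K_X+B)\le (q+1)(K_X+B)$, obtaining a bound of the form $16(1+1/\delta+2\alpha)^2 A$. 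The point is that the degree of $B_{\rm red}$ is controlled by a \emph{volume}, not by an intersection-theoretic inequality; your proposal lacks any substitute for this step.
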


\begin{proof} 
  The proof follows \cite[3.1]{HMX13}. For the convenience of the reader we include a
  sketch which highlights the main changes necessary to avoid the use of
  Kawamata-Viehweg vanishing.

  By a standard reduction (cf.~\cite[3.1]{HMX13}) we may assume that $X$ is smooth
  and $|K_X+q(K_X+B)|$ induces a morphism $\phi :X\to Z$, i.e.,
  $|K_X+q(K_X+B)|=|M|+E$ where $|M|$ is basepoint-free.  Let $H$ be a very ample
  Cartier divisor on $Z$ so that $M=\phi ^* H$.  It suffices to show that $H^2$ and
  $\phi _*B_{\rm red}\cdot H$ are bounded from above.

  Clearly \[H^2=M^2={\rm vol}(M)\leq {\rm vol}(K_X+q(K_X+B))\leq (q+1)^2{\rm
    vol}(K_X+B)\leq 2^2A.\] Let $D_0$ be the sum of the components of $B$ that are
  not $\phi$-exceptional.  Note that if $G\in |M|$, then there is an effective
  $\Q$-divisor $C=E+B-\delta D_0\geq 0$ such that
  \begin{equation}\label{e-v}
    \delta D_0+G+C\sim _{\Q}(q+1)(K_X+B).
  \end{equation}
  Let $\alpha = 2A+10$. Since $B\geq 0$ and $q>0$, 
  \[
  {\rm vol} (q(K_X+B))\geq {\rm vol}(K_X+B)\geq {\rm vol }(K_X)
  \] 
  and so $\alpha \geq 2{\rm vol}(K_X)+10$.  Then
  \begin{equation}
    \begin{gathered}
      \phi _*D_0\cdot H=D_0\cdot G\leq D_0\cdot 2\alpha G\leq 4{\rm
        vol}(K_X+D_0+2\alpha  G)\leq  \\  \leq
      4{\rm vol}\left(\left(1+\left(\frac 1 \delta
            +2\alpha\right)(q+1)\right)(K_X+B)\right)\leq 16 \left(1+\frac 1 \delta
        +2\alpha \right)^2 A.
    \end{gathered}
  \end{equation} 
  Here the first (in)equality follows as $G\sim M=\phi ^*H$, the second is trivial, 
  the third by Lemma \ref{l-in} below, the fourth by \eqref{e-v}, and
  the fifth since $\frac {1+(1/\delta +2\alpha)(q+1)}q\leq 2( 1+1/\delta +2\alpha )
  $.
\end{proof}

\begin{lemma}\label{l-in} 
  Let $X$ be a normal surface, $M$ a Cartier divisor such that $|M|$ is base point
  free and the induced map $\phi =\phi _{|M|}:X\to Z$ is birational. Let $L=2\alpha
  M$ for some integer $\alpha \geq {\rm vol}(K_X)+10$ and $D$ be a sum of distinct
  prime divisors, then
  \[D\cdot L \leq 4 {\rm vol}(K_X+D+L).\]
\end{lemma}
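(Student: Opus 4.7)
The plan is to pass to a smooth model and extract the volume bound from an iterated restriction to $D$, with the crucial cohomological input provided by Theorem~\ref{T-van}.

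First, let $\mu\colon X'\to X$ be the minimal resolution. Set $M'=\mu^*M$, $L'=\mu^*L=2\alpha M'$, and $D'=\mu^{-1}_*D$. By the projection formula $D'\cdot L'=D\cdot L$, and by minimality of $\mu$ we have $K_{X'}=\mu^*K_X-E$ with $E\ge0$ exceptional, so
\[
K_{X'}+D'+L'\le \mu^*(K_X+D+L),
\]
which forces $\vol(K_{X'}+D'+L')\le \vol(K_X+D+L)$ as well as $\vol(K_{X'})\le \vol(K_X)$. Consequently $\alpha\ge\vol(K_{X'})+10$, and it suffices to prove the inequality on the smooth surface $X'$.

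Next, Theorem~\ref{T-van} applied to the big and nef Cartier divisor $M'$ with multiplier $q=2n\alpha$ (for any $n\ge 1$) yields $H^i(X',K_{X'}+nL')=0$ for $i>0$, since $2\alpha \ge 2\vol(K_{X'})+20 > (2\vol(K_{X'})+9)/(p-1)$ in every positive characteristic $p\ge 2$ (with the characteristic-zero case covered by Kawamata--Viehweg). For each such $n$ consider the filtration
\[
K_{X'}+nL'\;\subset\;K_{X'}+nL'+D'\;\subset\;\cdots\;\subset\;K_{X'}+nL'+nD'
\]
and the associated short exact sequences. Using adjunction $(K_{X'}+D')|_{D'}=K_{D'}$ to identify the successive quotients as $\OO_{D'}(K_{D'}+nL'|_{D'}+(k-1)D'|_{D'})$, and combining additivity across the filtration with Riemann--Roch on the curve $D'$ (each restriction has degree $nL'\cdot D'+(k-1)D'^2$, positive throughout a range of $k$ linear in $n$), one obtains a lower bound of the form $h^0(X',K_{X'}+nL'+nD')\ge \tfrac12 n^2(L'\cdot D')+O(n)$.

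Finally, the transfer to $n(K_{X'}+D'+L')$ via $K_{X'}+nL'+nD'=n(K_{X'}+D'+L')-(n-1)K_{X'}$, together with the upper bound on $\vol(K_{X'})$ afforded by $\alpha$, produces $h^0(n(K_{X'}+D'+L'))\ge \tfrac14 n^2(D\cdot L)+O(n)$. Dividing by $n^2/2$ and letting $n\to\infty$ gives $\vol(K_X+D+L)\ge\tfrac14 D\cdot L$. The main technical obstacle is justifying the additive lower bound across the filtration in positive characteristic: although the base vanishing $H^1(K_{X'}+nL')=0$ comes directly from Theorem~\ref{T-van}, the intermediate $H^1$'s do not vanish outright, and one must instead argue that each cokernel is supported on the curve $D'$ so that the defect contributes only linearly in $n$, hence is of lower order than the quadratic main term. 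The explicit constant $4$ emerges from the correction by $-(n-1)K_{X'}$ in the transfer step and from the fact that only roughly half of the indices in the Riemann--Roch sum contribute positively.
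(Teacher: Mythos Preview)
Your approach has two genuine gaps that are not addressed.

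\textbf{The transfer step does not work.} You write $K_{X'}+nL'+nD'=n(K_{X'}+D'+L')-(n-1)K_{X'}$ and claim this lets you pass from $h^0(K_{X'}+nL'+nD')$ to $h^0(n(K_{X'}+D'+L'))$. For this you would need $(n-1)K_{X'}$ to be effective, i.e.\ $h^0(K_{X'})>0$. But nothing in the hypotheses forces that: $X'$ may well be a rational or ruled surface with $|K_{X'}|=\emptyset$, and then there is no way to multiply sections across. The paper avoids this by working with $K_X+L$ rather than $K_X$: it first proves (via Theorem~\ref{T-van}) that $h^0(K_X+L)>0$ and that no component of $D$ lies in $\operatorname{Bs}|K_X+\alpha M+D|$, and then uses multiplication by a general element of $|2(m-1)(K_X+\alpha M+D)|$ to compare $h^0((2m-1)(K_X+L+D))$ with $h^0(K_X+mL+D)$. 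This produces $P(2m-1)-P(2m-2)\ge Q(m)$ directly, with no appeal to effectivity of $K_X$ itself; the factor $4$ drops out of the $2m$ rescaling.

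\textbf{The filtration step is not justified.} You acknowledge that the intermediate groups $H^1(K_{X'}+nL'+kD')$ for $1\le k\le n-1$ do not vanish, and assert that the total defect is $O(n)$ because ``each cokernel is supported on the curve $D'$''. But the cokernel of the restriction map is a subspace of $H^1(X',K_{X'}+nL'+(k-1)D')$, a global cohomology group; it is not supported anywhere, and there is no obvious bound on its size. Summing $n$ such unknown terms gives no a priori control of order $O(n)$. (There is also a secondary issue: if $D'^2<0$ the Riemann--Roch terms $nL'\cdot D'+(k-1)D'^2$ turn negative for $k$ in a range depending on $|D'^2|$, and your ``roughly half'' count does not obviously yield the claimed $\tfrac12 n^2(L'\cdot D')$.) The paper sidesteps this entirely: it uses the single restriction sequence $0\to\OO_X(K_X+mL)\to\OO_X(K_X+mL+D)\to\OO_D(K_D+mL|_D)\to 0$, where the only $H^1$ that needs to vanish is $H^1(K_X+mL)$, and this it obtains for $m\gg 0$ from Grauert--Riemenschneider plus Serre vanishing on $Z$.

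In short, the paper's trick is to never put more than one copy of $D$ into the twist whose $H^1$ must be controlled, and to bridge to $m(K_X+L+D)$ using sections of $K_X+L$ (guaranteed by the hypothesis $\alpha\ge \vol(K_X)+10$) rather than of $K_X$.
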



\begin{proof} 
  By standard reductions, we may assume that $(X,D)$ is an snc pair and the
  components of $D$ are disjoint and not $\phi$-exceptional (cf.~\cite[3.2]{HMX13}).
  Now consider the short exact sequence
  \begin{equation}\label{e-x}
    0\to \mathscr O _X(K_X+m L)\to  \mathscr O _X(K_X+m L+D)\to  \mathscr O _D(K_D+m
    L|_D)\to  0 
  \end{equation}
  Since $R^1\phi _* \mathscr O _X(K_X+mL)=0$ by the Grauert-Riemenschneider vanishing
  theorem (see \cite[10.4]{SingBook} for a version that applies here)
  using the projection formula and Serre vanishing, it follows that
  \[H^1 (\mathscr O _X(K_X+m L))=H^1 (\phi _* \mathscr O _X(K_X+m L))=H^1(\phi _*
  \mathscr O _X(K_X)\otimes \mathscr O _Z(mH))=0\] for all $m \gg 0$ and hence

  \begin{equation}\label{e-y}
    \begin{aligned} 
      H^0(\mathscr O _X(K_X+m L+D))\to H^0(\mathscr O _D(K_D+m
      L|_D))
    \end{aligned}
  \end{equation} 
  is surjective.  We now claim that 

  \begin{claim}\label{c-z}
    It follows that $ h^0(\mathscr O _X(K_X+L))>0$ and no component of $D$ is
    contained in the base locus of $|K_X+\alpha M+D|$.
  \end{claim}

  \begin{proof} Since $\alpha \geq 2{\rm vol}(K_X)+10$, by Theorem \ref{T-van},
    $H^1(\mathscr O _X(K_X+\alpha M))=0$ and hence
    \[H^0(\mathscr O _X(K_X+\alpha M+D))\to H^0(\mathscr O _D(K_D+\alpha
    M|_D))=\oplus H^0(\mathscr O _{D_i}(K_{D_i}+\alpha M|_{D_i}))\] is surjective
    where $D=\sum D_i$ and each $D_i$ is a prime divisor. Since the components of $D$
    are not $\phi$ exceptional, $M\cdot D_i>0$, $H^0(\mathscr O _{D_i}(K_{D_i}+\alpha
    M|_{D_i}))\ne 0$ for all $i$ and so a general element of $H^0(\mathscr O
    _X(K_X+\alpha M+D))$ does not vanish along any component of $D$.

    The proof that $ h^0(\mathscr O _X(K_X+L))>0$ is similar (and easier).
  \end{proof} 
  Now consider the following commutative diagram
  \[\CD \mathscr O_X(K_X+mL+D) @>>> \mathscr O _D(K_D+mL|_D)\\
  @VVV    @VVV\\
  \mathscr O_X((2m-1)(K_X+L+D)) @>>> \mathscr O _D((2m-1)(K_D+L|_D))
  \endCD
  \] 
  where the vertical maps are induced by a general divisor in
  $|(m-1)(2K_X+L+2D)|=|2(m-1)(K_X+\alpha M+D)|$. Since no component of $D$ is
  contained in the support of this divisor (Claim \ref{c-z}), it follows that
  \begin{equation}\label{e-xx}
    \begin{gathered}
      h^0(\mathscr
      O_X((2m-1)(K_X+L+D)))-h^0(\mathscr O_X((2m-2)(K_X+L+D)+K_X+L))= \\
      \dim {\rm Im}\left[ H^0(\mathscr O_X((2m-1)(K_X+L+D)))\to H^0(\mathscr O
        _D((2m-1)(K_D+L|_D)))\right]\geq \\
      h^0(\mathscr O _D(K_D+mL|_D)).
    \end{gathered}
  \end{equation} 

  Let $P(m)=h^0(\mathscr O_X(m(K_X+L+D)))$, then $P(m)=m^2 {\rm
    vol}(K_X+L+D)+o(m^2)$.  Since $h^0(\mathscr O _X(K_X+L))>0$ (see Claim
  \ref{c-z}), we have $h^0(\mathscr O _X((2m-2)(K_X+L+D)+K_X+L))\geq P(2m-2)$ and
  hence by \eqref{e-xx}, we have

  \begin{equation}\label{e-yy} 
    P(2m-1)-P(2m-2)\geq Q(m):=h^0(\mathscr O    _D(K_D+mL|_D))=mL\cdot D+o(m).
  \end{equation}

  Comparing leading terms of $P(m) $ and $Q(m)$, it follows that
  \[ 
  4{\rm vol}(K_X+L+D)\geq L\cdot D.\qedhere
  \]
\end{proof}


\section{The proofs of the main results}
\begin{subsection}{Preliminary results}

  \begin{lemma}\label{l-a} Fix $\mathscr C\subset [0,1]$ a DCC set, then there exists
    a constant $V >0$ such that if $(X,B)$ is a klt surface such that $\rho (X)=1$,
    ${\rm coeff}(B) \subset \CC$ and $K_X+B\sim _\Q 0$, then $(-K_X)^2\leq V$.
  \end{lemma}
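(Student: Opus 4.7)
If $B=0$ then $(-K_X)^2=K_X^2=0$ and any $V>0$ works. So assume $B\neq0$. Since $\rho(X)=1$ and $B$ is nonzero and effective, $B$ is ample, hence $-K_X\sim_\Q B$ is ample and $X$ is a klt log del Pezzo surface of Picard number one. Since $\CC$ is DCC in $[0,1]$, there is $\epsilon_0=\epsilon_0(\CC)>0$ such that every positive element of $\CC$ is at least $\epsilon_0$; in particular every positive coefficient $b_i$ of $B$ satisfies $b_i\geq\epsilon_0$.

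Since $\rho(X)=1$, each component $B_i$ of $B$ satisfies $B_i\equiv c_i(-K_X)$ for some $c_i>0$, giving $B_i^2=c_i^2(-K_X)^2$ and $-K_X\cdot B_i=c_i(-K_X)^2$. The relation $-K_X\equiv\sum_jb_jB_j$ forces $\sum_jb_jc_j=1$, which together with $b_j\geq\epsilon_0$ yields $c_j\leq 1/\epsilon_0$ and $b_jc_j\leq 1$ for each $j$. For each $i$, adjunction on the pair $(X,B_i)$ gives $\nu^*(K_X+B_i)=K_{B_i^\nu}+\Diff_{B_i^\nu}(0)\geq K_{B_i^\nu}$, and since $\deg K_{B_i^\nu}\geq -2$ we obtain $(K_X+B_i)\cdot B_i\geq -2$; combined with $(K_X+B)\cdot B_i=0$ this yields $(B-B_i)\cdot B_i\leq 2$, i.e.,
\[
c_i(1-c_i)(-K_X)^2\leq 2.
\]
In particular, if some $c_i\in(\delta,1-\delta)$ for a fixed $\delta=\delta(\CC)>0$, then $(-K_X)^2\leq 2/(\delta(1-\delta))$ and we are done.

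It therefore remains to exhibit such a $c_i$, or to bound $(-K_X)^2$ in the cases when every $c_i$ lies in $(0,\delta]\cup[1-\delta,\infty)$. If some $c_i\in[1-\delta,1)$ the adjunction bound above already gives $(-K_X)^2\leq 2/(\delta(1-\delta))$. If some $c_i\geq 1$, then $\sum_jb_jc_j=1$ combined with $b_j\geq\epsilon_0$ and $c_j\geq 1$ forces the number of components $N\leq 1/\epsilon_0$, so one applies Shokurov's adjunction (Lemma~\ref{l-sh}) to the lc pair $(X,B+(1-b_i)B_i)$ to obtain an lc pair $(B_i^\nu,\Theta_i)$ with $\Theta_i$'s coefficients in the DCC set $D(\CC)$ and $\deg(K_{B_i^\nu}+\Theta_i)=(1-b_i)c_i^2(-K_X)^2$; the DCC constraint on $D(\CC)$ limits the growth of $\Theta_i$'s support and yields the required bound. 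The hardest subcase is when all $c_i\leq\delta$: the constraint $\sum_jb_jc_j=1$ forces at least $1/\delta$ components, and a careful combinatorial argument using Shokurov's formula together with the DCC of $\CC$ is needed.

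The main obstacle is this last many-small-components regime, where one must extract a uniform bound from the simultaneous control of many components $B_i$ with small $c_i$, their mutual intersections, and the quotient singularities of $X$ contributing to each $\Diff_{B_i^\nu}(B-b_iB_i)$; a passage to an ultraproduct along a hypothetical sequence of counterexamples, producing a limiting configuration whose boundary coefficients contradict the DCC of $\CC$, appears to be the cleanest way to conclude.
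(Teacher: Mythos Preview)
Your approach diverges sharply from the paper's, and as written it does not close. There are three concrete problems.

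First, the case $c_i\in[1-\delta,1)$ is mishandled. On that interval the product $c_i(1-c_i)$ takes values in $(0,(1-\delta)\delta]$, and in particular tends to $0$ as $c_i\to 1^-$. Your inequality $c_i(1-c_i)(-K_X)^2\leq 2$ therefore gives only $(-K_X)^2\leq 2/(c_i(1-c_i))$, which blows up; it does \emph{not} give $(-K_X)^2\leq 2/(\delta(1-\delta))$.

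Second, in the case $c_i\geq 1$ you apply Shokurov adjunction to $(X,B+(1-b_i)B_i)$, asserting it is lc. But you only know $(X,B)$ is klt; raising the coefficient of $B_i$ from $b_i$ to $1$ can destroy log canonicity, so the hypothesis of Lemma~\ref{l-sh} is not available and the claimed DCC control on $\Theta_i$ is unjustified.

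Third, and most seriously, you explicitly leave the ``all $c_i\leq\delta$'' regime unproved, saying only that a combinatorial or ultraproduct argument ``appears to be the cleanest way to conclude.'' This is exactly the heart of the matter: when $(-K_X)^2$ is large, the components of $B$ can indeed all have tiny $c_i$, and there is no evident way to extract a contradiction from the local adjunction data alone.

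The paper's argument avoids all of this by a completely different mechanism. One assumes $(-K_X)^2>V$ and, using $\vol(-K_X)>V$, produces $G\sim_\Q -K_X$ with $\mathrm{mult}_x(G)>\sqrt{V}$ at a general smooth point. One then takes the log canonical threshold: $\Phi=(1-\delta)B+\delta G$ is lc but not klt, with $\delta<2/\sqrt{V}$. The unique non-klt centre $Z$ is then analysed: if $Z$ is a divisor, adjunction plus DCC bounds $\delta$ from below; if $Z$ is a point, one extracts the discrepancy $-1$ divisor $E$, runs one step of the $(K+\Phi')$-MMP, and on the resulting fibre or contraction uses DCC (via $D(\CC)$) to again bound $\delta$ from below. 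Either way $\delta$ is bounded below independently of $V$, contradicting $\delta<2/\sqrt{V}$ for $V$ large. The key idea you are missing is this passage through the log canonical threshold of an auxiliary high-multiplicity divisor, which converts the volume bound into a discrepancy/DCC constraint without ever needing to control the individual $c_i$.
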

  \begin{proof} Suppose that $(-K_X)^2>V$, then for any smooth point $x\in X$ there
    exists a $\Q$-divisor $G\sim _\Q -K_X$ such that ${\rm mult}_x (G)>V^{1/2}$
    (cf.~\cite[10.4.12]{Laz04}). Since $\rho (X)=1$, we may assume that all
    components of $G$ contain a general point $x\in X$ and in particular are not
    contained in the support of $B$. Let $\Phi =(1-\delta)B+\delta G$ such that
    $(X,\Phi )$ is log canonical but not klt. Notice that $0<\delta < 2/V^{1/2}$
    (cf.~\cite[9.3.2]{Laz04}). Perturbing $G$ we may in fact assume that there is a
    unique non-klt center $Z$ for $(X,\Phi)$.

    If $Z$ is a divisor, then (since $\rho (X)=1$) we may assume that $\delta
    G=Z$. Restricting to $Z$ we have \[0\equiv (K_X+(1-\delta )B+\delta G)|_Z=K_Z
    +{\rm Diff}_Z((1-\delta )B).\] Since $\deg {\rm Diff}_Z((1-\delta )B)\geq 0$ then
    ${\rm deg}(K_Z)\in \{0,-2\}$. If $B\neq 0$, then \[2=\deg {\rm Diff}_Z((1-\delta
    )B)\] (see Lemma \ref{l-sh}) easily implies that $\delta$ is bounded from below
    (cf.~\cite[5.2]{HMX14}) and hence $(-K_X)^2$ is bounded from above.  If $B=0$ then
    $K_X\equiv 0$ and the claim is trivial.

    Therefore we may assume that $\dim Z=0$. Let $\nu :X'\to X$ be the extraction of
    the corresponding curve $E$ of discrepancy $-1$ so that $K_ {X'}+E+\Phi '=\nu
    ^*(K_X+\Phi)\sim _\Q 0$ where $\Phi '=\nu ^{-1}_* \Phi$. Write $K_{X'}+B'+aE=\nu
    ^*(K_X+B)\sim _\Q 0$ where $a<1$.  We run the first step of the $K_{X'}+\Phi
    '\equiv -E $ minimal model program. If the induced rational map is a Mori fiber
    space $X'\to W$, then restricting to a general fiber $F$ we let $\Phi
    ''=\Phi'|_F$, $E''=E|_ F$ and $B''=B'|_F$, and if the induced rational map is a
    divisorial contraction $\pi :X'\to F$, then we let $\Phi ''$, $E''$ and $B''$ be
    the pushforwards of $\Phi'$, $E$ and $B'$.  We have that \[K_F+\Phi '' +
    E''\equiv 0, \qquad K_F+B'' +aE''\equiv 0,\] $K_F+B''+E'' \equiv  (1-a)E''$ is ample and since
    $\Phi ''\geq (1-\delta)B''$, then
  \[\delta B''\equiv (1-a)E''+\Phi ''-(1-\delta )B''\geq 0.\]  It follows that
  $B''\ne 0$ and $K_F+(1-\eta)B''+E''\equiv 0$ for some $0<\eta <\delta$.

  If $\dim F =1$, then since the coefficients of $B''$ are in the DCC set $\CC$,
  there exists a constant $\beta >0$ such that $\deg (\eta B'')=\deg
  (K_F+B''+E'')\geq \beta$. But then, since $K_F+B''+aE''\equiv 0$, we have
  \[
  2\geq \deg (B'')\geq \beta /\eta >\beta /\delta\geq \beta V^{1/2}/2
  \] 
  and so $V$ and hence $(-K_X)^2$ are bounded from above.

  If $\dim F=2$, let $(K_{F}+B''+E'')|_{E''}=K_{E''}+{\rm Diff}_{E''}(B'')$. Since
  $(F,E'')$ is purely log terminal, then $E''$ is smooth and by adjunction the
  coefficients of ${\rm Diff}_{E''}(B'')$ are in the DCC set $D(\CC )$ and so
  \[-2+\deg ({\rm Diff}_{E''}(B''))=(K_{F}+B''+E'')\cdot E''\geq \beta -2>0\] where
  $\beta ={\rm min}\{ \sum b_i'|b_i'\in D(\CC ),\ \sum b_i'>2\}$.  Fix $\lambda$ such
  that $K_{E''}+\lambda {\rm Diff}_{E''}(B'')\equiv 0$, then as $E''$ is
  rational, \[\lambda =\frac 2{{\rm deg}({\rm Diff}_{E''}(B''))}\leq \frac 2 \beta
  <1.\] However, by Lemma \ref{l-ineq}, we have 
  \[
  (K_{F}+\lambda B''+E'')|_{E''} \geq K_{E''}+\lambda {\rm Diff}_{E''}(B'')\equiv
  0\equiv (K_{F}+(1-\eta ) B''+E'')|_{E''} 
  \] 
  and so $1-\delta < 1-\eta \leq \lambda\leq 2/\beta <1$. But then 
  \[
  0<1-\frac 2 \beta < \delta<\frac 2 {\sqrt {V}}< \frac 2 {\sqrt {(-K_X)^2}}
  \] 
  which implies that $(-K_X)^2$ is bounded from above.
\end{proof}

\begin{lemma}\label{l-b} 
  Fix $\mathscr C\subset [0,1]$ a DCC set, then there exists an $\epsilon >0$ such
  that if $(X,B)$ is a projective klt surface such that $\rho (X)=1$, ${\rm coeff}(B)
  \subset \CC$ and $K_X+B\sim _\Q 0$, then $(X,B)$ is $\epsilon$ Kawamata log terminal.
\end{lemma}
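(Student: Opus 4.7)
The plan is to argue by contradiction, paralleling the strategy in the proof of Lemma \ref{l-a}. Suppose the conclusion fails; then there exist pairs $(X_n, B_n)$ satisfying the hypotheses together with divisors $E_n$ (either components of $B_n$ or exceptional over $X_n$) whose log discrepancies $\epsilon_n := a(E_n; X_n, B_n) + 1$ tend to $0$. By Lemma \ref{l-a}, the volumes $(-K_{X_n})^2$ are uniformly bounded, say by some constant $V$.

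Suppose first that $E_n$ is a component of $B_n$, so its coefficient in $B_n$ is $1 - \epsilon_n$. Shokurov's adjunction (Lemma \ref{l-sh}) applied to $E_n + (B_n - (1-\epsilon_n) E_n)$, together with $K_{X_n} + B_n \sim_\Q 0$, yields
\[
K_{E_n^\nu} + \Diff_{E_n^\nu}\bigl(B_n - (1-\epsilon_n) E_n\bigr) \;\sim_\Q\; \epsilon_n\, E_n\big|_{E_n^\nu}.
\]
Because $\rho(X_n) = 1$, the inclusion $(1-\epsilon_n) E_n \leq B_n \sim_\Q -K_{X_n}$ gives $E_n \equiv \alpha_n (-K_{X_n})$ with $\alpha_n \leq 1/(1-\epsilon_n)$, so $E_n^2 \leq V/(1-\epsilon_n)^2 \leq 2V$ for $n \gg 0$. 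The right side above then has degree tending to $0^+$, which rules out $g(E_n^\nu) \geq 1$; for $g(E_n^\nu) = 0$ the identity becomes $\deg \Diff_{E_n^\nu}(\ldots) = 2 + \epsilon_n E_n^2 \to 2^+$. Since the coefficients of $\Diff$ lie in the DCC set $D(\CC)$, this degree lies in a DCC subset of $\R$; the strict descent to $2$ from above forces $\deg \Diff$ to stabilize at some $c > 2$, hence $\epsilon_n E_n^2 = c - 2$ is eventually constant, contradicting $\epsilon_n \to 0$ and $E_n^2 \leq 2V$.

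If $E_n$ is exceptional, let $\nu_n : X'_n \to X_n$ be the extraction of $E_n$; then $\rho(X'_n) = 2$, $(X'_n, (1-\epsilon_n) E_n + B'_n)$ is klt, and $K_{X'_n} + (1-\epsilon_n) E_n + B'_n \sim_\Q 0$. Since $K_{X'_n} + B'_n \equiv -(1-\epsilon_n) E_n$ is $\nu_n$-ample, the $(K_{X'_n} + B'_n)$-MMP proceeds via the extremal ray complementary to $R_{\nu_n}$; being $(K_{X'_n} + (1-\epsilon_n) E_n + B'_n)$-trivial, the step preserves the klt property of $(X'_n, (1-\epsilon_n) E_n + B'_n)$. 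If it is a divisorial contraction $\pi : X'_n \to F_n$, then $E_n$ is not contracted (being positive on the contracted ray), and $(F_n, (1-\epsilon_n) \pi_* E_n + \pi_* B'_n)$ is a klt pair on a surface with $\rho(F_n) = 1$, $K_{F_n} + \pi_* [(1-\epsilon_n) E_n + B'_n] \sim_\Q 0$, and boundary coefficients in the DCC set $\CC \cup \{1 - \epsilon_m\}_{m \in \N}$; thus the first case applies on $F_n$. If instead the step produces a Mori fiber space $X'_n \to W_n$ (necessarily a conic bundle to a curve), intersecting the relation with a general fiber $\phi$ gives $(1-\epsilon_n)(E_n \cdot \phi) + (B'_n \cdot \phi) = 2$; the integrality of $E_n \cdot \phi$ together with the fact that nonzero elements of $\CC$ are bounded away from $0$ by DCC forces a contradiction in each of the finitely many possible integer values of $E_n \cdot \phi$, along the lines of the $\dim F = 1$ analysis at the end of the proof of Lemma \ref{l-a}.

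The principal obstacle lies in Case 2: verifying that the pushforward in the divisorial subcase remains klt and that the DCC property is preserved (the latter is automatic since a union of two DCC sets is DCC), and ruling out every integer value of $E_n \cdot \phi$ in the conic bundle subcase. Once this is threaded, the contradiction is driven, as in Lemma \ref{l-a}, by DCC of $\CC$ and $D(\CC)$ applied to sequences of coefficients that would have to descend strictly under the assumption $\epsilon_n \to 0$.
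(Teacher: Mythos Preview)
Your proposal is correct and follows essentially the same strategy as the paper's proof: reduce via extraction and a single MMP step (handling the Mori fiber space and divisorial contraction cases separately) to the situation where the boundary has a component of coefficient $1-\epsilon_n$ on a rank-one surface, then combine adjunction, the DCC property of $D(\CC)$, and Lemma~\ref{l-a} to derive a contradiction. The only cosmetic difference is that the paper phrases the final step as showing $(-K_X)^2\geq (1-\epsilon_n)^2 S^2 \geq (1-\epsilon_n)^2(\beta-2)/\epsilon_n \to \infty$ (violating Lemma~\ref{l-a}), whereas you bound $E_n^2$ first and argue that $\deg\Diff\to 2^+$ violates DCC; these are equivalent rearrangements of the same inequality.
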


\begin{proof} 
  Suppose that the claim is false. Then there is a sequence of pairs $(X_n,B_n)$ as
  above with total discrepancy $a(X_n,B_n)=\epsilon _n-1$ such that $\epsilon _n$ is
  a decreasing sequence with limit $0$. Let $\CC '=\CC \cup \{ 1-\epsilon _n \}_{n\in
    \N}$ then $\CC '$ is a DCC set.  Suppose that $(X_n,B_n)$ does not contain a
  component of coefficient $1-\epsilon_n$. Let $\nu :X'\to X=X_n$ be a projective
  birational morphism extracting the corresponding divisor $E$ so that $\rho
  (X'/X)=1$ and the exceptional divisor is $E$. We may write $K_{X'}+B'+eE=\nu
  ^*(K_X+B)$ where $e=1-\epsilon_n$.  Since $\rho (X')=2$ there is a second extremal
  ray $R_2$ (here $R_1=[E]$). Since $(K_{X'}+B')\cdot R_2=-eE\cdot R_2<0$, it follows
  that $R_2$ is $K_{X'}+B'$ negative and hence it can be contracted. Let $\mu: X'\to
  X''$ be the corresponding contraction.  If $\dim X''=1$, then let $F\simeq \mathbb P
  ^1$ be a general fiber. We have \[0=\deg (K_{X'}+B'+eE)|_F=-2+(B'+eE)\cdot F=-2
  +\sum b_i +e\] Since $b_i\in \mathscr C$, it is easy to see that $e=1-\epsilon _n$ is constant
  for $n\gg 0$ which is impossible.
  Therefore, we may assume that $X'\to X''$ is a birational contraction.  Then
  $K_{X''}+B''+eS=\mu_*(K_{X'}+B'+eE)\sim _\Q 0$. Replacing $X$ by $X''$ and $B$ by
  $B''+eS$, we may assume that $B$ contains a component $S$ of coefficient
  $e=1-\epsilon_n$.

  Write $B=B'+eS$. We then have \[\epsilon S^2=(1-e)S^2=(K_X+B'+S)\cdot S={\rm
    deg}(K_S+{\rm Diff }_S(B'))\geq \beta -2\] where $\beta={\rm min}\{ \sum
  b_i'|b_i'\in D(\CC ' ),\ \sum b_i'>2\}>2$. But then \[(-K_X)^2=B^2\geq
  (1-\epsilon)^2S^2\geq \frac {(1-\epsilon)^2}{\epsilon}(\beta -2)\] where $\lim
  _{n\to \infty}(1-\epsilon_n)^2(\beta -2)/\epsilon_n =+\infty $ contradicting Lemma
  \ref{l-a}.
\end{proof}

\begin{lemma}\label{l-c} 
  Fix $\mathscr C\subset [0,1]$ a DCC set, then there exists a constant $\delta >0$
  such that if $(X,B=\sum _{i=1}^rb_iB_i )$ is a klt surface such that $K_X+B$ is
  big, and $b_i \in \CC$, then $K_X+(1-\delta )B$ is big.
\end{lemma}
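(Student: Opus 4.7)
The plan is to argue by contradiction via a bigness-threshold MMP. Suppose no such $\delta$ exists; then there are klt surface pairs $(X_n,B_n)$ with $\coeff(B_n)\subset\mathscr C$, $K_{X_n}+B_n$ big, and $\delta_n\downarrow 0$ such that $K_{X_n}+(1-\delta_n)B_n$ is not big. Let $s_n:=\inf\{s\in[0,1]:K_{X_n}+sB_n\text{ is big}\}\in[1-\delta_n,1]$, so $s_n\to 1$ and $K_{X_n}+s_nB_n$ is pseudoeffective but not big. I run a $(K_{X_n}+s_nB_n)$-MMP on the klt pair $(X_n,s_nB_n)$; this terminates (we are on a surface) in either a Mori fibre contraction $\pi_n\colon Y_n\to Z_n$ or a minimal model on which $K_{Y_n}+s_nB_{Y_n}$ is nef of numerical dimension $\le 1$. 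Throughout, $K_{Y_n}+B_{Y_n}$ remains big (volumes do not decrease under birational pushforward) and $\coeff(B_{Y_n})\subset\mathscr C$.

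The cleanest case is a Mori fibration to a curve. On a general $\P^1$-fibre $F$, bigness of $K_{Y_n}+B_{Y_n}$ forces $u_n:=\sum m_ib_i>2$ (with $m_i=B_i\cdot F\in\N$), while $(K_{Y_n}+s_nB_{Y_n})\cdot F<0$ forces $u_n<2/s_n$. Since $\mathscr C_+$ is DCC, $\mathscr C_+\cap(2,\infty)$ has a strict minimum $u^*>2$, which for $n\gg 0$ contradicts $u^*\le u_n<2/s_n\to 2$. In the Mori-fibre-to-a-point case, $\rho(Y_n)=1$, $K_{Y_n}+s_nB_{Y_n}$ is anti-ample, and $K_{Y_n}+B_{Y_n}$ is big (hence ample on $\rho=1$), so by intermediate values there exists $\lambda_n\in(s_n,1)$ with $K_{Y_n}+\lambda_nB_{Y_n}\equiv 0$. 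After passing to a subsequence with $\lambda_n\uparrow 1$, the set $\mathscr C':=\mathscr C\cup\bigcup_n\lambda_n\mathscr C\subset[0,1]$ is still DCC; Lemmas \ref{l-a} and \ref{l-b} applied to $(Y_n,\lambda_nB_{Y_n})$ then yield uniform constants $V,\epsilon>0$ with $(-K_{Y_n})^2\le V$ and $(Y_n,\lambda_nB_{Y_n})$ being $\epsilon$-klt. These uniform bounds put $\{Y_n\}$ into a bounded family whose Picard groups have a uniform divisibility bound: each component $D_i\subset B_{Y_n}$ satisfies $[D_i]=c_i[-K_{Y_n}]$ with $c_i\in\tfrac1N\N$ for a uniform $N$. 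The identity $\lambda_nB_{Y_n}\equiv-K_{Y_n}$ then reads $\lambda_n\sum b_ic_i=1$ with $\sum b_ic_i$ in a DCC set, and accumulation at $1$ from above in a DCC set is impossible. The remaining minimal-model case reduces to one of the above by cutting with a general fibre of the Iitaka fibration when $\nu=1$, or by further running a $K_{Y_n}$-MMP (using that $-K_{Y_n}\equiv s_nB_{Y_n}$ is big, hence $K_{Y_n}$-negative extremal rays exist) to descend to a Mori fibre structure with $\rho=1$ when $\nu=0$.

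The main obstacle is the Mori-fibre-to-a-point case: one must coherently organize the varying scalars $\lambda_n$ into a single DCC framework (via the monotonicity $\lambda_n\uparrow 1$), and then bootstrap the uniform bounds from Lemmas \ref{l-a} and \ref{l-b} into a uniform-denominator statement on the classes $[D_i]$ before the final DCC contradiction can be extracted. The other cases are essentially elementary exploitations of the fact that $2/s_n\to 2$ squeezes $\mathscr C_+$-valued quantities past their infimum.
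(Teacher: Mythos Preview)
Your argument is essentially the paper's, just reorganised, and it works; a few points deserve tightening. Since $K_{X_n}+s_nB_n$ is by construction pseudoeffective, the $(K+s_nB)$-MMP never terminates in a Mori fibre space, so your ``direct'' Mori fibre cases with $(K+s_nB)\cdot F<0$ and $K+s_nB$ anti-ample do not actually occur; everything passes through the minimal-model step, after which the relevant relation on a fibre is the \emph{equality} $(K+s_nB)\cdot F=0$, i.e.\ $u_n=2/s_n$, and the DCC contradiction goes through as you wrote. The paper streamlines this by first making $K+(1-\delta_n)B$ nef and then running a $K$-MMP, noting (via \cite[5.1,5.2]{HMX14}) that each step is automatically $(K+(1-\delta_n)B)$-trivial, so one lands directly on a Mori fibre space with $K+(1-\delta_n)B$ trivial on fibres.

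In the $\rho=1$ case your $\lambda_n$ is then simply $s_n$, so the intermediate-value step is superfluous. Your appeal to a ``bounded family'' to get $c_i\in\tfrac1N\mathbb N$ is heavier than necessary and slightly imprecise: the paper instead uses Lemmas~\ref{l-b}, \ref{l-rho} and \ref{l-d} to make $NK_{Y_n}$ Cartier for a uniform $N$, whence $N(-K_{Y_n})^2\in\{1,\dots,NV\}$ is eventually constant and $B_{Y_n}\cdot(-NK_{Y_n})\in\mathbb Z$ forces $N(-K_{Y_n})^2/(1-\delta_n)$ to be an integer for all $n$, which is impossible as $\delta_n\to 0$. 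This is the same contradiction you reach, obtained without invoking boundedness of $\epsilon$-klt del Pezzos.
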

\begin{proof} If this were not the case, then there is a sequence of klt surfaces
  $(X_n,B_n)$ and a decreasing sequence of numbers $\delta _n>0$ such that $\lim
  \delta _n=0$ and $\kappa (K_{X_n}+(1-\delta _n)B _n)\in \{0,1\}$. After running a
  ($K_{X_n}+(1-\delta _n)B _n$)-minimal model program, we may assume that
  $K_{X_n}+(1-\delta _n)B _n$ is nef.  Now we run a $K_{X_n}$-minimal model
  program. After finitely many divisorial contractions, we may assume that we have a
  Mori fiber space $f:X'_n\to Z_n$. Since each divisorial contraction is
  automatically $K_{X_n}+(1-\delta _n)B _n$-trivial (see \cite[5.1, 5.2]{HMX14}), we
  may assume that $K_{X'_n}+(1-\delta _n)B '_n$ is nef and $f$ is $K_{X'_n}+(1-\delta
  _n)B '_n$-trivial.

  If $\dim Z_n=1$, let $F_n\simeq \mathbb P ^1$ be a general fiber. We have
  \[
  0=(K_{X'_n}+(1-\delta _n)B'_n)\cdot F_n=-2+(1-\delta _n)\sum n_ib_i
  \] 
  where $b _i\in \CC$ and $n_i\in \N$. Note that $(1-\delta _n)B'_n\cdot F_n\ne
  0$. Therefore $2/(1-\delta _n)$ is a decreasing sequence contained in the DCC set
  $\{\sum n_ib_i|n_i\in \N,\ b_i\in \CC \}$. Thus $\delta _n$ is evenytually constant
  as required.

  If $\dim Z_n=0$, then $\rho (X'_n)=1$ and $-K_{X'_n}$ is ample.  Since
  $K_{X'_n}+(1-\delta _n)B '_n\equiv 0$ and the coefficients of $(1-\delta _n)B '_n$
  belong to a DCC set, say $\mathscr C'$, by Lemma \ref{l-b} there exists an
  $\epsilon >0$ such that each $(X'_n,(1-\delta _n)B '_n)$ is $\epsilon$-klt and so
  by Lemmas \ref{l-d} and \ref{l-rho}, there is an integer $N>0$ such that $NK_{X'_n}$
  is Cartier. Now consider
  \[
  N(-K_{X'_n})^2=-(1-\delta _n)B'_n\cdot NK_{X'_n}.
  \] 
  Since $NK_{X'_n}$ is Cartier (and $K_{X'_n}$ is a Weil divisor), by Lemma \ref{l-a} $N(-K_{X'_n})^2\in \{1,2,3,\ldots
  , NV\}$ a finite set of positive integers.  Therefore, after passing to a
  subsequence, we may assume that $N(-K_{X'_n})^2$ is constant. But then
  $N(-K_{X'_n})^2/(1-\delta _n)$ cannot be an integer for $n\gg 0$ and this is a
  contradiction since $B'_n\cdot (-NK_{X'_n})\in \Z$.
\end{proof}

\begin{remark} 
  An effective version of \eqref{l-c} is proven in \cite[4.6]{AM03}.
\end{remark} 

\begin{lemma}\label{l-rho} 
  Fix $\epsilon>0$ then there exists a constant $\varrho=\varrho(\epsilon)$ such that
  if $(X,B)$ is a projective $\epsilon$-log canonical surface and $-(K_X+B)$ is nef,
  then ${\rm rk}\ {\rm Pic}(X)\leq \varrho$. In particular the number of exceptional
  divisors of negative discrepancy $a_E(X,B)<0$ is at most
  $\varrho$. 
\end{lemma}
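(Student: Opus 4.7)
The plan is to reduce to bounding the Picard number of a smooth projective surface. First I would take the minimal log resolution $\nu\colon Y\to X$ of $(X,B)$ and write $\nu^*(K_X+B)=K_Y+\Gamma_Y$. The $\epsilon$-lc hypothesis forces all discrepancies of $\nu$-exceptional divisors and of the strict transform of $B$ to lie in $[\epsilon-1,0]$, so $\Gamma_Y\ge 0$ has coefficients at most $1-\epsilon$, the pair $(Y,\Gamma_Y)$ is $\epsilon$-lc, and $-(K_Y+\Gamma_Y)=\nu^*(-(K_X+B))$ is still nef. Since $\rho(X)\le\rho(Y)$, a bound on $\rho(Y)$ bounds $\rho(X)$; any divisor $E$ over $X$ with $a_E(X,B)<0$ either appears as a component of $\Gamma_Y$ or is extracted by a further sequence of blow-ups of $Y$ whose length is controlled by the classical combinatorial bound on the dual graph of $\epsilon$-lc surface singularities, so the second assertion will follow from the first.

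The remaining task is to bound $\rho(Y)$ for a smooth projective surface $Y$ carrying an effective $\Q$-divisor $\Gamma$ with $\coeff(\Gamma)\le 1-\epsilon$ and $-(K_Y+\Gamma)$ nef. I would first note that $-K_Y=\Gamma+(-(K_Y+\Gamma))$ is pseudo-effective, being the sum of an effective and a nef divisor. If $K_Y$ is also pseudo-effective, then $-(K_Y+\Gamma)\cdot A\ge 0$ and $\Gamma\cdot A\ge 0$ for any ample $A$ force $K_Y\equiv \Gamma\equiv 0$, and $Y$ falls into one of the Enriques--Kodaira classes with $K_Y\equiv 0$ (K3, Enriques, abelian, or bielliptic), where $\rho(Y)$ is bounded absolutely. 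Otherwise $Y$ has negative Kodaira dimension, hence is birationally ruled, and I would take a birational morphism $f\colon Y\to Y_0$ to a smooth minimal model: either $\P^2$, a Hirzebruch surface, or a $\P^1$-bundle over a smooth curve $C$.

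In the birationally ruled case, write $\rho(Y)=\rho(Y_0)+N$ where $N$ is the number of blow-ups in $f$. Bounding $g(C)$ follows by restricting $K_Y+\Gamma_Y$ to the general fiber $F$ of the ruling and using $-(K_Y+\Gamma_Y)\cdot F\ge 0$ with $K_Y\cdot F=-2$ together with the coefficient bound on $\Gamma_Y$. To bound $N$, each step in a factorization of $f$ contracts a $(-1)$-curve $E$ on an intermediate smooth surface, and $(K+\Gamma)\cdot E\le 0$ combined with $K\cdot E=-1$ gives $\Gamma\cdot E\le 1$; the coefficient bound then restricts how many components of $\Gamma$ can meet $E$. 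Iterating this alongside the Hodge index inequalities $(K_Y+\Gamma_Y)^2\ge 0$ and $(K_Y+\Gamma_Y)\cdot\Gamma_Y\le 0$ should yield a uniform lower bound $K_Y^2\ge -C(\epsilon)$, and then Noether's formula gives $\rho(Y)\le C(\epsilon)+10$. The main obstacle I anticipate is precisely this uniform lower bound on $K_Y^2$: it is the heart of the classical boundedness of $\epsilon$-lc weak log del Pezzo surfaces in dimension two, and setting up the intersection-theoretic bookkeeping carefully in arbitrary characteristic is the delicate step.
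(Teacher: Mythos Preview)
Your reduction to a smooth model is exactly what the paper does: pass to a birational model $X'\to X$ extracting precisely the divisors of negative discrepancy, so that $K_{X'}+B'=f^*(K_X+B)$ with $B'\ge 0$, $\coeff(B')\subset(0,1-\epsilon]$, $X'$ smooth, and $-(K_{X'}+B')$ nef. At that point, however, the paper simply \emph{cites} the needed bound $\rho(X')\le\varrho(\epsilon)$ from \cite[Theorem~6.3]{Alexeev94} (see also \cite[Theorem~1.8]{AM03}) rather than reproving it. Your sketch from that point on is essentially an outline of the proof of that cited theorem, so the approaches coincide up to whether one quotes or reproves Alexeev's result.

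Two remarks on your outline of the cited theorem. First, your justification for bounding $g(C)$ is not the right one: restricting $K_Y+\Gamma_Y$ to a general fiber $F\simeq\P^1$ only gives $\Gamma_Y\cdot F\le 2$ and says nothing about the base genus. The correct reason is that $-K_Y=\Gamma_Y+\bigl(-(K_Y+\Gamma_Y)\bigr)$ is pseudo-effective, hence so is $-K_{Y_0}=f_*(-K_Y)$, which forces $g(C)\le 1$ for a geometrically ruled $Y_0$. Second, you are right that the crux is the uniform lower bound $K_Y^2\ge -C(\epsilon)$; this is exactly the content of the theorems the paper cites, and your honest remark that the bookkeeping is ``the delicate step'' is accurate---it is a genuine argument, not a formality.

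For the ``in particular'' clause the paper's bookkeeping is cleaner than yours: since $X'$ extracts \emph{exactly} the exceptional divisors with $a_E(X,B)<0$, their number is $\rho(X')-\rho(X)\le\rho(X')-1\le\varrho$, and no further analysis of blow-ups over $Y$ is needed.
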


\begin{proof} 
  Let $f:X'\to X$ be a projective birational morphism such that
  $K_{X'}+B'=f^*(K_X+B)$ where $B'\geq 0$ and $a_E(X',B')\geq 0$ for any divisor $E$
  exceptional over $X'$ (in other words $f$ extracts precisely the divisors of
  negative discrepancy $a_E(X,B)<0$). Clearly $-(K_{X'}+B')$ is nef, $X'$ is smooth
  and ${\rm coeff}(B')\in (0,1-\epsilon]$. By \cite[Theorem 6.3]{Alexeev94} (see also
  \cite[Theorem 1.8]{AM03}) there exists a constant $\varrho=\varrho(\epsilon)$
  such that $\rho (X)\leq \rho (X')\leq \varrho$. Finally the number of exceptional
  divisors of negative discrepancy is just $\rho (X')-\rho (X)\leq \rho (X')-1$ and
  the lemma follows.
\end{proof}

\begin{lemma}\label{l-d} 
  Fix $k\in \N$ and $\epsilon >0$. There exists an integer $N=N(k,\epsilon)$ such
  that if $(X,B)$ is an $\epsilon$-klt surface singularity such that the number of
  exceptional divisors of discrepancy $a_E(X,B)<0$ is $\leq k$ then $NK_X$ is Cartier
  and $NG$ is Cartier for any integral Weil divisor $G$ contained in the support of
  $B$.
\end{lemma}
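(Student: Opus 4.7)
The plan is to prove the lemma by a local analysis at each closed point $x\in X$ using the minimal resolution $\pi:\wt X\to X$. Let $\mathcal{E}_x=\{E_{x,1},\dots,E_{x,r_x}\}$ be the exceptional divisors of $\pi$ over $x$; these form a connected tree and, since $(X,B)$ is klt, every $E_{x,i}\cong \P^1$ while the intersection matrix $M_x=(E_{x,i}\cdot E_{x,j})$ is negative-definite. Writing $K_{\wt X}=\pi^*K_X+\sum a_iE_i$ and $\pi^*B=\wt B+\sum m_iE_i$ (with $\wt B$ the strict transform of $B$), one has $a_{E_i}(X,B)=a_i-m_i$, and the $\epsilon$-klt hypothesis gives $0\geq a_i-m_i>\epsilon-1$ for every $i$.

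The key structural input is that $-M_x$ is a symmetric positive-definite matrix with non-positive off-diagonal entries (distinct exceptional curves meet non-negatively), so $(-M_x)^{-1}$ has non-negative entries; since the dual graph over $x$ is connected, it is entrywise strictly positive. Applied to the adjunction relation $\sum_j a_j(E_i\cdot E_j)=-E_i^2-2$, this forces the following dichotomy: if $x$ is not Du Val, i.e., some $E_i^2\neq -2$, then $a_i<0$ for \emph{every} $i$. Applied to $\sum_j m_j(E_i\cdot E_j)=-\wt B\cdot E_i$ (which comes from $\pi^*B\cdot E_i=0$), it gives: if $B$ passes through $x$, then $m_i>0$ for every $i$. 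Consequently, at each $x$ which is either non-Du-Val or a Du Val point through which $B$ passes, every $E_i\in\mathcal{E}_x$ satisfies $a_{E_i}(X,B)<0$, so $r_x\leq k$ and the total number of such ``special'' points is at most $k$.

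At every remaining singular $x$, the surface $X$ is Du Val and $B$ does not pass through $x$, so $K_X$ is automatically Cartier at $x$ and so is any $G\subseteq\Supp(B)$. At each special point, the rearrangement $|E_i^2|(1+a_i)=2+\sum_{j\neq i}a_j(E_i\cdot E_j)\leq 2$ combined with $1+a_i>\epsilon$ yields $|E_i^2|\leq 2/\epsilon$. Together with $r_x\leq k$ and the fact that $E_i\cdot E_j\in\{0,1\}$ for $i\neq j$ on a klt dual graph, this leaves only finitely many possibilities for $M_x$. Since the local class group at $x$ is $\Z^{r_x}/M_x\Z^{r_x}$ of order $|\det M_x|$, this uniformly bounds the Cartier indices of $K_X$ and of every Weil divisor supported at $x$ by some $N=N(k,\epsilon)$, and the least common multiple over the at-most-$k$ special points gives the required integer. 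The main subtlety is the dichotomy in the second paragraph: the strict positivity of $(-M_x)^{-1}$ is what couples the negative-discrepancy condition across an entire connected component, so that the hypothesis of at most $k$ negative-discrepancy divisors actually bounds the full local resolution graph at every relevant point.
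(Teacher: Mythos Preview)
Your argument is correct and follows the same core strategy as the paper: bound the exceptional self-intersections by $2/\epsilon$, deduce that only finitely many intersection matrices can occur, and bound the Cartier index by the determinant. The execution differs in two places. First, you work on the full minimal resolution and prove explicitly, via the strict positivity of $(-M_x)^{-1}$, that at every non--Du~Val point (and at every Du~Val point through which $B$ passes) \emph{all} exceptional curves have negative discrepancy for $(X,B)$; the paper instead takes the partial extraction $\nu:X'\to X$ of precisely the negative-discrepancy divisors, so that at most $k$ curves appear by hypothesis and the residual singularities of $X'$ are Du~Val and disjoint from $\mathrm{Supp}(B')$ --- the same dichotomy, used implicitly. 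Second, for the final step you quote the description of the local class group of a rational surface singularity as $\Z^{r_x}/M_x\Z^{r_x}$ (Lipman; valid in every characteristic), whereas the paper shows that $t\nu^*G$ is an integral, hence Cartier, divisor on $X'$ (using that the Du~Val points of $X'$ avoid both $G'$ and the $E_i$) and then descends to $X$ via the basepoint-free theorem. Your route is a little more self-contained and avoids the basepoint-free theorem; the paper's route avoids citing the class-group computation. Either way the content is the same.
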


\begin{proof} (See also \cite{Alexeev94} and \cite{AM03}) Let $\nu :X'\to X$ be a
  partial resolution extracting all divisors of discrepancy $a_E(X,B)<0$, in
  particular $X'$ has at most du Val singularities which are not contained in the
  support of $B'$ where $K_{X'}+B'=\nu ^*(K_X+B)$. By the classification of klt
  singularities \cite{Alexeev81}, the weights of each curve in the corresponding
  graph are bounded by $2/\epsilon$ (cf.~\cite[Proof of 7.5]{Alexeev94}) and so there
  are only finitely many possibilities for the corresponding graph. Let $G=K_X$ or
  $G$ be a component of the support of $B$ and $G'$ its strict transform. Then we may
  write $\nu ^* G=G'+\sum e_iE_i$ where the $E_i$ are exceptional divisors and the
  denominators of the $e_i$ divide $t=|{\rm det}(E_k\cdot E_{k'})|$. But then $t(
  G'+\sum e_iE_i)$ is integral.  Since $X'$ has only du Val singularities, $t\nu ^*G$
  is Cartier.  By the Basepoint-free theorem $tG$ is Cartier (cf. the proof of
  \cite[4.7]{AM03}).
\end{proof}

\end{subsection}

\begin{subsection}{Proof of Theorem~\ref{t-dcc}}
  We follow some ideas from \cite{AM03} and \cite{Alexeev94} applying techniques from
  \cite{BHMM11}. We may assume that $\mathscr C \supset \{ 1-\frac 1 m|m\in \N\}\cup \{1\}$.
  Note that it suffices to prove the theorem for log canonical pairs. To see this,
  consider an slc model $(X,B)$ and its normalization $\nu :\cup X_i\to X$. Writing
  $K_{X_i}+B_i=(\nu |_{X_i})^*(K_X+B)$, we have log canonical models $(X_i,B_i)$ such
  that ${\rm coeff}(B_i)\in \mathscr C$ and $(K_X+B)^2=\sum (K_{X_i}+B_i)^2$. The
  claim now follows easily since if $\mathscr D$ is a DCC set, then so is $\mathscr D
  '=\{ \sum d_i|d_i\in \mathscr D\}$.

  Suppose now, by way of contradiction, that $(X_m,B_m)$ is a sequence of slc
  surfaces defined over the algebraically closed field $L_m$ of characteristic
  $p_m>0$, such that $\coeff(B_m)\subseteq \mathscr C$ and
  \begin{equation}\label{eq-a}
    {\rm vol}(K_{X_m}+B_m)>{\rm  vol}(K_{X_{m+1}}+B_{m+1}).
  \end{equation} 
  In particular we may fix a constant $V>0$ such that $ {\rm vol}(K_{X_m}+B_m)\leq V$
  for all $m\in \N$.

  Passing to a log resolution, we may assume that $(X_m,B_m)$ is an snc pair. In
  fact, given a birational morphism $X'_m\to X_m$ let $B'_m$ be the strict transform
  of $B_m$ plus the exceptional divisor 
  so that ${\rm vol }(K_{X'_m}+B'_m)={\rm vol}(K_{X_m}+B_m)$ (cf.\ Proposition
  \ref{p-vols}) and $\coeff(B'_m)\subseteq \CC$. Then we replace $(X_m,B_m)$ by
  $(X'_m,B'_m)$.  Since $(X_m,B_m)$ is a snc pair and bigness is an open condition,
  replacing the coefficients that equal 1 by $1-\frac 1 r$ for some $r\gg 0$, we may
  assume that $(X_m,B_m)$ is klt.

  \begin{claim}\cite[7.6]{Alexeev94} 
    We may assume that the pairs $(X_m,B_m)$ are log birationally bounded, i.e.,
    there exists a constant $d>0$ and birational maps $f_m:X_m \dasharrow Z_m$ and
    very ample divisors $H_m$ on $Z_m$ such that $H_m^2\leq d$ and $H_m\cdot
    B_{Z_m}\leq d$ where $B_{Z_m}$ is the sum of the strict transform of $B_m$ and
    the $Z_m\dasharrow X_m$ exceptional divisors.
  \end{claim}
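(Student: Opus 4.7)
The plan is to verify the hypotheses of Theorem~\ref{t-bb} for a suitable birational modification of each pair $(X_m,B_m)$. Since each $(X_m,B_m)$ is a klt surface with $K_{X_m}+B_m$ big and coefficients in a DCC set, Lemma~\ref{l-c} provides a uniform $\delta>0$ such that $K_{X_m}+(1-\delta)B_m$ is also big. Because MMP and abundance hold for surfaces in arbitrary characteristic, one can run a $(K+(1-\delta)B)$-MMP and pass to a birational model $\psi_m\colon X_m\dasharrow Y_m$ on which $L_m:=K_{Y_m}+(1-\delta)B_{Y_m}$ is semi-ample and big, with $\vol(L_m)\le V$. Since $B_{Y_m}$ contains the image of $B_m$ and birational maps compose, log birational boundedness of $(Y_m,B_{Y_m})$ will imply that of $(X_m,B_m)$.

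Second, I plan to invoke Corollary~\ref{c-b} to produce effective birationality on $Y_m$. Choose $N_m\in\N$ so that $D_m:=N_mL_m$ is Cartier on $Y_m$. Because $(1-\delta)B_{Y_m}\ge 0$ we have $K_{Y_m}\le L_m$, hence $\vol(K_{Y_m})\le L_m^2\le D_m^2$. Corollary~\ref{c-b} then shows that $|K_{Y_m}+qD_m|$ defines a birational morphism for every $q\ge 18$; equivalently, $|K_{Y_m}+q'(K_{Y_m}+(1-\delta)B_{Y_m})|$ is birational for $q'=18N_m$. The coefficients of $(1-\delta)B_{Y_m}$ lie in $(1-\delta)\CC$, whose positive elements are bounded below by the uniform constant $\delta':=(1-\delta)\min(\CC\cap(0,1])>0$, and $\vol(q'(K_{Y_m}+(1-\delta)B_{Y_m}))\le (q')^2V$.

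With these inputs Theorem~\ref{t-bb} applied to $(Y_m,(1-\delta)B_{Y_m})$ yields log birational boundedness, \emph{provided} the constant $A=(q')^2V$ is uniform in $m$, i.e.\ the Cartier index $N_m$ of $L_m$ is bounded independently of $m$. This uniform Cartier bound is the main obstacle: by Lemma~\ref{l-d} it suffices to prove that the pairs $(Y_m,(1-\delta)B_{Y_m})$ are uniformly $\epsilon$-klt together with a uniform upper bound on the number of exceptional divisors of negative log discrepancy. Uniform $\epsilon$-kltness is not automatic for arbitrary klt surfaces and is the technical heart of the argument. I plan to obtain it by contradiction: passing to an ultraproduct $k=[L_m]$ via the internal scheme functor of Claim~\ref{constr:int-functor} and Lemma~\ref{l-um} to get a limiting pair over $k$, and deriving a contradiction with the DCC hypothesis on $\CC$ through an analysis of the extracted Fano/Mori-fibre structures analogous to the arguments of Lemmas~\ref{l-a} and~\ref{l-b}.
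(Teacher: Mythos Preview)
Your overall strategy (Lemma~\ref{l-c} $\Rightarrow$ run an MMP $\Rightarrow$ Corollary~\ref{c-b} $\Rightarrow$ Theorem~\ref{t-bb}) is exactly the paper's, and you correctly isolate the uniform Cartier index of $L_m$ as the crux. However, you misdiagnose the obstruction and your proposed fix is circular.

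First, uniform $\epsilon$-kltness is \emph{not} the difficulty. At this point in the proof $(X_m,B_m)$ has already been replaced by an snc pair, so $(X_m,(1-\delta)B_m)$ is a two-dimensional snc pair with all coefficients $\le 1-\delta$; such a pair is automatically $\epsilon$-lc for an $\epsilon$ depending only on $\delta$, and running the $(K+(1-\delta)B)$-MMP only increases discrepancies. So $(Y_m,(1-\delta)B_{Y_m})$ is uniformly $\epsilon$-klt for free; no ultraproduct argument is needed. In fact your ultraproduct plan is circular: Proposition~\ref{p-ultra} requires the $Y_m$ to be degree bounded in order to produce a limiting variety over $k$, which is precisely what the Claim is trying to establish.

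The actual gap is twofold. Lemma~\ref{l-d} only makes $NK_{Y_m}$ and $N\cdot(\text{each component of }B_{Y_m})$ Cartier; to get $N(K_{Y_m}+(1-\delta)B_{Y_m})$ Cartier you also need the \emph{denominators of the coefficients} to be bounded, and $(1-\delta)\CC$ need not have this property. The paper handles this by observing that once Lemma~\ref{l-c} provides $\delta$, one may round the coefficients of $(1-\delta)B_m$ down to a fixed \emph{finite} set $\CC'\subset\Q$ while preserving bigness, obtaining $D_m\le B_m$ with $\coeff(D_m)\subset\CC'$. Secondly, to bound the number of exceptional divisors of negative discrepancy (the $k$ in Lemma~\ref{l-d}) one must bound the number of components of $B_m$; the paper imports this from \cite[7.3]{Alexeev94}. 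With both ingredients in hand the paper applies Lemma~\ref{l-d} on the log canonical model $Z_m$ of $(X_m,D_m)$ to get a uniform $N$ with $N(K_{Z_m}+(f_m)_*D_m)$ ample and Cartier, and then Corollary~\ref{c-b} and Theorem~\ref{t-bb} finish exactly as you outlined.
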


  By Lemma \ref{l-c} it follows easily that there is a finite set of rational numbers
  $\mathscr C'$ depending only on $\mathscr C$ and divisors $0\leq D_m\leq B_m$ such
  that $K_{X_m}+D_m$ is big and $\coeff(D_m)\subseteq \mathscr C '$. By
  \cite[7.3]{Alexeev94} we may also assume that the number of components of $B_m$ is
  bounded by a constant (depending only on $\mathscr C$). Let $\mu _m:X_m\to X'_m$ be
  a minimal model for $K_{X_m}+D_m$ and $D'_m=(\mu _m)_*D_m\leq B'_m=(\mu
  _m)_*B_m$. Note that $K_{X'_m}+D'_m$ is klt and big.  Let $f'_m:X'_m\to Z_m$ be the
  be the corresponding log canonical model for $K_{X_m}+D_m$ and $f_m:X_m\to Z_m$ the
  induced morphism. Since the number of components of $D_m$ is bounded, it follows
  easily that the number of divisors $E$ over $Z_m$ of discrepancy
  $a_E(Z_m,(f_m)_*D_m)<0$ is bounded from above.  By Lemma \ref{l-d}, there exists an
  integer $N>0$ depending only on $\CC$ and $V$ such that $G_m=N(K_{Z_m}+(f_m)_*D_m)$
  is ample and Cartier. By Corollary \ref{c-b}, $|K_{X_m}+qf_m^*G_m|$ is birational for
  all $q\geq 18$ and hence so is $|K_{X_m}+18N(K_{X_m}+D_m)|$ .  Since $B_m\geq D_m$,
  it follows that $|K_{X_m}+18N(K_{X_m}+B_m)|$ is birational. Since ${\rm
    vol}(18N(K_{X_m}+B_m))\leq (18N)^2V$, by Theorem \ref{t-bb} (with $q=18N$) it follows
  that the pairs $(X_m,B_m)$ are log birationally degree bounded.

  \begin{claim} 
    We may assume that $f_m:X_m\to Z_m$ is a morphism given by a finite sequence of
    blow ups along smooth strata of $(Z_m,\wh B_{Z_m})$ where $\wh
    B_{Z_m}=(B_{Z_m})_{\rm red}$ and that $(Z_m, \wh B_{Z_m})$ is an \sncp and is
    degree bounded.
  \end{claim}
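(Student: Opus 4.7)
The plan is to proceed in three steps: (i) take a uniform log resolution of the family $\{(Z_m, \wh B_{Z_m})\}$; (ii) lift this resolution to a compatible modification of $X_m$; and (iii) factor the resulting birational morphism $X_m \to Z_m$ as a sequence of blowups at points. For step (i), by the preceding claim $\{(Z_m, \wh B_{Z_m})\}$ forms a degree bounded family, hence is parametrized (via the embeddings $|H_m|$) by finitely many components of a Hilbert scheme of finite type. Log resolution of surface pairs is algorithmic and, within a bounded family, terminates in a uniformly bounded number of point blowups. Replacing $Z_m$ by its log resolution (still denoted $Z_m$) and $\wh B_{Z_m}$ by the strict transform plus the reduced exceptional locus, we obtain an snc pair. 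Pulling back a sufficiently large multiple of $H_m$ and subtracting a small rational multiple of the new exceptional locus gives a very ample divisor on the resolution whose self-intersection and intersection with the boundary remain uniformly bounded, so the family is still degree bounded with a possibly enlarged constant $d'$.

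For step (ii), resolve the indeterminacies of the birational map $X_m \dasharrow Z_m$ by a common blowup $W_m \to X_m$, which on the smooth surface $X_m$ can be realized as a finite sequence of point blowups. Replace $X_m$ by $W_m$ and $B_m$ by its strict transform together with the exceptional divisors weighted by the appropriate coefficients of $\mathbf L_{B_m}$; by Proposition \ref{p-vols}, $\vol(K_{X_m}+B_m)$ is preserved, and the resulting coefficients lie in a DCC set depending only on $\CC$. For step (iii), apply the classical Zariski--Castelnuovo theorem: every birational morphism between smooth projective surfaces factors as a finite sequence of blowups at closed points. At each stage we include the new exceptional curve in the boundary of the target; since blowing up a closed point on a smooth surface with reduced snc boundary yields an snc pair, and in dimension two every closed point is a smooth stratum of such a pair (a node of the boundary, a smooth point of a component, or a point off the boundary), each blowup is along a smooth stratum as required.

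The main obstacle will be step (i): performing log resolution uniformly across the bounded family while preserving degree boundedness. Since surface resolution proceeds by successive blowups at non-snc points followed by normalization, and the number of operations is controlled by $H_m^2$ and $H_m \cdot \wh B_{Z_m}$---both uniformly bounded by hypothesis---the process terminates in uniformly bounded time. The new very ample divisor on the resolved surface is then built from a uniform multiple of the pullback of $H_m$ minus a controlled multiple of the new exceptional locus, and this construction is compatible across the Hilbert scheme parametrizing the family. With this in hand, steps (ii) and (iii) are essentially formal.
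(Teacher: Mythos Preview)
Your step (iii) contains a genuine gap. Under the standard definition used in the paper (and in \cite{HMX13}), the smooth strata of an snc surface pair $(Z_m,\wh B_{Z_m})$ with $\wh B_{Z_m}=\sum D_i$ are $Z_m$ itself, the components $D_i$, and the nodes $D_i\cap D_j$; an arbitrary closed point off the boundary, or a general point of a single $D_i$, is \emph{not} a stratum. Hence the Zariski--Castelnuovo factorization of $X_m\to Z_m$ into point blowups does not in general exhibit $f_m$ as a sequence of blowups along strata. This is not a matter of terminology: immediately after the claim, the paper uses that the strata of $(Z_m,\wh B_{Z_m})$ are in bijection with the strata of a single pair $(Z,\wh B)$ over $k=[L_m]$, so that each blowup on the $Z_m$ side can be mirrored by blowing up the corresponding stratum of $(Z,\wh B)$. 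Arbitrary closed points of $Z_m$ have no counterpart over $k$, so your factorization would be useless for the rest of the argument.

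The paper proceeds differently on both fronts. For (i), instead of uniform resolution via Hilbert schemes over varying $L_m$, it invokes the ultraproduct machinery: by Proposition~\ref{p-ultra} the degree-bounded family yields a single pair $(Z,\wh B)$ over $k$ with $(Z,\wh B)_{\rm int}=([Z_m],[\wh B_{Z_m}])$; one log resolution $Z'\to Z$ then induces log resolutions $Z'_m\to Z_m$ for almost all $m$, automatically degree bounded. For the replacement of $X_m$, the paper does \emph{not} attempt to factor the given morphism. It constructs a new model $X''_m\to Z_m$ by a finite sequence of blowups along strata chosen so that every divisor $E$ on $X_m$ with $a_E(Z_m,B_{Z_m})<0$ becomes a divisor on $X''_m$; for a klt snc surface pair such divisors have center a node, so strata blowups suffice. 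With $B''_m$ defined as the strict transform of $B_m$ plus the appropriate exceptionals (coefficient $1-1/r$), Proposition~\ref{p-vols} gives $\vol(K_{X_m}+B_m)=\vol(K_{X''_m}+B''_m)$, and one replaces $(X_m,B_m)$ by $(X''_m,B''_m)$. The key point you are missing is that only the negative-discrepancy exceptional divisors affect the volume, and those are exactly the ones accessible via strata blowups.
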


  \begin{proof}
    Let $([Z_m],[\wh B_{Z_m}])$ be the internal pair associated to the sequence of
    pairs $(Z_m,\wh B_{Z_m})$ where $\wh B_{Z_m}=(B_{Z_m})_{\rm red}$.  Since $Z_m$
    and $\wh B_{Z_m}$ are degree bounded, it follows by Proposition \ref{p-ultra}
    that there exists a pair $(Z, \wh B)$ defined over $k=[L_m]$ such that $(Z, \wh
    B)_{\rm int}=([Z_m],[\wh B_{Z_m}])$.  Let $\nu :Z'\to Z$ be a log resolution of
    $(Z,\wh B)$ and $\wh B'=\nu ^{-1}_*B+{\rm Ex}(\nu)$. If $(Z',\wh B')_{\rm
      int}=([Z_m'],[\wh B'_{Z'_m}])$, then it is easy to see that $Z'_m$ and $\wh
    B'_{Z'_m}$ are degree bounded (for almost all $m\in \N$).  Replacing $(X_m,B_m)$
    by an appropriate birational model $(X'_m,B'_m)$, we may assume that
    $f'_m:X'_m\to Z'_m$ is a morphism with $(f'_{m})_*(B'_m)\leq \wh B'_{Z'_m}$.
    Replacing $(X_m,B_m)$ by $(X'_m,B'_m)$ and $X_m\to Z_m$ by $X'_m\to Z'_m$ we may
    assume that $f_m$ is a morphism, $(Z_m,B_{Z_m})$ is an \sncp and is degree
    bounded.

    Let $(Z,\wh B)$ be the projective pair (over $k$) defined above so that $(Z, \wh
    B)_{\rm int}=([Z_m],[\wh B_{Z_m}])$ where as above $\wh B$ and $\wh B_{Z_m}$
    denote the reduced divisors.  Let  $X''_m\to Z_m$ be a finite sequence of strata such that every divisor $E$ on $X_m$ 
    of discrepancy $a_E(Z_m,B_{Z_m})<0$ is a divisor on $X''_m$. Let $B''_m$ be the strict transform of $B_m$ plus the sum of all $X''_m\to Z_m$ exceptional divisors which are not also $X''_m\dasharrow X_m$ exceptional, taken with coefficient $1-1/r$ for some $r\gg 0$.  Then one can see
    easily that ${\rm vol}(K_{X_m}+B_m)={\rm vol}(K_{X''_m}+B''_m)$ (cf.
    Proposition \ref{p-vols}).  Replacing $(X_m,B_m)$ by the
    pair $(X''_m,B''_m)$, we
    may therefore assume that each $X_m$ is obtained from $Z_m$ via a
    finite sequence of blow ups along smooth strata of $(Z_m,B_{Z_m})$.
  \end{proof}

  For almost all $m\in \N$, the strata of $(Z_m,\wh B_{Z_m})$ are in one-to-one
  correspondence with the strata of $(Z,\wh B)$ (cf.\ \cite[3.8]{BHMM11}).
  Therefore, we define $(X^m,B^m)$ by blowing up the corresponding strata on $(Z,\wh
  B)$ and choosing the coefficients of $B^m$ to match those of $B_m$.  Let $\nu$ be
  any divisorial valuation over $Z$.  Since the coefficients belong to a DCC set,
  after passing to a subsequence, we may assume that the sequence $ \mathbf
  M_{B^m}(\nu)$ is non decreasing and hence that $\lim \mathbf M_{B^m}(\nu)$ exists.
  Notice that if $ \mathbf M_{B^m}(\nu)\ne 0$, then $\nu$ corresponds to either a
  component of $\wh B$ or to a divisor exceptional over $Z$. If moreover $ \mathbf
  M_{B^m}(\nu)\not\in \{ 0, 1\}$ then the corresponding divisor is obtained by
  blowing up $Z$ along some strata of $\wh B$. Therefore, there are only countably
  many divisorial valuations $\nu$ for which $\mathbf M_{B^m}(\nu)\ne \mathbf
  M_{B^k}(\nu)$.  By a standard diagonalization argument, we may assume that after
  passing to a subsequence, there is a well defined $b$-divisor over $Z$ defined by
  $\mathbf B(\nu )=\lim \mathbf M_{B^m}(\nu)$ for any valuation $\nu$ over $Z$. Let
  $\Phi :=\mathbf B _Z$. Let $B^m_Z$ be the pushforward of $B^m$ to $Z$, then
  $\mathbf B _Z=\lim B^m_Z$ Since $\mathscr C$ satisfies the DCC, it follows that
  \begin{equation}\label{eq-0} B^m_Z\leq \Phi\qquad {\rm for\ almost\ all \ }m\in
    \N.\end{equation}
  \begin{claim}\label{c-1} We may assume that \begin{equation}\label{e-cut}
      \mathbf L_{\Phi }\leq \mathbf B,\end{equation}
    where $\Phi =\mathbf B _Z$.
  \end{claim}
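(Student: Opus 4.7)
The plan is to establish $\mathbf L_\Phi \leq \mathbf B$ valuation by valuation, disposing of the easy cases directly and adjusting the auxiliary parameter $r$ from the previous construction to handle the critical case. First I would observe that $(Z, \Phi)$ is log canonical: since $(Z, \wh B)$ is an snc pair and $\Phi \leq \wh B$ has coefficients in $[0,1]$, the pair $(Z, \Phi)$ is sub-snc, hence log canonical; and since $B^m_Z \nearrow \Phi$ monotonically in coefficients, the b-divisors converge monotonically $\mathbf L_{B^m_Z} \nearrow \mathbf L_\Phi$. If $\nu$ is a divisor of $Z$, then directly $\mathbf L_\Phi(\nu) = \mult_\nu(\Phi) = \mathbf B(\nu)$. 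If $\nu$ is exceptional over $Z$ but is never a divisor on any $X^m$ in the chosen subsequence, then $\mathbf M_{B^m}(\nu) = 1$ throughout, giving $\mathbf B(\nu) = 1 \geq \mathbf L_\Phi(\nu)$ by log canonicity.

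The remaining case is a valuation $\nu$ exceptional over $Z$ which becomes a divisor on $X^m$ for all large $m$. Since $X^m \to Z$ is a sequence of blow-ups along strata of the snc pair $(Z, \wh B)$, a direct computation on snc log resolutions shows $\mathbf L_\Phi(\nu) \leq \bigl(\sum_{j} \phi_{i_j}\bigr) - (k-1)$, where $F_{i_1}, \dots, F_{i_k}$ are the components of $\wh B$ through the center of $\nu$ on $Z$ and $\phi_{i_j} = \mult_{F_{i_j}}(\Phi)$. If that center is an lc center of $(Z, \Phi)$, equivalently all $\phi_{i_j} = 1$, I would arrange in the construction that the corresponding exceptional divisor is assigned coefficient $1$ in $B^m$ rather than $1 - 1/r$, yielding $\mult_\nu(B^m) = 1 \geq \mathbf L_\Phi(\nu)$. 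Otherwise some $\phi_{i_j} < 1$, and the DCC property of $\CC$ guarantees that the gap $1 - \phi_{i_j}$ is uniformly bounded below by a constant $\delta = \delta(\CC) > 0$; taking $r$ so large that $1/r < \delta$ then yields $\mult_\nu(B^m) = 1 - 1/r \geq 1 - \delta \geq \mathbf L_\Phi(\nu)$.

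The main technical obstacle is ensuring the uniformity of the choice of $r$ across all indices $m$ and all candidate valuations $\nu$; this is precisely supplied by the DCC hypothesis on $\CC$, which produces a single $\delta$ independent of the data. One also has to verify that replacing the coefficient $1 - 1/r$ by $1$ on exceptional divisors over lc centers preserves the volume identity $\vol(K_{X^m} + B^m) = \vol(K_{X_m} + B_m)$ that is used throughout. This follows from Proposition~\ref{p-vols}, since the modification is supported on divisors exceptional for $\pi\colon X^m \to Z$ and the new boundary remains bounded above by $\mathbf L_{B^m_Z, X^m}$, so the volume-comparison formulas continue to apply without change.
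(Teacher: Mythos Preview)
Your direct valuation-by-valuation approach is appealing but has a genuine gap, and it differs substantially from the paper's method.

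The paper does not attempt to verify $\mathbf L_\Phi \leq \mathbf B$ for the given $\mathbf B$; instead it invokes the \emph{cut} machinery of \cite[5.7]{HMX13}. One replaces $(Z,\mathbf B)$ by a reduction $(Z',\mathbf B')$ obtained via finitely many cuts, where a cut sets $\mathbf B' = \mathbf B \wedge \mathbf M_\Theta$ for $\Theta = \bigwedge_{\sigma\in\Sigma}(\mathbf L_{\Gamma_\sigma})_{Z'}$ and $\Gamma_\sigma = (\mathbf L_\Phi\wedge\mathbf B)_{Y_\sigma}$. The bulk of the paper's proof is checking that the parallel operation on the $B_m$ preserves $\vol(K_{X_m}+B_m)$, by squeezing $B'_m$ between $B_m\wedge(\mathbf L_{\Theta_m})_{X_m}$ and $B_m$ and applying Proposition~\ref{p-vols}. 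After the reduction, $\mathbf L_{\Phi'}\leq\mathbf B'$ holds by the general result of \cite{HMX13}.

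Your argument breaks down in Case~3. You implicitly assume that every exceptional divisor of $X^m\to Z$ appears in $B^m$ with coefficient $1-1/r$ (or, after your modification, $1$). But in the construction of $(X''_m,B''_m)$, the divisors that are exceptional for $X''_m\to Z_m$ \emph{and} for $X''_m\dasharrow X_m$ receive coefficient $0$. Such divisors arise unavoidably as intermediate blow-ups needed to reach the targets. For such a $\nu$ centered at a node with $\phi_1+\phi_2>1$ one gets $\mathbf B(\nu)=0<\phi_1+\phi_2-1\leq \mathbf L_\Phi(\nu)$, and no choice of $r$ repairs this. Separately, your justification for volume preservation when raising coefficients to $1$ is incorrect: you claim the new boundary stays below $\mathbf L_{B^m_Z,X^m}$, but an lc place of $(Z,\Phi)$ need not be an lc place of $(Z,B^m_Z)$ when $B^m_Z<\Phi$, so $\mathbf L_{B^m_Z}(\nu)<1$ while you want coefficient $1$. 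Finally, the assertion that DCC forces a uniform gap $1-\phi_{i_j}\geq\delta$ is false as stated (take $\CC=\{1-1/n\}$); the correct reason a gap exists is simply that $\Phi$ has finitely many components, but this does not rescue the argument from the coefficient-$0$ problem above. The cut operation is precisely what is needed to lower $\Phi$ in a controlled, volume-preserving way to accommodate these valuations.
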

\begin{proof}
  We follow the proof of \cite[5.7]{HMX13} checking that our choices do not affect
  the volume of $K_{X_m}+B_m$.  Let $(Z',\mathbf B')$ be the reduction of $(Z,\mathbf
  B)$ defined in \cite[5.7]{HMX13}, so that if $\Phi'=\mathbf B'_{Z'}$, then we have
  the inequality of b-divisors
  \begin{equation*}
    \mathbf L_{\Phi '}\leq \mathbf B'.
  \end{equation*}
  Recall that the reduction $(Z',\mathbf B')$ is given by a finite sequence of cuts
  where a cut is defined as follows: given a birational morphism of smooth projective
  varieties $\mu :Z'\to Z$ and a subset $\Sigma$ of the $\mu $ exceptional divisors,
  for every valuation $\sigma \in \Sigma$, let $\Gamma _\sigma = (\mathbf L_\Phi
  \wedge \mathbf B )_{Y_\sigma}$, where $Y_\sigma \to Z$ is the divisorial
  contraction of the divisor over $Z$ corresponding to $\sigma$ which defined in
  \cite[5.4]{HMX13} and $\Phi =\mathbf B _Z$. Let $\Theta = \wedge _{\sigma \in
    \Sigma}(\mathbf L_{\Gamma _\sigma})_{Z'}$, the minimum of the divisors $(\mathbf L_{\Gamma
  _\sigma})_{Z'}$.  The cut of $(Z, \mathbf B)$, associated to $Z' \to Z$ and
  $\Sigma$, is the pair $(Z', \mathbf B')$, where $\mathbf B' = \mathbf B\wedge
  \mathbf M_\Theta$, so that $\mathbf B' _{Z'}=\Theta \wedge \mathbf B_{Z'}$ and
  $\mathbf B'(\nu)=\mathbf B(\nu )$ for any valuation $\nu$ corresponding to an
  exceptional divisor over $Z'$. We may assume that $Z'\to Z$ is given by a finite
  sequence of blow ups along strata of $\wh B$ and so we let $Z'_m\to Z_m$ be
  obtained by the corresponding sequence of blow ups along strata of $B_{Z_m}$ for
  almost all $m\in \N$.
  After possibly blowing up $X^m$ and replacing $B^m$ by its strict transform plus
  the exceptional divisor, we may assume that $X^m\to Z$ factors via a morphism
  $X^m\to Z'$ and similarly we have morphisms $X_m\to Z'_m$ for almost all $m\in
  \mathbb N$.

  Now consider the divisors ${B'}^m$ on $X^m$ defined by ${B'}^m=B^m\wedge (\mathbf M
  _{\Theta ^m})_{X^m}$ where $\Theta ^m =\wedge _{\sigma \in \Sigma} (\mathbf L
  _{\Gamma ^m_\sigma})_{Z'}$, $\Gamma _\sigma ^m=(\mathbf L_{B^m_Z})_{Y_\sigma
  }\wedge B _{Y_\sigma }$ where $B^m_Z$ and $ B ^m_{Y_\sigma }$ are the pushforwards
  of $B^m$ to $Z$ and $Y_\sigma$. Then, as in the proof of \cite[5.7]{HMX13}, we may
  assume that $\mathbf B'=\lim \mathbf M_{{B'}^m}$.  Let $B'_m$ be the divisors on
  $X_m$ corresponding to ${B'}^m$. We will show that ${\rm vol}(K_{X_m}+B_m)={\rm
    vol}(K_{X_m}+B'_m)$.  Assuming this, we may replace $B_m$ by $B'_m$ and the claim
  follows.

  We define $\Phi _m=B_{Z_m}$ and \[\Gamma _{m,\sigma}=(\mathbf L_{\Phi
    _m})_{Y_{m,\sigma}}\wedge B_{Y_{m,\sigma}},\] where $B_{Y_{m,\sigma}}$ is the
  pushforward of $B_m$ to $Y_{m,\sigma}$.  Let $\Theta _m:=\wedge _{\sigma \in
    \Sigma}(\mathbf L_{\Gamma _{m,\sigma}})_{ Z'_m}$. It is easy to see that the
  divisors $B_{Z_m}$, $B_{Y_{m,\sigma}}$, $\Gamma _{m,\sigma}$ and $\Theta _m$
  correspond to the divisors $B^m_Z$, $B^m_{Y_\sigma}$, $\Gamma ^m_\sigma$ and
  $\Theta ^m$ so that we have \[B'_m=B_m\wedge (\mathbf M_{\Theta _m})_{X_m}.\] It
  then follows that
  \begin{equation}\label{e-vol}
    B_m\wedge (\mathbf L_{\Theta _m})_{X_m}\leq
    B'_m=B_m\wedge (\mathbf M_{\Theta _m})_{X_m}\leq B_m.
  \end{equation} 
  Thus, by \eqref{e-vol} and Proposition \ref{p-vols}
  \[
  {\rm vol}(K_{X_m}+B_m)={\rm vol}(K_{X_m}+B_m').
  \]

  Replacing $B_m$ by $B'_m$ the claim  follows.
  \end{proof}
  \begin{claim}\label{c-vol} 
    For almost all $m\in \N$ we have ${\rm vol}(K_{Z_m}+t\Phi _{Z_m})={\rm
      vol}(K_{Z}+t\Phi )$ for all $t\in [0,1]\cap \mathbb Q$.
  \end{claim}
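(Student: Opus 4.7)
My approach is to use that on a smooth projective surface the volume of a big $\R$-divisor is determined by finitely many intersection numbers via the Zariski--Fujita decomposition, and that these intersection numbers are preserved by the internal-scheme functor $\sfint$ (for almost all $m$). By the construction preceding the claim, the components of $\Phi$ are among the components of $\wh B$, and for almost all $m$ the components of $\wh B_{Z_m}$ correspond bijectively to those of $\wh B$ via $\sfint$. I define $\Phi_{Z_m}$ as the $\R$-divisor on $Z_m$ whose coefficient on each component is that of $\Phi$ on the corresponding component of $\wh B$, and note that $K_{Z_m}$ corresponds to $K_Z$ under the internal construction, since both $Z$ and the $Z_m$ are obtained from the same sequences of blow-ups of strata.

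The next step is to verify preservation of intersection numbers. For Cartier divisors $D,D'$ on $Z$ with internal counterparts $D_m,D'_m$ on $Z_m$, Riemann--Roch expresses $D\cdot D' = \chi(\OO_Z)-\chi(\OO_Z(-D))-\chi(\OO_Z(-D'))+\chi(\OO_Z(-D-D'))$, and the Euler characteristics on the right are preserved by $\sfint$ for almost all $m$ (cf.~\cite[3.5]{BHMM11}). Taking a fixed log resolution $\mu\colon W\to Z$ of $(Z,\Phi)$, the internally induced $\mu_m\colon W_m\to Z_m$ is a log resolution of $(Z_m,\Phi_{Z_m})$ for almost all $m$ (arguing as in Lemma~\ref{l-um}), and Proposition~\ref{p-vols} reduces the computation of $\vol(K_Z+t\Phi)$ to the self-intersection of the nef part of the Zariski--Fujita decomposition of $\mu^*(K_Z+t\Phi)$ on $W$, and similarly on $W_m$.

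The Zariski--Fujita decomposition of $\mu^*(K_Z+t\Phi)$ is determined by the finite configuration of curves eligible to appear in its negative part, their mutual intersection numbers, and their intersections with $\Phi$, via a linear system. Since all these intersection numbers and the coefficients of $\Phi$ are preserved by $\sfint$, the decomposition on $W_m$ matches that on $W$, and squaring the nef part gives $\vol(K_Z+t\Phi)=\vol(K_{Z_m}+t\Phi_{Z_m})$. The main obstacle is uniformity in $t\in[0,1]\cap\Q$: naively one obtains a set of good $m$'s in $\UU$ depending on $t$, and countable intersections of $\UU$-sets need not lie in $\UU$. This is resolved by observing that the Zariski--Fujita decomposition for every $t$ is controlled by the \emph{same} finite intersection-theoretic data, namely the intersection matrix of candidate negative components on $W$ together with the coefficients of $\Phi$, so a single set of $m$'s in $\UU$ on which these finitely many numerical invariants match suffices to give the equality for all rational $t$ simultaneously.
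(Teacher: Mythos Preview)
Your approach via the Zariski decomposition is in the right spirit but leaves a genuine gap. You assert that the negative part of the Zariski decomposition of $\mu^*(K_Z+t\Phi)$ draws from a \emph{fixed finite} set of ``candidate negative components'' on $W$ as $t$ ranges over $[0,1]\cap\Q$, and that matching the finitely many intersection numbers among these suffices. But you do not justify this finiteness. The curves in the negative part need not lie in the strict transform of $\Phi$ or in the $\mu$-exceptional locus; a priori they can be arbitrary curves of negative self-intersection on $W$, and a smooth projective surface may carry infinitely many such curves. Without knowing in advance a finite list containing all possible supports, your uniformity-in-$t$ argument collapses: the ``finitely many numerical invariants'' you propose to match under $\sfint$ are not actually finitely many. (You also do not address those $t$ for which $K_Z+t\Phi$ is not pseudoeffective, where no Zariski decomposition exists and both volumes are zero; this case needs a separate, if easy, argument.)

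The paper supplies exactly the missing finiteness, packaged as finiteness of minimal models: there is a finite list of birational contractions $\psi^i:Z\to W^i$ such that for every $t\in[0,1]$ one of the $\psi^i$ is a minimal model of $(Z,t\Phi)$, and then $\vol(K_Z+t\Phi)=(K_{W^i}+t\psi^i_*\Phi)^2$. Lemma~\ref{l-um} transfers each $\psi^i$ to a minimal model $\psi^i_m:Z_m\to W^i_m$ for almost all $m$, with the same intersection numbers, and since $I$ is finite one set in $\UU$ works for all $t$ at once. On a surface this finiteness of models is equivalent to the finiteness of Zariski supports you need, so once you import it your argument becomes correct---but at that point it is essentially the paper's proof rephrased in Zariski-decomposition language, with extra bookkeeping (and your appeal to Proposition~\ref{p-vols} for ``$\vol=P^2$'' is not quite what that proposition says; the fact you want is just the standard property of the Zariski decomposition).
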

  \begin{proof}
    There are finitely many birational morphisms $\{\psi ^i:Z\to W^i\}_{i\in I}$ such
    that for any $t\in [0,1]$, there exists an $i\in I$ such that $\psi ^i$ is a
    minimal model for $K_{Z}+t\Phi $. Let $[\psi ^i_m]:[Z_m]\to [W^i_m]$ be the
    corresponding morphism of internal schemes. It is easy to see that for almost all
    $m\in \N$ this is a minimal model for $K_{Z_m}+t\Phi _{Z_m}$ and
    $(K_{W^i_m}+t\psi ^i_{m,*} \Phi _{Z_m})^2=(K_{W^i}+t\psi _* \Phi_{Z})^2$ (cf.\
    Lemma \ref{l-um}). Therefore, the claim follows.
  \end{proof}
  
  Now we observe that
  \[
  {\rm vol}(K_{X_m}+B_m)\leq {\rm vol}(K_{Z_m}+B_{Z_m}) \leq {\rm vol}(K_{Z_m}+\Phi
  _{Z_m}) ,
  \] 
  where the first inequality follows as $B_{Z_m}$ is the pushforward of $B_m$, and
  the second as $B_{Z_m}\leq \Phi _{Z_m}$ cf.~\eqref{eq-0}.

  On the other hand, for any $\epsilon >0$, the pair $(Z, (1-\epsilon )\Phi )$ is klt
  with simple normal crossings and hence there is a terminalization $h:Y\to Z$ (given
  by a finite sequence of blow ups along strata of $(Z, (1-\epsilon )\Phi )$) so that
  $(Y, \Psi:=\mathbf L_{(1-\epsilon)\Phi,Y} )$ is terminal. We have that for some
  $\lambda>0$,
  \begin{equation}\label{e-leq}
    \Psi \leq (1-\lambda)\mathbf L_{\Phi,Y}\leq \mathbf L_{\Phi,Y}\leq \mathbf B_Y
  \end{equation}
  where the last inequality follows from \eqref{e-cut}.  For almost all $m\in \N$ we
  may consider $h_m:Y_m\to Z_m$ given by the same sequence of blow ups along strata
  of $(Z_m,\Phi _{Z_m})$. Then, denoting by $\mathbf B _{Y_m}$ and $\Psi _m$ the
  divisors on $Y_m$ corresponding to $\mathbf B _Y$ and $\Psi$, since $\mathbf B_Y=
  \lim \mathbf M _{B^m,Y}$, comparing coefficients of divisors on $Y$, by
  \eqref{e-leq}, for infinitely many $m\in \N$ we have $\Psi \leq \mathbf M_{B^m,Y}$
  and hence also $\Psi _m\leq \mathbf M _{B_m,Y_m}.$ It follows that then
  \begin{multline}\label{e-long}    
    \begin{aligned}
      {\rm vol}( K_{Z}+(1-\epsilon)\Phi ) ={\rm vol}(
      K_{Z_m}+(1-\epsilon)\Phi_{Z_m})={\rm vol}( K_{Y_m}+\Psi _m) \leq  \hfill \\
      \leq {\rm vol}( K_{Y_m}+\mathbf M _{B_m,Y_m})= {\rm vol}(K_{X_m}+B_m),\hskip-6em
    \end{aligned} 
  \end{multline}
  where the first (in)equality follows from Claim \ref{c-vol}, the second since
  $(Y_m,\Psi _m)$ is a terminalization of $(Z_m, (1-\epsilon)\Phi_{Z_m})$ (observe
  that $\Psi _m=(\mathbf L_{(1-\epsilon)\Phi_{Z_m}})_{Z_m}$ and apply Proposition
  \ref{p-vols}), the third since $\Psi _m\leq \mathbf M _{B_m,Y_m}$, and the fourth
  by Proposition \ref{p-vols}.  Taking the limit as $\epsilon \to 0$, by \eqref{eq-a}
  we obtain
  \[
  {\rm vol}( K_{Z}+\Phi)\leq \lim {\rm vol}(K_{X_m}+B_m)< {\rm
    vol}(K_{X_m}+B_m).
  \] 
  Combining this with the above equations and Claim \ref{c-vol}, we
  have that 
  \begin{equation}\label{e-eq}
    {\rm vol}( K_{Z}+\Phi)< {\rm
      vol}(K_{X_m}+B_m) \leq {\rm vol}(K_{Z_m}+\Phi _{Z_m})={\rm
      vol}(K_{Z}+\Phi)
  \end{equation} 
  for infinitely many $m$. This is the required contradiction and it completes the
  proof of Theorem~\ref{t-dcc}. \qed
\end{subsection}

\begin{subsection}{Proof of Theorem~\ref{t-m}} 
  It suffices to show that for any sequence of projective log canonical surfaces
  $(X_m,B_m)$ with fixed volume $(K_{X_m}+B_m)^2=v$ and ${\rm coeff}(B_m)\subseteq
  \mathscr C$, there exists an integer $r>0$ such that $r(K_{\bar X_m}+\bar B_m)$ is
  very ample where $\phi _m:X_m\to \bar X _m$ is the log canonical model of
  $(X_m,B_m)$ and $\bar B _m=\phi _{m,*}B_m$. Arguing by contradiction (and passing
  to a subsequence), assume that $m!(K_{\bar X_m}+\bar B_m)$ is not very ample for
  all $m>0$.  Following the notation in the above proof, let $Z\to W$ be the log
  canonical model of $(Z,\Phi=\mathbf B _{Z})$. Let $[Z_m]\to [W_m]$ be the
  corresponding morphism of internal schemes, so that for almost all $m\in \N$ we
  have morphisms $h_m:Z_m\to W_m$ which are log canonical models for $({Z_m},\Phi
  _{Z_m})$. We have \[{\rm vol}(K_{X_m}+B_m)={\rm vol}(K_{Z_m}+\Phi _{Z_m})\geq {\rm
    vol}(K_{Z_m}+B_{Z_m})\geq {\rm vol}(K_{X_m}+B_m),\] where the first (in)equality
  follows since all inequalities in \eqref{e-eq} are actually equalities, the second
  since $\Phi _{Z_m}\geq B_{Z_m}$ and the last as $K_{Z_m}+B_{Z_m}$ is the
  pushforward of $K_{X_m}+B_m$.

  Since $K_{Z_m}+B_{Z_m}$ is big and has a log canonical model, and $\Phi _{Z_m}\geq
  B_{Z_m}$, it follows by \cite[2.2.2]{HMX15} that $Z_m\to W_m$ is a log canonical
  model for $(Z_m,B_{Z_m})$.  In particular, $(h_m)_*B_{Z_m}=(h_{m})_*\Phi _{Z_m}$ is
  rational and hence so is $h_* \Phi $. But then by the result over the fixed field
  $k$ (see \cite[9.2]{Alexeev94}), we know that there is an integer $r$ depending only on $\const={\rm
    vol}(K_{X_m}+B_m)$ and $\mathscr C$ such that $r(K_W+h_*\Phi )$ is Cartier and
  very ample. But then $r(K_{W_m}+h_{m,*}\Phi )$ is Cartier and very ample for
  infinitely many $m>0$. This is the required contradiction and the assertion of
  Theorem~\ref{t-m} follows. \qed
\end{subsection}

\begin{subsection}{Proof of Corollary~\ref{c-m}}
  We may assume that $1-\epsilon \in \mathscr C$.  It suffices to show that any
  sequence of $\epsilon$-log canonical projective pairs $(X_m,B_m)$ with $\dim
  X_m=2$, ${\rm coeff}(B_m)\in \mathscr C$, $K_{X_m}+B_m$ nef and big and ${\rm
    vol}(K_{X_m}+B_m)\leq v$ is degree bounded.

  Following the proof of Theorem~\ref{t-dcc}, we may assume that $(Z,\mathbf B_Z)$ is
  an \sncp with coefficients $\leq 1-\epsilon$. Replacing $Z$ by an appropriate
  birational model, we may in fact assume that $(Z,\mathbf B_Z)$ is terminal and
  hence so are $(Z_m, B_{Z_m})$. But then ${\rm vol}(K_{X_m}+B_m)=\vol
  (K_{Z_m}+B_{Z_m})$ for almost all $m\in \N$ by Proposition \ref{p-vols} and so we
  may assume that $(X_m,B_m)=({Z_m},B_{Z_m})$. Notice that we have replaced $X_m$ by
  an appropriate birational model and $B_m$ by its strict transform plus the
  exceptional divisors with coefficient $(1-\epsilon)$, hence $K_{X_m}+B_m$ may no
  longer be nef. Let $B^m$ be the divisors on $Z$ corresponding to $B_m$ on
  $X_m$. Since the support of $B^m$ has finitely many components and $\CC$ is a DCC
  set, after passing to a subsequence, we may assume that $B^m\leq B^{m+1}\leq
  B^{m+2}\leq \ldots \lim B^i=B^\infty$.  Let $B^\infty _m$ be the corresponding
  divisors on $X_m$, so that $B^\infty _m\geq B_m $. We claim that $K_Z+B^\infty$ is
  big. If this were not the case, then $Z$ would be covered by curves $C$ with
  $(K_Z+B^\infty )\cdot C\leq 0$. But then, the same would be true for $(X_m,B_m)$ as
  for almost all $m\in \N$ we have\[0\geq (K_Z+B^\infty )\cdot C=(K_{X_m}+B ^\infty
  _m)\cdot C_m\geq (K_{X_m}+B _m)\cdot C_m,\] where $(C)_{\rm int}=[C_m]$. This
  contradicts the fact that $K_{X_m}+B _m$ is big.  Since being big is an open
  condition, it follows that $K_Z+(1-\delta )B^\infty $ is big for all $0<\delta \ll
  1$ and we may assume that $(1-\delta )B^\infty\leq B^m\leq B^\infty$.  The set of
  all minimal/canonical models $Z\to W$ for pairs $(Z,G)$ with $ (1-\delta
  )B^\infty\leq G\leq B^\infty$ is bounded.  Arguing as in the proof of Claim
  \ref{c-vol} the corresponding rational maps $Z_m\to W_m$ give minimal/canonical
  models for $(X_m,B_m)$ for almost all $m\in \N$. Corollary~\ref{c-m} follows
  easily.
\end{subsection}

\begin{subsection}{Proof of Theorems~\ref{t-ssr} and \ref{t-ssr-three}}

  In the sequel we will use the following notation: If $f_m:X_m\to S_m$ is a morphism
  of schemes and $s\in S_m$ a point, then $X_{m,s}$ denotes the fiber
  $(X_m)_s=f_m^{-1}(s)$. More generally, if $S'_m\to S_m$ is a morphism and $s\in
  S'_m$ a point, then $X_{m,s}$ denotes the fiber product $X_m\times
  _{S_m}\{s\}$.

\begin{theorem}\label{t-ssr-two}
  Fix a constant $\const\in\N$, a DCC set $\CC \subset [0,1]\cap\Q$. For each $m\in
  \Z$, let $L_m$ be an algebraically closed field of $\chara L_m= p_m>0$ such that
  $\lim p_m=\infty$. Let $k=[[L_m]]$, $S$ a smooth $1$-dimensional scheme defined
  over $k$, and $S_{\rm int}=[S_m]$ the corresponding internal scheme. Further let
  $(X_m,B_m)$ be a pair defined over $L_m$ such that $\dim X_m=3$, and let
  $f_m:X_m\to S_m$ be a projective morphism with connected fibers. Assume that for
  each $m\in\Z$, $\coeff(B_{m,\eta })\subseteq \CC$, $(X_{m,\eta }, B_{m,\eta })$ is
  semi-log canonical, and $K_{X_{m,\eta }}+ B_{m,\eta }$ is ample with $(K_{X_{m,\eta
    }}+ B_{m,\eta })^2=\const$ where $\eta$ denotes the generic point of $S_m$.

  Then there exist a finite separable morphism $S'\to S$, and a projective semistable
  family of semi-log canonical models $(X',B')\to S'$ such that considering the
  corresponding internal objects, 
  for an infinite subset $V\subseteq \Z$ and for each $m\in V$ there exist an
  induced 
  separable finite morphism $\sigma_m:S'_m\to S_m$, a projective morphism $X'_m\to
  S'_m$, and a pair $(X'_m,B'_m)$ such that $K_{X'_{m,s}}+B'_{m,s}$ is ample,
  $(X'_{m,s},B'_{m,s})$ 
  is semi-log canonical (in particular reduced) for all $s\in S'_m$, and $(X'_{m,\eta
    '},B'_{m,\eta '})$ is isomorphic to $(X_{\eta '}, B_{\eta'})$ for where $\eta \in
  S_m$ and $\eta '\in S'_m$ are the generic points.
\end{theorem}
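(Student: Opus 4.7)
The plan is to apply the boundedness of Theorem~\ref{t-m} on each generic fiber to bound the family over $S$, then pass to characteristic zero via ultraproducts to invoke stable reduction, and finally descend the resulting family back to almost all $m$.

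First, Theorem~\ref{t-m} applied to the two-dimensional \slc models $(X_{m,\eta}, B_{m,\eta})$ over $L_m(S_m)$ yields a single integer $r>0$, independent of $m$, such that $r(K_{X_{m,\eta}}+B_{m,\eta})$ is very ample. The resulting embeddings into a fixed projective space have uniformly bounded degree, so by Proposition~\ref{p-ultra} the internal family $[(X_m,B_m) \to S_m]$ is the internalization of some projective family $f \colon X \to S$ with a boundary $B$ defined over $k = [L_m]$. Since $p_m \to \infty$, \L o\'s's theorem implies that $k$ has characteristic zero, so $(X_\eta, B_\eta)$ is an \slc model over the characteristic-zero field $K(S)$ with $(K_{X_\eta}+B_\eta)^2=\const$.

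Second, in characteristic zero the theory of stable reduction for families of \slc surfaces is available (Koll\'ar--Shepherd-Barron, Karu, and subsequent work). Applied to $(X_\eta,B_\eta)/K(S)$ it produces a finite separable base change $\sigma \colon S' \to S$ and a projective family $f' \colon X' \to S'$ with an effective boundary $B'$ such that $(X'_s, B'_s)$ is semi-log canonical and $K_{X'_s}+B'_s$ is ample for every $s \in S'$, and such that $(X'_{\eta'}, B'_{\eta'})$ is isomorphic to the base change of $(X_\eta, B_\eta)$ to $\eta'$.

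Finally, the constituents $\sigma$, $X'$ and $B'$ all have bounded complexity, using the uniform very ampleness of $r(K_{X'/S'}+B')$, so by Proposition~\ref{p-ultra} they arise as the internalizations of sequences $\sigma_m \colon S'_m \to S_m$, $X'_m \to S'_m$ and $B'_m \subset X'_m$. Separability and finiteness of $\sigma$ descend to almost all $m$ via \cite[3.5]{BHMM11}, Lemma~\ref{l-u} propagates the relative ampleness of $r(K_{X'/S'}+B')$, and Lemma~\ref{l-um} transfers the slc property. The main obstacle is the last step: an arbitrary closed point of $S'$ typically does not internalize to a closed point of any single $S'_m$, so a fiberwise version of Lemma~\ref{l-um} is required to ensure that $(X'_{m,s}, B'_{m,s})$ is slc on every fiber of $f'_m$, not merely at the generic point. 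After restricting to an infinite subset $V \subseteq \Z$ on which separability of $\sigma_m$, ampleness of $K_{X'_{m,s}}+B'_{m,s}$, and the slc condition hold simultaneously on every fiber, the conclusion follows.
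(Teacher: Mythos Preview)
Your overall strategy---uniform boundedness via Theorem~\ref{t-m}, passage to characteristic zero via ultraproducts, stable reduction there, and descent---is exactly the paper's. Two points need correction, however.

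First, Theorem~\ref{t-m} is stated for algebraically closed fields, and $L_m(S_m)=K(S_m)$ is not algebraically closed. The paper handles this by passing to the geometric generic fiber $(\ol X_{m,\eta},\ol B_{m,\eta})$ over $\ol{F}_m$; very ampleness of $r(K+B)$ there implies it over $F_m$ as well.

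Second, and more seriously, your claim that ``the internal family $[(X_m,B_m)\to S_m]$ is the internalization of some projective family $f\colon X\to S$'' is not justified by Proposition~\ref{p-ultra}. That proposition requires the sequence of varieties to have bounded degree, but Theorem~\ref{t-m} only bounds the two-dimensional generic fibers $X_{m,\eta}$, not the threefolds $X_m$ themselves (which are not even assumed projective over $L_m$, only over $S_m$). The paper therefore does \emph{not} attempt to internalize the total spaces: it applies Proposition~\ref{p-ultra} to the bounded surfaces $X_{m,\eta}$ over $F_m=K(S_m)$ to produce a pair $(X^\circ,B^\circ)$ over $F=[F_m]=K(S)$, identifies $F$ with $K(S)$, and then runs stable reduction over $S$ starting from this generic-fiber datum. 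Along the way one also uses that $r(K+B)$ Cartier forces $\coeff(B_{m,\eta})\subseteq\{a/r:0\le a\le r\}$, so after passing to an infinite subset the $\Q$-divisor $B^\circ$ (not just its support) internalizes correctly. Your concern about fiberwise slc in the descent step is legitimate; the paper is terse here, but once $(X',B')\to S'$ is constructed as a bounded projective family with $r(K_{X'/S'}+B')$ very ample and all fibers slc, the relevant conditions are expressible by bounded-degree data and transfer to almost all $m$.
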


\begin{proof}
  Let $F_m=K(S_m)$ be the field of rational functions of $S_m$, $\ol F_m$ its
  algebraic closure, and $(\ol X_{m,\eta},\ol B_{m,\eta})$ the geometric general
  fiber of $f_m$ (obtained by the base change $\Spec \ol F_m\to S_m$).  Since
  $\coeff(\ol B_{m,\eta})\subseteq \CC$ and $\vol (K_{\ol X_{m,\eta}}+\ol
  B_{m,\eta})=\vol (K_{X_{m,\eta}}+B_{m,\eta})=\const$, by Theorem~\ref{t-m} there is
  a fixed integer $r>0$ (independent of $m$) such that $r(K_{\ol X_{m,\eta}}+\ol
  B_{m,\eta})$ is very ample.

  As $X_{m,\eta}$ and the components of $B_{m,\eta}$ have bounded degree (cf.\
  Theorem~\ref{t-bb}), there exists, by Proposition \ref{p-ultra}, a pair
  $(X^\circ,B^\circ)$ defined over $F=[F_m]$ such that $(X^\circ,B^\circ)_{\rm
    int}=([X_{m,\eta}],[(B_{m,\eta})_{\rm red}])$.  Since $r(K_{\ol X_{m,\eta}}+\ol
  B_{m,\eta})$ is very ample and hence in particular Cartier, it follows that
  $\coeff(B_m)\subseteq \left\{\left.\dfrac ar \right\vert 0\leq a\leq r\right\}$ and
  so, after passing to an infinite subset of $\Z$, we may assume that actually
  $B^\circ_{\rm int}=[B_{m,\eta}]$.

  Let $k=[L_m]$ and note that it is algebraically closed of $\chara k=0$ by
  \cite[2.4.1, 2.4.2]{Schoutens10}.  By definition $S_{\rm int}=[S_m]$, so
  $K(S)= F=[F_m]$ by Theorem~\ref{l-ultra} and the construction of the
  functor $Z\mapsto Z_{\rm int}$. 
 Since $\chara k=0$, after a possible
  base change, resolving and taking the relative semi-log canonical model, one
  obtains a semistable family of semi-log canonical models $(X',B')\to S'$ with a
  finite separable morphism $S'\to S$.  Considering the corresponding internal
  objects $X'_{\rm int}=[X'_m]$, $B'_{\rm int}=[B'_m]$, and $S'_{\rm int}=[S'_m]$
  proves Theorem~\ref{t-ssr-two}.
\end{proof}

\begin{corollary}[\protect{=Theorem~\ref{t-ssr}}]
  \label{cor:stable-red-one}
  Fix a constant $\const\in\N$ and a DCC set $\CC \subset [0,1]\cap\Q$. Then there
  exists a number $p_0>0$ such that if $L$ is an algebraically closed field of
  characteristic $p>p_0$, $(X,B)$ a pair defined over $L$ such that $\dim X=3$,
  $f:X\to S=\Spec L[[t]]$ a projective morphism with connected fibers
  such that, $\coeff(B_{m,\eta })\subseteq \CC$, $(X_{m,\eta }, B_{m,\eta })$ is
  semi-log canonical, and $K_{X_{m,\eta }}+ B_{m,\eta }$ is ample with $(K_{X_{m,\eta
    }}+ B_{m,\eta })^2=\const$, then there exist a 
  separable finite morphism $S'\to S$, a projective morphism $f':X'\to S'$, and a
  pair $(X',B')$ such that considering the corresponding internal objects for an
  infinite subset $V\subseteq \Z$ and for each $m\in V$,
  $(X'_{m,s},B'_{m,s})$ is semi-log canonical and $K_{X'_{m,s}}+B'_{m,s}$ is ample
  for all $s\in S'$, and $(X'_{m,\eta '},B'_{m,\eta '})$ is isomorphic to $(X_{\eta
    '},B_{\eta '})$ where $\eta \in S_m$ and $\eta '\in S'_m$ are the generic points.
\end{corollary}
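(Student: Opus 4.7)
Plan: The strategy is proof by contradiction using Theorem~\ref{t-ssr-two}. Suppose the corollary fails; then for every $m\in\N$ we can choose an algebraically closed field $L_m$ with $\chara L_m = p_m > m$ (so $\lim p_m = \infty$) and a counterexample $(X_m,B_m) \to \Spec L_m[[t]]$: a pair and projective morphism satisfying all the hypotheses, but admitting no separable finite base change $S'_m\to \Spec L_m[[t]]$ together with a projective family $(X'_m,B'_m)\to S'_m$ whose geometric fibers are slc models with the prescribed volume $\const$. The obstacle to a direct application of Theorem~\ref{t-ssr-two} is that its hypothesis requires the base $S_m$ to be a smooth $1$-dimensional scheme over $L_m$, whereas here $S_m = \Spec L_m[[t]]$ is only a formal disk.

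The key step is therefore to algebraize these formal families, uniformly in $m$, using the effective bound furnished by Theorem~\ref{t-m}: there is a fixed integer $r>0$, depending only on $\const$ and $\CC$, such that $r(K_{X_{m,\eta}}+B_{m,\eta})$ is very ample and embeds the geometric generic fiber together with its boundary into a fixed projective space with bounded Hilbert polynomial. Consequently the formal family corresponds to a morphism $\Spec L_m[[t]] \to \mathscr H_{L_m}$ for a Hilbert-type scheme $\mathscr H$ of finite type over $\Spec \Z$ parametrizing such embedded pairs. Artin approximation applied to this functor then produces a smooth affine pointed curve $(T_m,p_m)$ over $L_m$ together with a projective family $(Y_m,D_m)\to T_m$ whose completion at $p_m$ is isomorphic to $(X_m,B_m)\to \Spec L_m[[t]]$. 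Since the slc condition on fibers and the ampleness of $K+B$ are open (constructible) in families, after shrinking $T_m$ around $p_m$ we may assume $(Y_m,D_m)\to T_m$ satisfies all hypotheses of Theorem~\ref{t-ssr-two}.

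Applying Theorem~\ref{t-ssr-two} to the sequence $\{(Y_m,D_m)\to T_m\}$ produces, for an infinite subset $V\subseteq \Z$, a separable finite morphism $T'_m\to T_m$ and a projective family $(Y'_m,D'_m)\to T'_m$ whose geometric fibers are slc with $K_{Y'_{m,s}}+D'_{m,s}$ ample and whose generic fiber matches that of $(Y_m,D_m)$ after base change. Restricting to the formal neighborhood of a preimage of $p_m$ in $T'_m$ then yields exactly the required separable finite cover $S'_m \to \Spec L_m[[t]]$ and projective slc family $(X'_m,B'_m)\to S'_m$, which contradicts our choice of $(X_m,B_m)$ as a counterexample for any such $m\in V$. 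Since $V$ is infinite and $p_m\to\infty$, this contradicts the assumed failure of the corollary for every $p_0$.

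The main obstacle is the algebraization step in the second paragraph: one must verify that Artin approximation can be applied to the functor parametrizing \emph{pairs} $(X,B)$ with the prescribed embedded slc structure (rather than mere subschemes), and that the resulting shrinkings of the base $T_m$ preserve all numerical data needed by Theorem~\ref{t-ssr-two}. Handled correctly via the bounded Hilbert polynomial coming from Theorem~\ref{t-m}, the reduction is clean; the remaining transfer between the formal disk and a Zariski neighborhood on a curve is routine.
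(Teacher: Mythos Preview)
Your overall contradiction framework matches the paper's, but the paper's proof is far shorter: it simply sets $S:=\Spec k[[t]]$ and applies Theorem~\ref{t-ssr-two} directly. Note that the hypothesis of Theorem~\ref{t-ssr-two} asks only that $S$ be a smooth $1$-dimensional scheme over $k$; it does not require $S$ to be of finite type, and the proof of Theorem~\ref{t-ssr-two} only uses the generic fibres $(X_{m,\eta},B_{m,\eta})$ over $F_m=K(S_m)$ together with the characteristic-zero construction over $S$. So the ``obstacle'' you identify is not actually present, and the detour through algebraization is unnecessary.

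More importantly, your algebraization step does not work as stated. Artin approximation, applied to the map $\Spec L_m[[t]]\to\mathscr H$, produces for each $N$ an \emph{\'etale-local} (or Henselian) solution agreeing with the given one modulo $t^{N}$; it does \emph{not} produce a pointed curve $(T_m,p_m)$ whose formal completion at $p_m$ is isomorphic to the original formal family. Indeed, a morphism $\Spec L_m[[t]]\to\mathscr H$ need not factor through any curve in $\mathscr H$ (think of $t\mapsto(t,\exp(t)-1)$ into $\mathbb A^2$ in characteristic $0$, or analogous transcendental arcs), so in general no such algebraization exists. Artin's \emph{algebraization} theorem would require the formal element to be versal, which you have no reason to expect here. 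Consequently, when you later restrict the output family $(Y'_m,D'_m)\to T'_m$ to the formal neighbourhood of a preimage of $p_m$, the generic fibre you obtain is that of $(Y_m,D_m)$, not of $(X_m,B_m)$; since these need only agree to finite order, you cannot conclude $(X'_{m,\eta'},B'_{m,\eta'})\simeq(X_{m,\eta},B_{m,\eta})\times_\eta\eta'$, and no contradiction with the choice of $(X_m,B_m)$ as a counterexample follows.
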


\begin{corollary}\label{cor:stable-red-two}
  Fix constants $\const, g\in\N$ and a DCC set $\CC \subset [0,1]\cap\Q$. Then there
  exists a number $p_0>0$ such that if $L$ is an algebraically closed field of
  characteristic $p>p_0$, $(X,B)$ a pair defined over $L$ such that $\dim X=3$,
  $f:X\to S$ a projective morphism with connected fibers, where $S$ is a smooth curve
  over $L$ whose geometric genus is at most $g$,
  such that,  $\coeff(B_{m,\eta })\subseteq \CC$, $(X_{m,\eta },
  B_{m,\eta })$ is log canonical, and $K_{X_{m,\eta }}+ B_{m,\eta }$ is ample with $(K_{X_{m,\eta }}+
  B_{m,\eta})^2=\const$, then there exist a 
  separable finite morphism $S'\to S$, a projective morphism $f':X'\to S'$, and a
  pair $(X',B')$ such that  considering the corresponding internal objects,  for an infinite
  subset $V\subseteq \Z$ and for each $m\in V$,  $(X'_{m,s},B'_{m,s})$ is semi-log canonical and
  $K_{X'_{m,s}}+B'_{m,s}$ is ample for all $s\in S'$, and $(X'_{m,\eta '},B'_{m,\eta '})$ is
  isomorphic to $(X_{\eta '},B_{\eta '})$ where $\eta \in S_m$ and $\eta '\in S'_m$ are the generic points.
\end{corollary}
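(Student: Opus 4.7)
The plan is to reduce Corollary~\ref{cor:stable-red-two} to Theorem~\ref{t-ssr-two} by an ultraproduct argument parallel to the derivation of Corollary~\ref{cor:stable-red-one}. I would argue by contradiction: if no such $p_0$ existed, then for each $m\in\Z$ there would exist an algebraically closed field $L_m$ of characteristic $p_m>0$ with $\lim p_m=\infty$, a pair $(X_m,B_m)$ over $L_m$ with $\dim X_m=3$, and a projective morphism $f_m\colon X_m\to S_m$ with connected fibers onto a smooth curve $S_m$ of geometric genus at most $g$, satisfying $\coeff(B_{m,\eta})\subseteq\CC$, $(X_{m,\eta},B_{m,\eta})$ slc with $K+B$ ample of square $\const$, yet admitting no separable finite base change producing the desired semistable family.

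Next I would use the genus bound to internalize the base curves. Let $\bar S_m$ be the smooth projective compactification of $S_m$; because $g(\bar S_m)\leq g$, a standard boundedness argument (via, e.g., a tricanonical embedding in the general-type range, with the low-genus cases handled by hand) places the $\bar S_m$ in a fixed projective space with uniformly bounded degree. By Proposition~\ref{p-ultra} the $\bar S_m$ internalize to a smooth projective curve $\bar S$ over $k=[L_m]$; the field $k$ has $\chara k=0$ by \cite[2.4.1, 2.4.2]{Schoutens10}. One then extends $f_m$ to a projective morphism $\bar f_m\colon \bar X_m\to\bar S_m$ by taking the flat projective closure of $X_m$ over $\bar S_m$, letting $\bar B_m$ be the Zariski closure of $B_m$; the generic fiber is unchanged, so the hypotheses of Theorem~\ref{t-ssr-two} are satisfied for the family $\bar f_m\colon \bar X_m\to \bar S_m$.

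With the setup in place, I invoke Theorem~\ref{t-ssr-two} with base $\bar S$. The generic fibers $(X_{m,\eta},B_{m,\eta})$ have bounded degree by Theorem~\ref{t-m}, so by Proposition~\ref{p-ultra} they assemble into a single slc pair $(X^\circ,B^\circ)$ defined over $F=K(\bar S)=[K(\bar S_m)]$. Theorem~\ref{t-ssr-two} then produces a finite separable morphism $S'\to\bar S$ and a projective semistable family of semi-log canonical models $(X',B')\to S'$ over $k$, together with corresponding internal data $(X'_m,B'_m)\to S'_m$ and $\sigma_m\colon S'_m\to\bar S_m$ realizing the semistable reduction over $L_m$ for $m$ in some infinite subset $V\subseteq\Z$. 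Pulling back along the open immersion $S_m\hookrightarrow\bar S_m$ yields a separable finite morphism $\sigma_m^{-1}(S_m)\to S_m$ and a semistable family that restricts to a model of $(X_m,B_m)\to S_m$, contradicting our failure assumption.

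The main obstacle is the internalization of the base curves, since the $S_m$ themselves need not form a uniformly bounded family: the cardinality of $\bar S_m\setminus S_m$ is not a priori bounded in $m$. The remedy is to pass to the compactifications $\bar S_m$, which do form a bounded family thanks to the genus hypothesis, and then restrict the resulting semistable reduction back over $S_m$. Once the compactification step is in place, the remainder of the argument is essentially a reprise of the proof of Corollary~\ref{cor:stable-red-one}.
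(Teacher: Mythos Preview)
Your proposal is correct and follows essentially the same route as the paper: argue by contradiction, use the genus bound to internalize the base curves via Proposition~\ref{p-ultra} into a smooth curve $S$ over $k=[L_m]$, and then invoke Theorem~\ref{t-ssr-two} to obtain the contradiction. The paper's proof is extremely terse (a few sentences covering both Corollaries~\ref{cor:stable-red-one} and~\ref{cor:stable-red-two} simultaneously) and simply cites Proposition~\ref{p-ultra} and \cite[3.7]{BHMM11} for the internalization of $S$; your explicit compactification $\bar S_m$ to secure degree boundedness, followed by restriction back to $S_m$, is a reasonable way to spell out the detail the paper leaves implicit, and the extension of $f_m$ over $\bar S_m$ is harmless since Theorem~\ref{t-ssr-two} only uses the generic fiber.
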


\begin{remark}
  Note that the situation of Corollary~\ref{cor:stable-red-two} arises for instance
  if $S$ is the ``reduction mod $p$'' of a fixed curve defined in characteristic zero.
\end{remark}

\begin{proof}[Proof of Corollaries~\ref{cor:stable-red-one} and
  \ref{cor:stable-red-two}]
  Let $L_m$ be a sequence of algebraically closed fields of characteristic $p_m>0$
  such that $\lim p_m=\infty$, $(X_m, B_m)$ a sequence of pairs and $X_m\to S_m$ a
  sequence of morphisms defined over $L_m$, where either $S_m=L_m[[t]]$ for each $m$
  or $S_m$ is a smooth curve over $L_m$ whose geometric genus is at most $g$ for each
  $m$.  Suppose that the conclusion of the appropriate corollary fails for each $m$.
  Let $k:=[L_m]$ and either let $S:=\Spec k[[t]]$ or let $S$ be the smooth curve
  provided by Proposition \ref{p-ultra} (cf.\ \cite[3.7]{BHMM11}). With these
  definitions the assumptions of Theorem~\ref{t-ssr-two} are satisfied and hence we
  obtain a contradiction. 
\end{proof}

\begin{theorem}[\protect{=Theorem~\ref{t-ssr-three}}]
  Fix a constant $\const\in \N$ and a DCC set $\mathscr C\subset [0,1]\cap \mathbb
  Q$. For each $m>0$ let $L_m$ be an algebraically closed field of characteristic
  $p_m>0$ such that $\lim p_m=\infty$ and let $k=[L_m]$.  Further let $(X_m,B_m)$ be
  a pair defined over $L_m$ such that $\dim X_m=3$, and let $f_m:X_m\to S_m$ be a
  projective morphism with connected fibers to a smooth curve. Assume that for each
  $m\in\Z$,  $\coeff(B_{m,\eta })\subseteq \CC$, $(X_{m,\eta }, B_{m,\eta })$ is
  semi-log canonical, and $K_{X_{m,\eta }}+ B_{m,\eta }$ is ample with $(K_{X_{m,\eta
    }}+ B_{m,\eta })^2=\const$ where $\eta$ is the generic point of $S_m$.

  Then 
  for all but finitely many $m$'s there exist a separable finite morphism $\sigma
  _m:S'_m\to S_m$, a projective morphism $X'_m\to S'_m$, and a pair $(X'_m,B'_m)$
  such that $(X'_{m,s},B'_{m,s})$ is semi-log canonical and $K_{X'_{m,s}}+B'_{m,s}$
  is ample for all $s\in S'_m$, and $(X'_{m,s},B'_{m,s})$ is isomorphic to
  $(X_{m,\sigma_m(s)},B_{m,\sigma_m(s)})$ for general $s\in S'_m$.
\end{theorem}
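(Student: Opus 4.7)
The plan is to apply Corollary~\ref{cor:stable-red-one} at each closed point of $S_m$ where $(X_m,B_m)\to S_m$ fails to already be a family of slc models with ample $K+B$, and then to glue the resulting local stable reductions along a suitable finite separable cover of $S_m$. By spreading out from the generic fiber, there is a dense open $U_m\subseteq S_m$ over which $(X_m,B_m)|_{U_m}\to U_m$ is already a flat family of slc models with $K+B$ relatively ample; its complement $F_m:=S_m\setminus U_m$ is then a finite set of closed points. Let $p_0$ be the constant produced by Corollary~\ref{cor:stable-red-one} for the data $(\const,\CC)$. Since $\lim p_m=\infty$, it suffices to treat the $m$ with $p_m>p_0$; fix such an $m$. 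For each $s\in F_m$, restricting to $\Spec\widehat{\OO}_{S_m,s}\simeq \Spec L_m[[t]]$ gives a family satisfying the hypotheses of Corollary~\ref{cor:stable-red-one}, yielding a finite separable cover $\widehat T_s\to \Spec\widehat{\OO}_{S_m,s}$ of some degree $e_s$ (necessarily coprime to $p_m$ by separability) together with a projective slc family over $\widehat T_s$ with $K+B$ ample on every fiber.

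Next, construct a single finite separable cover $\sigma_m:S'_m\to S_m$ that is \'etale over $U_m$ and tamely ramified with index divisible by $e_s$ at every point lying above each $s\in F_m$. Such a cover exists because the $e_s$ are coprime to $p_m$, so the tame fundamental group of $S_m\setminus F_m$ realizes any prescribed prime-to-$p_m$ cyclic inertia at each missing point. Pulling $(X_m,B_m)$ back along $\sigma_m$ produces an slc family over $\sigma_m^{-1}(U_m)$, while above each $s\in F_m$ the further base change of the local slc family from $\widehat T_s$ (slc-ness and ampleness of $K+B$ being preserved under separable base change) prescribes the desired model on the completion of $S'_m$ at every point of $\sigma_m^{-1}(s)$. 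Uniqueness of the slc canonical model with ample $K+B$ on fibers makes these local choices glue to a single projective family $(X'_m,B'_m)\to S'_m$. For any $s\in \sigma_m^{-1}(U_m)$, a dense open subset of $S'_m$, the fiber $(X'_{m,s},B'_{m,s})$ is unchanged under the construction and hence isomorphic to $(X_{m,\sigma_m(s)},B_{m,\sigma_m(s)})$, verifying the required condition for general $s\in S'_m$.

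The main obstacle is the passage from local to global, namely the construction of $\sigma_m$ and the gluing of the local stable families. Both steps are routine in characteristic zero, and they succeed here precisely because the local ramification indices $e_s$ are coprime to $p_m$ by the separability clause of Corollary~\ref{cor:stable-red-one}. On the base side this allows the required cover to be assembled from the tame fundamental group of $S_m\setminus F_m$; on the total-space side it allows base change to preserve the slc property and the ampleness of $K+B$ on fibers, so that the local slc stable models descend unambiguously to a global one.
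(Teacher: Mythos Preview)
Your overall strategy---local stable reduction at the bad points, then globalization along a suitable cover---matches the paper's, but two steps are not justified as written.

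First, the inference ``$e_s$ is coprime to $p_m$ by separability'' is false: a finite separable extension of $L_m((t))$ can be wildly ramified (e.g.\ Artin--Schreier covers), so Corollary~\ref{cor:stable-red-one}, which only asserts separability of $S'\to S$, does not give you tame local inertia. Without tameness your appeal to the tame fundamental group of $S_m\setminus F_m$ to produce $\sigma_m$ breaks down. The paper avoids this by \emph{not} quoting the corollary as a black box: it redoes the ultraproduct construction so that the cover $\widehat S'\to\widehat S$ is first built over $k=[L_m]$ in characteristic~$0$, with a \emph{fixed} ramification index $r$; since $p_m\to\infty$, one has $(r,p_m)=1$ for almost all $m$, and a global cover $S'_m\to S_m$ ramified to order $r$ at the bad point can then be exhibited.

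Second, ``uniqueness of the slc canonical model makes these local choices glue'' hides a genuine algebraization step: you have an algebraic family over $\sigma_m^{-1}(U_m)$ and a projective family over each $\Spec\widehat{\OO}_{S'_m,s'}$, agreeing over the punctured formal disk, and you must produce a single projective family over $S'_m$. The paper does not attempt to glue. Instead it takes a global log resolution $\widetilde f_m:\widetilde X_m\to S'_m$ of the pullback, and proves that the relative log canonical ring $\bigoplus_{q\ge 0}\widetilde f_{m,*}\OO_{\widetilde X_m}(q(K_{\widetilde X_m}+\widetilde B_m))$ is a finitely generated $\OO_{S'_m}$-algebra. Finite generation is checked after completion at the bad point, where a negativity-lemma comparison $\pi_m^*(K_{\widehat{\widetilde X}_m}+\widehat{\widetilde B}_m)=\varrho_m^*(K_{\widehat X'_m}+\widehat B'_m+\widehat X'_{m,s'_m})+G_m$ with $G_m\ge 0$ identifies this ring with the (manifestly finitely generated) canonical ring of the formal stable model $\widehat X'_m$. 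Taking $\Proj$ then yields the global family $X'_m\to S'_m$ directly, with no gluing required.
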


\begin{proof}
  Suppose that the conclusion of the theorem fails i.e. that it fails for infinitely
  many primes $p_m$. Passing to a subsequence, we may assume that the conclusion
  fails for every prime $p_m$ and we aim to find a contradiction.  Since the
  statement is local over the base, we may assume that $S_m={\rm Spec}(R_m)$ where
  $R_m$ is a DVR with closed point $s_m$.  Let $\widehat S_m$ be the formal
  neighborhood of $s_m\in S_m$ and $\widehat X_m$ be the formal neighborhood of
  $f_m^{-1}(s_m)\subset X_m$ with induced morphism $\widehat f _m:\widehat X_m\to
  \widehat S_m$.  We have $\widehat S_m= L_m[[t]]$ and arguing as in Corollary
  \ref{cor:stable-red-one} there is a finite cover $\widehat S'\to \widehat S=k[[t]]$
  and a family of semi-log canonical models $(\widehat X',\widehat B')\to \widehat
  S'$ which over the generic fiber is induced by $([\widehat X_{m,\eta}],[(\widehat
  B_{m,\eta})_{\rm red}])$.  Since $\widehat S'$ is a normal complete 1-dimensional
  DVR, we may assume that $\widehat S'\simeq k[[s]]$ and $\sigma:\widehat S'\to
  \widehat S$ is induced by the inclusion $\sigma ^* :k[[t]]\to k[[s]]$. Let $\sigma
  ^* (t)=s^rg(s)$ where $g(s) \in k[[s]]$ is a unit.  Considering the corresponding
  internal objects $\widehat X'_{\rm int}=[\widehat X'_m]$, $\widehat B'_{\rm
    int}=[\widehat B'_m]$ and $\widehat S'_{\rm int}=[\widehat S_m]$, then $\widehat
  S'_m=L_m[[s]] \to \widehat S_m=L_m[[t]]$ where $t=s^rg(s)$.

  \begin{claim}
    Let $S'_m \to S_m$ be a finite cover ramified at $s_m$ to order $r$ and $s'_m$ be
    the corresponding closed point on $S'_m$.  Then $\widehat S'_m$ is isomorphic to
    the completion of $S'_m$ along $s'_m$.
  \end{claim}
  
  \begin{proof} 
    Let $\gamma :\wt S'_m\to S_m$ be the morphism induced by the above finite cover
    where $\wt S'_m$ is the completion of $S'_m$ along $s'_m$, then $\wt S'_m={\rm
      Spec} (L_m[[s]])$ and $\gamma $ is determined by $\gamma ^* (t)=s^rh_m(s)$
    where we view $t\in R_m$ a local parameter of $S_m$ at $s_m$.  Let $g_m(s)\in
    L_m[[s]]$ be the elements corresponding to $g(s)\in k[[s]]$ and $\alpha_m
    (s),\beta _m(s)\in k[[s]]$ such that $(\alpha_m (s))^r=h_m(s)$ and $(\beta
    _m(s))^r=g_m(s)$, then $\alpha_m (s),\beta _m(s)\in k[[s]]$ are units.  Let $\tau
    _m:L_m[[s]]\to L_m[[s]]$ be a isomorphism such that $\tau _m(\alpha_m(s))=\beta_m(s)$,
    then $\tau _m$ induces the required isomorphism $\widehat S'_m\to \wt S'_m$.

  \end{proof}

  Consider now $\widetilde f _m:\widetilde X_m\to S'_m$ a log resolution of
  $X_m\times _{S_m}S'_m$ such that if $\widetilde B _m$ is the strict transform of
  $B_m$ plus the reduced exceptional divisor and the reduced fiber $(\widetilde f _m ^{-1}(s'_m))_{\rm red}$, then
  $\widetilde B _m$ has simple normal crossings support.  Let $\widehat {\widetilde
    X}_m$ be the completion of $\widetilde X _m$ along $\widetilde f
  ^{-1}_m(s'_m)$. Consider a common resolution $\pi_m:W_m\to \widehat {\widetilde
    X}_m$ and $\varrho_m:W_m\to \widehat X '_m$. We write
  \[
  \pi_m^*(K_{\widehat{\widetilde X}_m}+\widehat{\widetilde B}
  _m)=\varrho_m^*(K_{\widehat X'_m}+\widehat B'_m+\widehat X'_{m,s'_m})+G_m.
  \] 
  It is easy to see that $\pi_{m,*}(G_m)\geq 0$ and since $\varrho_m^*(K_{\widehat
    X'_m}+\widehat B'_m)-\pi_m^*(K_{\widehat{\widetilde X}_m}+\widehat{\widetilde B}
  _m)$ is $\pi _m$-nef, then by the negativity lemma, $G_m\geq 0$ (notice that restricting to the central fiber and applying the usual negativity lemma, we obtain that $G_m|_{(W_m)_{s'_m}}\geq 0$, and  hence $G_m\geq 0$
    as we are working over a formal neighborhood of $s'_m$).

  \begin{claim} 
    The ring $\bigoplus _{q\geq 0}\widetilde f _{m,*}\mathscr O _{\widetilde
      X_m}(q(K_{\widetilde X_m}+\widetilde B _m))$ is a finitely generated $\mathscr
    O _{S'_m}$-algebra.
  \end{claim}

  \begin{proof} Since the statement is triviallly true over the open subset
    $S'_m\setminus \{s'_m\}$, we may localize $S'_m$ at $s'_m$.  Since the natural
    functor from coherent sheaves over $S'_m$ to coherent sheaves over $\widehat S '
    _m$ is exact, it suffices to check the analogous statement for the $\mathscr O
    _{\widehat{S}'_m}$-algebra
    \[
    \bigoplus _{q\geq 0}\widehat{\widetilde f }_{m,*}\mathscr O _{\widehat{\widetilde
        X}_m}(q(K_{\widehat{\widetilde X}_m}+\widehat{\widetilde B} _m)).
    \] 
    By what we have observed above, this algebra is isomorphic to
    \[
    \bigoplus _{q\geq 0}\widehat{ f }'_{m,*}\mathscr O
    _{\widehat{X}'_m}(q(K_{\widehat{ X}'_m}+\widehat{ B}' _m+\widehat X
    _{m,s'_m}))\simeq \bigoplus _{q\geq 0}\widehat{ f }'_{m,*}\mathscr O
    _{\widehat{X}'_m}(q(K_{\widehat{ X}'_m}+\widehat{ B}' _m))
    \] 
    which is finitely generated (since $K_{\widehat{ X}'_m}+\widehat{ B}' _m$ is
    ample over $\widehat S'_m$.
  \end{proof}

  We now consider $X'_m={\rm Proj}(\bigoplus _{q\geq 0}\widetilde f _{m,*}\mathscr O
  _{\widetilde X_m}(q(K_{\widetilde X_m}+\widetilde B _m)))$.  Note that the special
  fiber $X'_{m,s'_m}$ is isomorphic to the special fiber of $\widehat X'_m\to
  \widehat S'_m$ and in particular it is reduced.  By construction $X'_m\to S'_m$ is
  a family of log canonical models
  and these log canonical models determine semi-log canonical models over
  $S'_m\setminus \{ s'_m \}$ and over a formal neigborhood of $s'_m\in S'_m$. Since
  these semi-log canonical models agree over the generic point of the formal
  neigborhood of $s'_m\in S'_m$, we obtain a semi-log canonical model over the whole
  of $S'_m$ (which is automatically projective over $S'_m$ since the relative log
  canonical divisor is relatively ample).  This is the required contradiction and the
  proof is complete.
\end{proof}

\end{subsection}

\end{document}